\theoremstyle{plain}
\newtheorem{theorem}{Theorem}[section]
\newtheorem{lemma}[theorem]{Lemma}
\newtheorem{proposition}[theorem]{Proposition}
\newtheorem{corollary}[theorem]{Corollary}
\newtheorem*{conjecture}{Conjecture}
\theoremstyle{definition}
\newtheorem{definition}[theorem]{Definition}
\newtheorem{example}[theorem]{Example}
\theoremstyle{remark}
\newtheorem{remark}[theorem]{Remark}
\newcommand{\R}{\mathbb{R}}
\newcommand{\C}{\mathbb{C}}
\newcommand{\F}{\mathbb{F}}
\newcommand{\h}{\mathfrak{h}}
\newcommand{\g}{\mathfrak{g}}
\renewcommand{\t}{\mathfrak{t}}
\newcommand{\Aut}{\mathrm{Aut}}
\newcommand{\End}{\mathrm{End}}
\newcommand{\Diff}{\mathrm{Diff}}
\newcommand{\Hom}{\mathrm{Hom}}
\newcommand{\SO}{\mathrm{SO}}
\newcommand{\so}{\mathrm{so}}
\newcommand{\Ad}{\mathrm{Ad}}
\newcommand{\ad}{\mathrm{ad}}
\newcommand{\A}{\mathcal{A}}
\newcommand{\G}{\mathcal{G}}
\newcommand{\M}{\mathcal{M}}
\newcommand{\X}{\mathfrak{X}}
\newcommand{\tr}{\mathrm{tr}}
\newcommand{\gcal}{\text{\fontfamily{pzc}\selectfont g}}
\renewcommand{\d}{\mathrm{d}}
\renewcommand{\L}{\mathcal{L}}
\renewcommand{\i}{\mathrm{i}}
\title{Polysymplectic Reduction and the Moduli Space of Flat Connections}
\date{}
\author{Casey Blacker}
\begin{document}
\maketitle

\begin{abstract}
	 A polysymplectic structure is a vector-valued symplectic form, that is, a closed nondegenerate $2$-form with values in a vector space. We first outline the polysymplectic Hamiltonian formalism with coefficients in a vector space $V$, we then apply this framework to show that the moduli space of flat connections on a principal bundle over a compact manifold $M$ is a polysymplectic reduction of the space of all connections by the action of the gauge group with respect to a natural polysymplectic structure with values in an infinite dimensional Banach space. As a consequence, the moduli space inherits a canonical $H^2(M)$-valued presymplectic structure.
	 
	 Along the way, we establish various properties of polysymplectic manifolds. For example, a Darboux-type theorem asserts that every $V$-symplectic manifold locally symplectically embeds in a standard polysymplectic manifold $\Hom(TQ,V)$. We also show that both the Arnold conjecture and the well-known convexity properties of the classical moment map fail to hold in the polysymplectic setting.
\end{abstract}

\let\thefootnote\relax
\footnote{\textit{Date.} June 5, 2019}
\footnote{\textit{2010 Mathematics Subject Classification.} 53D05, 53D20, 53D30, 70S15.}
\footnote{\textit{Key words and phrases.} polysymplectic geometry, Hamiltonian reduction, moment maps, gauge theory.}

\tableofcontents

\section{Introduction}

Polysymplectic geometry was introduced by G\"unther \cite{Gunther87,Gunther87a} to provide a Hamiltonian counterpart to the Lagrangian formalism of classical field theory. A \emph{polysymplectic structure} on $M$ is a nondegenerate $2$-form $\omega\in\Omega^2(M,\R^k)$ for some $k\geq 1$. By decomposing $\omega$ as the direct sum $\oplus_i\omega_i$ of $k$ closed $2$-forms $\omega_i\in\Omega^2(M)$, this is seen to be equivalent to the earlier \emph{$k$-symplectic} formalism of Awane \cite{Awane84,Awane92}, in which a $k$-symplectic structure $(\omega_1,\ldots,\omega_k)$ on $M$ consists of $k$ closed $2$-forms $\omega_i$ with $\cap_i\ker\omega_i=0$. Indeed, the terms $k$-symplectic and polysymplectic appear nearly interchangeably throughout the literature, though in the latter case a condition that $k+1$ divide $\dim M$ is sometimes imposed \cite{LucasVilarino15}. We also note the independent introduction of this material as the \emph{$k$-almost cotangent} formalism in \cite{LeonMendezSalgado88,LeonMendezSalgado93}.

In addition to its applications in classical field theory \cite{MunteanuReySalgado04,Norris01,FulpLawsonNorris96,McLeanNorris00,ReyRoman-RoySalgado05}, polysymplectic geometry is also the subject of intrinsic mathematical interest \cite{Cappelletti-MontanoBlaga08,Blaga10,LeonVilarino13,SoldatenkovVerbitsky15}. In this regard, we note the independent work of Norris \cite{Norris93,Norris94,Norris97} on the canonical polysymplectic structure of the frame bundle on a smooth manifold, as well as the more recent appearance of \emph{k-symplectic Lie systems} \cite{LucasVilarino15,LucasTobolskiVilarino15}. Additionally, steps have been taken to relate polysymplectic geometry to the related study of multisymplectic geometry\cite{ForgerGomes13}, which similarly arises in field-theoretic contexts \cite{CarinenaCrampinIbort91,MarsdenPatrickShkoller98,Nissenbaum17}. In its relation to field quantization, the polysymplectic approach known as the \emph{precanonical formalism} \cite{Kanatchikov98,Kanatchikov98a,Kanatchikov16,Kanatchikov15,Kanatchikov04,Kanatchikov01}.

Our approach was developed with the aim of furnishing a setting in which a gauge-theoretic observation of Atiyah and Bott, described below, may be generalized to a broader class of manifolds. This has required a greater degree of attention to the space of coefficients $V$. As such, we define a \emph{$V$-symplectic structure} on $M$ to be a nondegenerate $2$-form $\omega\in\Omega^2(M,V)$ with values in the vector space $V$. This terminology will prove useful as the spaces of coefficients $V$ often arises naturally and without preferred identifications with $\R^n$.

In Section \ref{sec:vv_symplectic_vector_spaces}, we outline the local theory of $V$-symplectic vector spaces, which furnish the local models for the $V$-symplectic manifolds to follow. A \emph{$V$-symplectic vector space} consists of a vector space $U$ with a nondegenerate alternating bilinear form $\omega:U\times U\to V$.  Most symplectic constructions extend in a straightforward manner to the $V$-symplectic setting, though their properties often differ in important ways. For example, one highly consequential distinction between the linear symplectic and polysymplectic formalisms is that, while the double orthogonal $A^{\omega\omega}$ of a subspace $A\subseteq U$ satisfies $A^{\omega\omega}=A$ in the symplectic setting, we are only guaranteed to have $A^{\omega\omega}\supseteq A$ in the polysymplectic context. The polysymplectic orthogonal will be a primary object of study.

We illustrate the local theory with four characteristic examples:

\begin{enumerate}[A.]
	\item The $\R^N$-symplectic vector space $(U,\oplus_i\omega_i)$ consisting of an even-dimensional vector space $U$ and the sum of $N$ classical symplectic forms $\omega_i$.
	\item The $\g$-symplectic vector space $\big(\g,[\,,]\big)$ consisting of a centerless Lie algebra $\g$ with its Lie bracket $[\,,]$.
	\item The $\R^3$-symplectic vector space $(\R^3,\times)$, where $\times$ is the cross product.
	\item The $V$-symplectic vector space $\big(U\oplus\Hom(U,V),\omega\big)$, for any vector spaces $U$ and $V$ of positive dimension, where $\omega(u+\phi,u'+\phi')=\phi'(u)-\phi(u')$ extends the canonical symplectic structure on $T^*Q$.
\end{enumerate}

We will show the last example to be universal in the sense that every $V$-symplectic vector space $(U,\omega)$ naturally polysymplectically embeds in $U\oplus\Hom(U,V)$.

In Section \ref{sec:v-hamiltonian formalism} we present the $V$-symplectic counterpart to the Hamiltonian formalism. As with the theory of $V$-symplectic vector spaces, the constructions of the classical Hamiltonian formalism find a natural $V$-valued equivalents, thought often with a greater variability of behavior. For example, when $\dim V\geq 2$, it is no longer the case that every function $f:M\to V$ is Hamiltonian, or that the reduction $(M_0,\omega_0)$ of a $V$-Hamiltonian system $(M,\omega,G,\mu)$ is necessarily a $V$-symplectic manifold, as the reduced $2$-form $\omega_0$ may be degenerate. In addition, we will show that the Arnold conjecture and the convexity properties of the classical moment map do not obtain in the polysymplectic context.

We obtain seven examples of $V$-symplectic manifolds:

\begin{enumerate}[A.]
	\item The $\R^N$-symplectic manifold $(M,\oplus_i\omega_i)$ consisting of an even-dimensional manifold $M$ equipped with the fiberwise sum $\oplus_i\omega_i\in\Omega^2(M,\R^N)$ of a collection of $N$ classical symplectic structures $\omega_i$ on $M$.
	\item The $\g$-symplectic manifold $(G,-\d\theta)$ comprising a Lie group $G$ with discrete center $Z(G)$ with $\g$-symplectic potential $\theta\in\Omega^1(G,\g)$ the Maurer-Cartan form on $G$.
	\item The $\R^3$-symplectic manifold $(Q_B,\omega_B)$ comprising the configuration space $Q_B$ of a rigid body $B$ under rotations about a fixed point in space, with polysymplectic structure $\omega_B$ induced by the principal homogeneous action of $\SO(3)$.
	\item The $V$-symplectic manifold $\big(\Hom(TQ,V),-\d\theta\big)$ for a manifold $Q$ and vector space $V$ of positive dimension, where $\theta$ is the canonical $V$-symplectic potential on $\Hom(TQ,V)$.
	\item The $V$-symplectic manifold $(TQ,\omega_L)$ associated to a $V$-mechanical system $(Q,L)$ with configuration space $Q$ and Lagrangian $L:TQ\to V$. Here $\omega_L$ is the pullback of the canonical $V$-symplectic structure above by the fiber derivative $\F L:TQ\to\Hom(TQ,V)$.
	\item The $\Omega^2(M)/B^2(M)$-symplectic manifold $(\A,\omega)$ comprising the space $\A$ of connections on a principal bundle $P$ over a base space $M$, with a polysymplectic form $\omega$ to be defined.
	\item Under suitable conditions, the polysymplectic reduction $(\M(P),\omega_0)$ of $(\A,\omega)$ inherits the structure of a $H^2(M)$-symplectic manifold.
\end{enumerate}

The first four are global extensions of the linear examples above, while the final two comprise the central topic of this paper. We will show that every $V$-symplectic manifold $(M,\omega)$ locally polysymplectically embeds in $\Hom(TM,V)$. The embedding is global precisely when $\omega$ is exact. The space $\Hom(TM,V)$, occasionally termed the \emph{polymomentum phase space}, provided the initial motivation for the polysymplectic formalism through its connection with classical field theory \cite{Gunther87}.

It is interesting to compare our list of examples to Kirillov's \cite{Kirillov04} three sources of classical symplectic manifolds:

\begin{enumerate}[i.]
	\item Algebraic submanifolds of the complex projective space $\C P^N$.
	\item The coadjoint orbits $\mathcal{O}\subseteq\g^*$ of a Lie group $G$.
	\item The momentum phase space $T^*Q$ of a smooth manifold $Q$.
\end{enumerate}

The coadjoint orbits $\mathcal{O}$ and the phase space $T^*Q$ find $V$-symplectic counterparts in the polysymplectic manifolds $(G,-\d\theta)$ and $\Hom(TQ,V)$. Though we do not investigate them here, we note that under suitable conditions the orbits of the coadjoint action $\Ad^*:G\curvearrowright\Hom(\g,V)$ posses natural $V$-symplectic forms \cite{Gunther87,MarreroRoman-RoySalgado15} which more directly extend the case of the classical coadjoint orbits in $\g^*$. If $G$ is centerless, then $(G,-\d\theta)$ is polysymplectomorphic to the orbit through the identity map $1_\g\in\Hom(\g,\g)$. On the other hand, there does not appear to be a natural candidate for a polysymplectic equivalent of $\C P^N$.

In Section \ref{sec:gauge_theory_in_higher_dimensions} we apply the $V$-symplectic framework in the setting of gauge theory. Atiyah and Bott observed \cite{AtiyahBott83} that the space of flat connections on a principal bundle over a closed surface is the symplectic reduction of the space of all connections by the action of the gauge group. The primary aim of this paper is to employ the polysymplectic formalism to extend this result beyond the surface case. This is the content of Theorem \ref{thm:reduction_of_the_space_of_connections}.

It is interesting to note that in the surface case the moduli space $\M(P)$ arises in the context of Jones-Witten topological quantum field theory \cite{Atiyah88,Witten89}. Though we do not do so here, it would be interesting to investigate similar connections with the material in this paper.

In light of the varied conventions that appear in the literature, we briefly comment on our notation and terminology. In this paper, a symplectic form $\omega$ is locally the negative exterior derivative $-\d\theta$ of a local potential $\theta$, the characterization of a Hamiltonian vector field $X_f$ involves a negative interior product $\d f = -\iota_{X_f}\omega$, and the induced vector field $\underline\xi\in\X(M)$ associated to $\xi\in\g$ for a Lie group action $G\curvearrowright M$ is equal to $\frac{\d}{\d t}\,e^{t\xi}\cdot x\,|_{t=0}$ at $x\in M$. By \emph{smooth} we mean $C^\infty$. All spaces are understood to be smooth unless otherwise noted.

\subsection{Results}

Let us briefly indicate those results which we believe may be of particular interest.

One realization of the Darboux theorem states that every symplectic manifold $(M,\omega)$ is locally symplectomorphic to $T^*L$, where $L\subseteq M$ is a Lagrangian submanifold of $M$, in such a way that identifies the zero section of $T^*L$ with $L$. It is well-known that there is no such theorem which holds in general for the polysymplectic setting. However, we do obtain the following much weaker result.

\setcounter{section}{3}
\setcounter{theorem}{5}
\begin{theorem}
	Every $V$-symplectic manifold $(M,\omega)$ locally polysymplectically embeds in $\Hom(TM,V)$.
\end{theorem}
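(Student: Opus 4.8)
The plan is to prove the theorem without any Moser- or Weinstein-type argument, relying instead on the single structural observation that a $V$-valued $1$-form on $M$ is the same datum as a section of the bundle $\pi\colon\Hom(TM,V)\to M$, together with the reproducing property of the canonical potential. Since the conclusion asks only for a local \emph{embedding} into the larger manifold $\Hom(TM,V)$, and not for a local polysymplectomorphism onto an open set (which in general cannot exist), a local primitive of $\omega$ will supply the embedding directly.

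First I would reduce to the case in which $\omega$ is exact. Fix $p\in M$ and choose a contractible open neighborhood $U\ni p$. Because $\omega$ is closed, the Poincaré lemma produces a primitive: writing $\omega=\sum_a\omega_a\otimes e_a$ in a basis $\{e_a\}$ of $V$, each closed form $\omega_a\in\Omega^2(U)$ may be written $\omega_a=-\d\alpha_a$ for some $\alpha_a\in\Omega^1(U)$, so that $\alpha:=\sum_a\alpha_a\otimes e_a\in\Omega^1(U,V)$ satisfies $\omega|_U=-\d\alpha$. (For infinite-dimensional $V$ the same conclusion holds whenever continuous $V$-valued functions may be integrated, as is the case for the Banach coefficients appearing later.)

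Next I would exhibit the embedding. Regard $\alpha$ as a section $s_\alpha\colon U\to\pi^{-1}(U)\subseteq\Hom(TM,V)$, $q\mapsto(q,\alpha_q)$, which, as a right inverse to the submersion $\pi$, is automatically a smooth embedding. It remains to verify that $s_\alpha$ is polysymplectic, i.e.\ that $s_\alpha^*(-\d\theta)=\omega|_U$, where $\theta$ is the canonical $V$-symplectic potential, characterized by $\theta_{(q,\phi)}(X)=\phi\big(\d\pi(X)\big)$. The crux is the tautological identity $s_\alpha^*\theta=\alpha$: since $\pi\circ s_\alpha=\mathrm{id}$ we have $\d\pi\circ\d s_\alpha=\mathrm{id}$, whence for $Y\in T_qU$ one computes $(s_\alpha^*\theta)_q(Y)=\theta_{(q,\alpha_q)}\big(\d s_\alpha(Y)\big)=\alpha_q\big(\d\pi(\d s_\alpha(Y))\big)=\alpha_q(Y)$. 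Applying $-\d$ and using naturality of the exterior derivative under pullback gives $s_\alpha^*(-\d\theta)=-\d(s_\alpha^*\theta)=-\d\alpha=\omega|_U$, as required.

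I expect no serious analytic obstacle here: the argument is soft, and its only delicate points are bookkeeping — confirming the reproducing identity $s_\alpha^*\theta=\alpha$ against the paper's sign conventions, and invoking the Poincaré lemma for $V$-valued forms. The genuine conceptual content is the recognition that the full classical Darboux statement is unavailable in this setting, so that the best one can do is embed into $\Hom(TM,V)$ rather than produce a local normal form. The same computation shows, moreover, that the embedding is global exactly when the primitive $\alpha$ can be chosen globally, that is, precisely when $\omega$ is exact.
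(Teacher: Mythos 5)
Your proposal is correct and follows essentially the same route as the paper's proof: choose a neighborhood on which $\omega$ admits a primitive $\alpha$ with $\omega=-\d\alpha$, regard $\alpha$ as a section of $\pi\colon\Hom(TM,V)\to M$, and use the tautological identity $s_\alpha^*\theta=\alpha$ to conclude that the section is a polysymplectic embedding. Your write-up merely makes explicit a few points the paper leaves implicit (the component-wise Poincar\'e lemma and the observation that a section of a submersion is automatically an embedding).
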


Recall the celebrated Arnold conjecture.

\begin{conjecture}[Arnold \cite{McDuffSalamon98}]
		A symplectomorphism that is generated by a time-dependent Hamiltonian vector field should have at least as many fixed points as a Morse function on the manifold must have critical points.
\end{conjecture}

In contrast to the symplectic case, a $V$-symplectic counterexample is readily furnished.

\setcounter{section}{3}
\setcounter{theorem}{20}
\begin{theorem}
	The Arnold conjecture fails in the $V$-symplectic setting.
\end{theorem}

The fundamental theorem of $V$-Hamiltonian systems is presented as follows.

\setcounter{section}{3}
\setcounter{theorem}{21}
\begin{theorem}[Vector-Valued Hamiltonian Reduction]
	Let $(M,\omega,G,\mu)$ be a $V$-Hamiltonian system and fix $\alpha\in\Hom(\g,V)$. If the stabilizer subgroup $G_\alpha$ of $\alpha$ under the coadjoint action is connected, and if $M_\alpha=\mu^{-1}(\alpha)/G_\alpha$ is smooth, then there is a unique $V$-valued $2$-form $\omega_\alpha\in\Omega^2(M_\alpha,V)$ such that
	\[
		\pi^*\omega_\alpha = i^*\omega,
	\]
	where $i:\mu^{-1}(\alpha)\hookrightarrow M$ is the inclusion and $\pi:\mu^{-1}(\alpha)\to M_\alpha$ is the projection. The form $\omega_\alpha$ is closed and is nondegenerate at $\pi x$ if and only if $\underline{\g_\alpha}_x = \underline\g_x^{\omega\omega}\cap\underline\g_x$.
\end{theorem}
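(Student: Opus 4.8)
The plan is to follow the standard Marsden–Weinstein reduction argument, adapted to the $V$-valued setting, with the crucial observation that all the classical diagram-chasing carries over formally while the nondegeneracy analysis is where the polysymplectic theory genuinely diverges.

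First I would establish the existence and uniqueness of $\omega_\alpha$. Since $G_\alpha$ is assumed connected and acts on the level set $\mu^{-1}(\alpha)$, and since $\pi:\mu^{-1}(\alpha)\to M_\alpha$ is a surjective submersion, it suffices to check that $i^*\omega$ is basic with respect to $\pi$, i.e.\ that it is both $G_\alpha$-invariant and horizontal (annihilated by the interior product with vertical vectors). Invariance follows from the $G$-equivariance of $\omega$ together with the fact that $G_\alpha$ preserves $\mu^{-1}(\alpha)$. For horizontality I would fix $x\in\mu^{-1}(\alpha)$ and $\xi\in\g_\alpha$, whose induced vector field $\underline\xi_x$ spans the vertical directions, and compute $\iota_{\underline\xi}\,i^*\omega$. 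Using the defining relation of the $V$-moment map $\mu$ (which by the earlier $V$-Hamiltonian formalism should read $\iota_{\underline\xi}\omega = \d\langle\mu,\xi\rangle$ or its sign convention), the pullback of this contraction to $\mu^{-1}(\alpha)$ vanishes precisely because $\langle\mu,\xi\rangle$ is constant on the level set when $\xi\in\g_\alpha$. Here one needs that $\xi\in\g_\alpha$ means $\mathrm{ad}^*_\xi\alpha=0$, so that the variation of $\langle\mu,\xi\rangle$ along tangent directions to the fiber is killed; this is the same infinitesimal equivariance computation as in the classical case, and $V$-valuedness changes nothing since the argument is componentwise. Uniqueness is immediate from surjectivity of $\d\pi$.

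Next I would verify closedness: since $\d\omega=0$ we have $\d(i^*\omega)=i^*\d\omega=0$, hence $\pi^*\d\omega_\alpha=\d\pi^*\omega_\alpha=\d i^*\omega=0$, and since $\pi^*$ is injective on forms (as $\pi$ is a submersion) we conclude $\d\omega_\alpha=0$. Again this is purely formal and insensitive to the coefficient space $V$.

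The main obstacle, and the genuinely polysymplectic part, is the nondegeneracy criterion $\underline{\g_\alpha}_x=\underline\g_x^{\omega\omega}\cap\underline\g_x$. The strategy is pointwise linear algebra on $U=T_x M$ with the $V$-symplectic form $\omega_x$. The tangent space to $\mu^{-1}(\alpha)$ at $x$ should be identified as the polysymplectic orthogonal $\underline\g_x^\omega$ of the orbit directions (the $V$-valued analogue of $T_x\mu^{-1}(\alpha)=(\mathfrak g\cdot x)^\omega$), and the tangent space to the reduced fiber $T_{\pi x}M_\alpha$ is then the quotient $\underline\g_x^\omega/\underline{\g_\alpha}_x$. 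The reduced form $\omega_\alpha$ on this quotient is nondegenerate exactly when the radical of the restriction of $\omega_x$ to $\underline\g_x^\omega$ coincides with $\underline{\g_\alpha}_x$. Computing that radical, the kernel of $\omega_x|_{\underline\g_x^\omega}$ is $\underline\g_x^\omega\cap(\underline\g_x^\omega)^\omega=\underline\g_x^\omega\cap\underline\g_x^{\omega\omega}$. Thus nondegeneracy of $\omega_\alpha$ at $\pi x$ is equivalent to $\underline{\g_\alpha}_x=\underline\g_x^{\omega\omega}\cap\underline\g_x^\omega$; and since one always has the orbit inclusion $\underline\g_x\subseteq\underline\g_x^{\omega\omega}$ but \emph{not} equality in the polysymplectic setting, the condition does not automatically hold—this is precisely where the failure $A^{\omega\omega}\supsetneq A$ noted in the linear theory becomes visible. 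I expect the delicate point to be reconciling the two expressions $\underline\g_x^{\omega\omega}\cap\underline\g_x$ (as stated) and $\underline\g_x^{\omega\omega}\cap\underline\g_x^\omega$ (from the radical computation); verifying their equality will rely on the general inclusions $\underline{\g_\alpha}_x\subseteq\underline\g_x^\omega$ and $\underline\g_x\subseteq\underline\g_x^{\omega\omega}$, reducing the stated criterion to the radical criterion. Establishing these orthogonality identities carefully, and confirming that the $V$-valued double-orthogonal behaves as required, is the step I would allocate the most care to.
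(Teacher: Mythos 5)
Your argument is essentially the paper's: it likewise identifies $T_x\mu^{-1}(\alpha)=\underline\g_x^\omega$ via the moment map identity $\langle\mu_*X,\xi\rangle=\omega(X,\underline\xi_x)$, applies the linear reduction theorem (Theorem \ref{thm:linear_vv_symplectic_reduction}) pointwise to $A=\underline\g_x$, and obtains uniqueness and closedness from the injectivity of $\pi^*$ exactly as you do. The one place you should change course is the step you flag as delicate. Your radical computation, giving nondegeneracy at $\pi x$ if and only if $\underline{\g_\alpha}_x=\underline\g_x^{\omega\omega}\cap\underline\g_x^{\omega}$, is correct: it is precisely the kernel $(A^{\omega\omega}\cap A^{\omega})/(A\cap A^{\omega})$ of Theorem \ref{thm:linear_vv_symplectic_reduction}. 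But do not try to reconcile this with the expression $\underline\g_x^{\omega\omega}\cap\underline\g_x$ appearing in the statement, because the two are not equal in general: since $\underline\g_x\subseteq\underline\g_x^{\omega\omega}$ always holds, the stated expression collapses to $\underline\g_x$, and the condition as literally written would say only that the orbit is isotropic --- which is inconsistent with the remark following the theorem that at $\alpha=0$ the criterion becomes $\underline\g_x=\underline\g_x^{\omega\omega}$. A superscript $\omega$ has simply been dropped in the statement (and again in the paper's own proof, which records the kernel as $(\underline\g_x^{\omega\omega}\cap\underline\g_x)/\underline{\g_\alpha}_x$); your version is the correct criterion, and with it your plan goes through as written.
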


It should be noted that this result has been obtained previously, in a slightly different form, in \cite{MarreroRoman-RoySalgado15}. We achieve the following result on the smooth structure of the reduced space.

\setcounter{section}{3}
\setcounter{theorem}{27}
\begin{theorem}
	Let $(M,\omega,G,\mu)$ be a $V$-Hamiltonian system with compact Lie group $G$, and suppose that $\alpha\in\Hom(\g,V)$ is a regular value of the moment map $\mu:M\to\Hom(\g,V)$. Then the reduced space $M_\alpha$ has at most orbifold singularities.
\end{theorem}

The main result of this paper concerns the reduction of the space of connections by the action of the gauge group.

\setcounter{section}{4}
\setcounter{theorem}{11}
\begin{theorem}
	Let $M$ be a compact manifold of dimension at least $3$, $G$ a compact matrix Lie group, $P$ a $G$-principal bundle on $M$ with connected gauge group $\G$, $\A$ the space of connections on $P$, and $k>\frac{1}{2}\dim M+1$ a fixed integer. Denote the the $W^{k,2}$ Sobolev completion of $\A$ by $\A_k$, and likewise for $\G$, $\gcal$, and $\Omega^*$, and write $\tilde\Omega^2(M)$ and $\tilde{B}^2(M)$ for the spaces of $C^1$ forms and coboundaries on $M$, respectively. The function
	\[
		\mu: \A_k \to	 \Hom\big(\gcal_{k+1},\tilde\Omega^2(M)/\tilde{B}^2(M)\big),
	\]
	given by
	\[
		\mu(A)(f) = \langle F_A\wedge f\rangle_{\tilde\Omega^2/\tilde{B}^2},	\hspace{.6cm} f\in\Omega_{k+1}^0(M,\ad P)\cong\gcal_{k+1},
	\]
	is a moment map for the action of $\G_{k+1}$ on $\A_k$ with respect to the polysymplectic structure $\omega\in\Omega^2\big(\A,\tilde\Omega^2(M)/\tilde{B}^2(M)\big)$ defined by
	\[
		\omega(\alpha,\beta) = \langle\alpha\wedge\beta\rangle_{\tilde\Omega^2/\tilde{B}^2},\hspace{.8cm}	\alpha,\beta\in \Omega_k^1(M,\ad P)\cong T_A\A_k.
	\]
	The reduced space $(\A_k)_0$ is the moduli space of flat connections $\M_k=F^{-1}(0)/\G_{k+1}$ on $P$. On the smooth points of $\M_k$, the reduced $2$-form $\omega_0$ takes values in the second cohomology $H^2(M)$.
\end{theorem}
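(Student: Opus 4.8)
The plan is to verify the three assertions in sequence: that $\omega$ is a genuine polysymplectic form on $\A_k$, that $\mu$ is a moment map for the gauge action, and that the reduced object is the moduli space of flat connections carrying an $H^2(M)$-valued form. The key structural fact throughout is that the space of connections $\A_k$ is an affine space modeled on $\Omega^1_k(M,\ad P)$, so the tangent space $T_A\A_k$ is canonically identified with $\Omega^1_k(M,\ad P)$ at every point. This makes $\omega$ a constant coefficient form, so its closedness is immediate, and the wedge-pairing $\langle\alpha\wedge\beta\rangle$ into $\tilde\Omega^2(M)/\tilde B^2(M)$ is manifestly alternating and bilinear; the only substantive point is nondegeneracy, which I would extract from the nondegeneracy of the pointwise pairing on $\ad P$ (coming from the $\Ad$-invariant inner product furnished by $G$ being a compact matrix group) together with the fact that passing to the quotient $\tilde\Omega^2/\tilde B^2$ does not kill the relevant classes.

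Next I would establish the moment map property. First I would compute the induced vector field on $\A_k$ for $f\in\gcal_{k+1}\cong\Omega^0_{k+1}(M,\ad P)$: the infinitesimal gauge action sends $A$ to the covariant derivative $\d_A f$, so $\underline f_A = \d_A f \in T_A\A_k$. The required identity is $\d\langle\mu,f\rangle = -\iota_{\underline f}\,\omega$ (following the sign conventions fixed in the introduction), together with equivariance of $\mu$. Since $\mu(A)(f)=\langle F_A\wedge f\rangle$, I would differentiate $A\mapsto F_A$ in the direction $\alpha\in T_A\A_k$, using the Bianchi-type formula $\tfrac{\d}{\d t}F_{A+t\alpha}|_{t=0}=\d_A\alpha$, and then apply integration by parts over $M$ (here the passage to $\tilde\Omega^2/\tilde B^2$ is what absorbs the exact boundary terms $\d\langle\alpha\wedge f\rangle$), yielding exactly the pairing $\langle\alpha\wedge\d_A f\rangle$ up to sign. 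Equivariance follows from the naturality of the curvature $F_{g\cdot A}=\Ad_g F_A$ and the $\Ad$-invariance of $\langle\,\cdot\wedge\cdot\,\rangle$.

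With $\mu$ a moment map, the identification of the reduced space proceeds by applying the Vector-Valued Hamiltonian Reduction theorem (Theorem 3.22) at the value $\alpha=0$. The zero level set $\mu^{-1}(0)$ consists precisely of connections $A$ for which $\langle F_A\wedge f\rangle_{\tilde\Omega^2/\tilde B^2}$ vanishes for all $f$; I would argue that the nondegeneracy of the pairing forces $F_A$ to be cohomologically trivial, and then invoke the hypothesis $k>\tfrac12\dim M+1$ (which guarantees the needed Sobolev multiplication and regularity) to upgrade this to $F_A=0$, so that $\mu^{-1}(0)=F^{-1}(0)$ is the space of flat connections. Since $G_0=\G_{k+1}$ is all of the gauge group (the coadjoint stabilizer of $0$), the reduced space is $\M_k=F^{-1}(0)/\G_{k+1}$, as claimed.

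Finally I would pin down the coefficient space of the reduced form. On the smooth locus, Theorem 3.22 produces a closed form $\omega_0$ with $\pi^*\omega_0=i^*\omega$, still a priori valued in $\tilde\Omega^2/\tilde B^2$. The point is that at a flat connection the relevant tangent vectors are $\d_A$-closed forms, so the wedge pairing $\langle\alpha\wedge\beta\rangle$ of two such representatives is itself a closed $2$-form on $M$, and modding by $\tilde B^2(M)$ sends it to its de Rham class in $H^2(M)$. I would check that this refinement is well-defined on the quotient, i.e.\ independent of the choice of representatives within a gauge orbit, using that the gauge directions $\d_A f$ pair with closed forms to give exact forms. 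The step I expect to be the main obstacle is precisely this last reduction of the coefficients together with the regularity argument that $\mu^{-1}(0)=F^{-1}(0)$: both require careful Sobolev bookkeeping and a clean verification that the de Rham class of $\langle\alpha\wedge\beta\rangle$ is the genuine obstruction, and this is where the affine-space and Hodge-theoretic structure of $\A_k$ must be used most delicately.
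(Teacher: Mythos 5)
Your overall route is the same as the paper's: identify $T_A\A_k\cong\Omega^1_k(M,\ad P)$ via the affine structure, compute $\underline f_A=\d_Af$ and $\tfrac{\d}{\d t}F_{A+t\alpha}|_{t=0}=\d_A\alpha$, use the Leibniz identity $\d\langle\alpha\wedge f\rangle=\langle\d_A\alpha\wedge f\rangle-\langle\alpha\wedge\d_Af\rangle$ so that the exact term dies in $\tilde\Omega^2/\tilde B^2$, invoke the reduction theorem at level $0$, and observe that tangent vectors to $\mu^{-1}(0)$ are $\d_A$-closed so the reduced form lands in $\tilde Z^2/\tilde B^2=H^2(M)$. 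Those parts are fine (modulo the Banach-space bookkeeping you defer: the paper isolates the quadratic term $\tfrac12[\alpha\wedge\alpha]$ of $F_{A+\alpha}$ and checks it is tangent to $0$ so that $\mu$ is genuinely Fr\'echet differentiable with $\d\mu_A(\alpha)(f)=\langle\d_A\alpha\wedge f\rangle_{\tilde\Omega^2/\tilde B^2}$).

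There is, however, a genuine gap in your identification $\mu^{-1}(0)=F^{-1}(0)$. You propose that nondegeneracy of the pairing forces $F_A$ to be ``cohomologically trivial'' and that the Sobolev hypothesis $k>\tfrac12\dim M+1$ then upgrades this to $F_A=0$. Neither half of this works: exactness of a form is not improved to vanishing by any amount of regularity (on $S^3$ every $2$-form is exact), and the Sobolev index plays no role here beyond making the objects $C^1$. The condition $\mu(A)=0$ says only that the real-valued $2$-form $\langle F_A\wedge f\rangle$ lies in $\tilde B^2(M)$ for every $f\in\Omega^0_{k+1}(M,\ad P)$, and a single exact form carries no pointwise information. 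The correct mechanism is to exploit the full family of test sections: replacing $f$ by $gf$ for arbitrary $g\in C^\infty(M)$ gives that $g\,\langle F_A\wedge f\rangle$ is closed for all $g$, hence $\d g\wedge\langle F_A\wedge f\rangle=0$ for all $g$; the pointwise linear-algebra fact that wedging a nonzero $2$-covector with all $1$-covectors cannot vanish when $\dim M\geq 3$ (Lemma \ref{lem:nondegeneracy_of_wedge_in_higher_than_three_dimensions}) then forces $\langle F_A\wedge f\rangle=0$ pointwise, and fiberwise nondegeneracy of $\langle\,,\rangle_{\ad P}$ gives $F_A=0$. This is where the hypothesis $\dim M\geq 3$ enters, not the Sobolev exponent. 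The same localization-and-test-function argument is what underlies the nondegeneracy of $\omega$ itself (Theorem \ref{thm:canonical_symplectic_structure_on_general_space_of_connections}), which you also gesture at only loosely; as stated, your claim that passing to $\tilde\Omega^2/\tilde B^2$ ``does not kill the relevant classes'' is precisely the point that needs Lemma \ref{lem:nondegeneracy_of_wedge_in_higher_than_three_dimensions} and Proposition \ref{prop:infinite_dimensional_vector_valued_symplectic_vector_space_toy_model}.
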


\setcounter{section}{1}
\setcounter{theorem}{0}

\subsubsection*{Acknowledgements.}
	This work is based on the PhD thesis of the author \cite{Blacker18}. The author would like to thank his advisor, Xianzhe Dai, and acknowledge the fellowship support of the NSF RTG in Geometry and Topology at UCSB.

\section{The Local Theory}\label{sec:vv_symplectic_vector_spaces}

In this section, we introduce the key entity of $V$-symplectic geometry: the $V$-symplectic vector space. Our treatment begins with the basic definitions and culminates in a linear $V$-symplectic reduction theorem.

Throughout this exposition $U$ and $V$ will denote real vector spaces of differing roles. The space $U$ will represent the underlying space on which a vector-valued form $\omega$ is defined, while $V$ represents the space of coefficients. This notation is consistent with that of the following sections, where we consider manifolds modeled on $U$ and vector-valued forms with coefficients in $V$.

\subsection{$V$-Symplectic Vector Spaces}

We begin with the fundamental construction of this section.

\begin{definition}
	Let $U$ and $V$ be vector spaces. A \emph{$V$-symplectic structure} $\omega:U\times U\to V$ on $U$ is a $V$-valued alternating bilinear form which is nondegenerate in the sense that $\iota_u\omega = 0$ for $u\in U$ only if $u=0$. We call the pair $(U,\omega)$ a \emph{$V$-symplectic vector space}.
\end{definition}

Thus, a polysymplectic vector space $(U,\omega)$ is a $V$-symplectic vector space for some $V$. As in the symplectic case, there is a correspondence $\omega\mapsto\iota\omega$ between the $V$-symplectic structures on $U$ and the injective linear maps from $U$ to $\Hom(U,V)$.

\begin{example}
\begin{enumerate}[A.]
	\item Every classical symplectic vector space is an $\R$-symplectic vector space. More generally, for a family $(\omega_i)_{i\leq N}$ of symplectic structures on the even-dimensional vector space $U$, we define the $\R^N$-symplectic form $\oplus_i\omega_i:U\times U\to \R^N$ by
	\[
		\oplus_i\omega_i(u,u') = \oplus_i\big[\omega_i(u,u')\big].
	\]
	\item Recall that the \emph{center} $\mathfrak{z}$ of a Lie algebra $\big(\g,[\,,]\big)$ is the ideal
	\[
		\mathfrak{z} = \big\{ \xi\in \g\,\big|\, \ad_{\xi} = 0 \big\},
	\]
	and that $\g$ is said to be \emph{centerless} if $\mathfrak{z}=0$. For such a Lie algebra $\g$, the bracket $[\,,]$ is nondegenerate and thus constitutes a $\g$-symplectic structure on $\g$. Since the center $\mathfrak{z}$ is an abelian ideal of $\g$, this class of examples includes every semisimple Lie algebra.
	
	\item As a more concrete instance of part b., corresponding to $\g=\so(3)$, we consider the cross product, $\times$, as an $\R^3$-symplectic structure on $\R^3$. For nondegeneracy, we note that for any $X\in \R^3\backslash\{0\}$ and any orthogonal $Y\in\R^3\backslash\{0\}$, we have $\|X\times Y\| = \|X\|\cdot\|Y\|>0$.
	\item For vector spaces $U$ and $V$, of strictly positive dimension, the assignment
	\[
		\omega(u+\phi,u'+\phi') = \phi'(u) - \phi(u')
	\]
	defines a $V$-symplectic structure on $U\oplus\Hom(U,V)$. To see that $\omega$ is nondegenerate, let $u+\phi\in U\oplus\Hom(U,V)$ be any nonzero element, choose $u'\in U$ and $\phi'\in\Hom(U,V)$ so that precisely one of $\phi(u')$ and $\phi'(u)$ is nonzero, and observe that
	\[
		\omega(u+\phi,u'+\phi') = \phi'(u)-\phi(u') \neq 0.
	\]
	As a notational convenience, we will identify $U$ and $\Hom(U,V)$ with their images in $U\oplus\Hom(U,V)$.
\end{enumerate}
\end{example}


\begin{definition}
	Let $(U,\omega)$ and $(U',\omega')$ be $V$ and $V'$-symplectic vector spaces, respectively. A \emph{weak morphism} of polysymplectic vector spaces,
	\[
		f : (U,\omega) \to (U',\omega'),
	\]
	consists of a pair of linear maps
	\begin{align*}
		f_0: U	&\to		U'		\\
		f_1: V	&\to		V',
	\end{align*}
	such that $f_0^*\omega' = f_1\circ\omega$.
\end{definition}

We distinguish two classes of weak morphisms,
\begin{enumerate}[i.]
	\item If $f_1=1_V$ then we call $f$ a \emph{morphism} of $V$-symplectic vector spaces, and we identify $f$ with $f_0:U\to U'$.
	\item If $f_0=1_U$ then we call $f$ a \emph{morphism of coefficients}, and we identify $f$ with $f_1:V\to V'$. If $f_1:V\to V'$ is injective (resp.\ surjective) then we say that $f$ is an \emph{extension} (resp.\ \emph{reduction}) of coefficients.
\end{enumerate}

\begin{example}
\begin{enumerate}[A.]
	\item Let $(U,\omega)$ and $(U',\omega')$ be classical symplectic vector spaces. The space of classical linear symplectic maps from $(U,\omega)$ to $(U',\omega')$ is the space of morphisms of $\R$-symplectic vector spaces from $(U,\omega)$ to $(U'\omega')$. A map $f:(U,\omega)\to(U',\omega')$ is a weak morphism precisely when $f^*\omega' = \lambda\omega$ for some $\lambda\in\R$.
	
	The classical symplectic vector space $(U,\omega_i)$ is obtained by reducing the coefficients of $(U,\oplus_i \omega_i)$ from $\R^N$ to $\R$. The map $f:(M,\oplus_i\omega_i)\to(M',\oplus_i\omega_i')$ is a morphism if and only if it is a classical symplectic map from $(M,\omega_i)$ to $(M,\omega_i')$ for each $i\leq N$.
	\item Every Lie algebra morphism $f:\g\to\h$ is a weak morphism with $f_0=f_1=f$.
	\item Every rotation about the origin is a weak automorphism of $(\R^3,\times)$. The space of automorphisms is trivial.
	\item Every linear automorphism $\bar{f}:U\to U$ extends to a polysymplectic automorphism
	\begin{align*}
		f:	U\oplus\Hom(U,V)		&\to		U\oplus\Hom(U,V)				\\
			u+\phi				&\mapsto		\bar{f}u + \bar{f}_*\phi.
	\end{align*}
	 In particular, the $V$-symplectic structure on $U\oplus\Hom(U,V)$ is invariant under the induced action of $\Aut\,U$.
\end{enumerate}
\end{example}

\begin{proposition}
	If $(U,\bar\omega)$ is a $V$-symplectic vector space, then the map
	\begin{align*}
		i:	U	&\hookrightarrow		U\oplus\Hom(U,V)								\\
			u	&\mapsto				u-\textstyle\frac{1}{2}\iota_u\bar\omega
	\end{align*}
	is an inclusion of $V$-symplectic vector spaces. That is, the graph of $-\frac{1}{2}\iota\omega:U\to\Hom(U,V)$ is isomorphic to $(U,\omega)$
\end{proposition}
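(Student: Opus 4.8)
The plan is to verify directly that $i$ is an injective morphism of $V$-symplectic vector spaces, that is, that $i$ is injective and satisfies $i^*\omega = \bar\omega$, where $\omega$ denotes the canonical $V$-symplectic structure on $U\oplus\Hom(U,V)$ given in Example D by $\omega(u+\phi,u'+\phi') = \phi'(u)-\phi(u')$. Injectivity is immediate: since the $U$-component of $i(u) = u - \textstyle\frac{1}{2}\iota_u\bar\omega$ is $u$ itself, the map $i$ is the graph of the linear map $-\frac{1}{2}\iota\bar\omega : U\to\Hom(U,V)$ and is therefore injective. (One could equally invoke the fact that any morphism of $V$-symplectic vector spaces is automatically injective by nondegeneracy of the form on the domain.)

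The substance of the argument is the pullback computation. Writing $\phi_u := -\frac{1}{2}\iota_u\bar\omega$, so that $\phi_u(u') = -\frac{1}{2}\bar\omega(u,u')$ for all $u'\in U$, I would apply the defining formula for $\omega$ to $i(u) = u + \phi_u$ and $i(u') = u' + \phi_{u'}$, obtaining
\[
	i^*\omega(u,u') = \omega\big(i(u),i(u')\big) = \phi_{u'}(u) - \phi_u(u').
\]
The key step is to observe that the two resulting terms \emph{add} rather than cancel, owing to the antisymmetry of $\bar\omega$. Indeed, $\phi_{u'}(u) = -\frac{1}{2}\bar\omega(u',u) = \frac{1}{2}\bar\omega(u,u')$ while $\phi_u(u') = -\frac{1}{2}\bar\omega(u,u')$, so that
\[
	i^*\omega(u,u') = \textstyle\frac{1}{2}\bar\omega(u,u') + \frac{1}{2}\bar\omega(u,u') = \bar\omega(u,u').
\]

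Hence $i^*\omega = \bar\omega$, and $i$ is a morphism of $V$-symplectic vector spaces, which together with injectivity establishes that $(U,\bar\omega)$ embeds as the graph of $-\frac{1}{2}\iota\bar\omega$. There is no genuine obstacle here; the only point demanding care is the interplay between the coefficient $-\frac{1}{2}$ and the sign produced by the alternating property of $\bar\omega$. It is precisely this interplay that singles out the value $-\frac{1}{2}$, since any other coefficient $\lambda$ would yield $i^*\omega = -2\lambda\,\bar\omega$ rather than $\bar\omega$ itself, giving a morphism only up to rescaling of the form.
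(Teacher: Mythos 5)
Your proof is correct and follows essentially the same route as the paper: a direct computation of $i^*\omega(u,u')$ using the canonical form on $U\oplus\Hom(U,V)$ and the antisymmetry of $\bar\omega$, with injectivity noted as immediate. The only cosmetic difference is that the paper clears the factor $\frac{1}{2}$ by computing $2i^*\omega$, whereas you carry it through directly.
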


\begin{proof}
	Denote by $\omega$ the canonical $V$-symplectic form on $U\oplus\Hom(U,V)$. For any $u,u'\in U$, a direct computation yields
	\begin{align*}
		2i^*\omega(u,u')
			&=	\omega(u-\iota_u\bar\omega,u'-\iota_{u'}\bar\omega)		\\
			&=	-\bar\omega(u',u) + \bar\omega(u,u')						\\
			&=	2\bar\omega(u,u').
	\end{align*}
	The result follows as the injectivity of $i$ is clear.
\end{proof}

We will show in Theorem \ref{thm:phase_space_local_embedding} that this is the local manifestation of a global phenomenon.

Looking ahead to Section \ref{sec:gauge_theory_in_higher_dimensions}, we consider the following infinite-dimensional example.

\begin{example}
	Let $\Sigma$ be a closed orientable surface. In Subsection \ref{subsec:cohomology_as_symplectic_reduction} we consider the following three formal $V$-symplectic structures on the vector space $\Omega^1(\Sigma)$,
	\begin{center}
	\begin{tabular}{lcl}
		$\;\omega(\alpha,\beta)$			&$\in$&$V$						\\ \hline
										&								\\ [-.35cm]
		$\phantom{\int_\Sigma}\alpha\wedge\beta$					&&$\Omega^2(\Sigma)$				\\
		$\phantom{\int_\Sigma}\alpha\wedge\beta+B^2(\Sigma)$		&&$\Omega^2(\Sigma)/B^2(\Sigma)$	\\
		$\int_\Sigma\alpha\wedge\beta$	&&$\R$
	\end{tabular}
	\end{center}
	where $B^2(\Sigma)$ denotes the image of the exterior derivative $\d:\Omega^1(\Sigma)\to\Omega^2(\Sigma)$. The spaces $\big(\Omega^1(\Sigma),\omega\big)$ are related as follows.
	\begin{center}
	\begin{tikzcd}[every arrow/.append style={shift left}]
		\Omega^2(\Sigma,\R)\ar[d, "\text{\sl{reduction}}" right]															\\
		\Omega^2(\Sigma,\R)/B^2(\Sigma,\R)\ar[d, "\text{\sl{reduction}}" right]\ar[u, "\text{\sl{extension}}" left]		\\
		\R\ar[u, "\text{\sl{extension}}" left]
	\end{tikzcd}
	\end{center}
	The first space $\Omega^2(\Sigma)$ is the codomain of the natural polysymplectic structure on $\Omega^1(\Sigma)$ induced by the wedge product, while the third $\R$ corresponds to the classical symplectic structure defined by Atiyah and Bott \cite{AtiyahBott83}. It turns out that it is the intermediate space $\Omega^2(\Sigma)/B^2(\Sigma)$ that will prove most suitable for our purposes.
\end{example}

\begin{definition}
	A $V$-symplectic vector space $(U,\omega)$ is said to be \emph{irreducible} if every reduction of coefficients is an isomorphism of $V$-symplectic vector spaces.
\end{definition}

\begin{proposition}
	The $V$-symplectic space $U\oplus\Hom(U,V)$ is irreducible.
\end{proposition}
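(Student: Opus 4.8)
The plan is to unwind the definition of a reduction of coefficients and then argue by contraposition. Write $W = U\oplus\Hom(U,V)$ and let $\omega$ denote its canonical $V$-symplectic form. A reduction of coefficients out of $(W,\omega)$ is a surjective linear map $f_1:V\to V'$ whose composite $\omega' = f_1\circ\omega$ is again nondegenerate, so that $(W,\omega')$ is a genuine $V'$-symplectic vector space. Irreducibility asserts that any such $f_1$ is in fact an isomorphism, and since $f_1$ is already surjective it suffices to show it is injective. I would therefore assume $\ker f_1\neq 0$ and exhibit a nonzero element of $W$ lying in the kernel of $\iota\omega'$, contradicting the nondegeneracy of $\omega'$.

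To build the degenerate direction, fix a nonzero $v_0\in\ker f_1$ and, using $\dim U>0$, a nonzero functional $\lambda\in U^*$. Set $\phi = \lambda\otimes v_0\in\Hom(U,V)$, that is, $\phi(u') = \lambda(u')\,v_0$; this is a nonzero element of $W$ whose image lies entirely in $\ker f_1$. The claim is that $\iota_\phi\omega' = 0$. Indeed, for any $u'+\phi'\in W$ the defining formula gives $\omega(\phi,u'+\phi') = \phi'(0) - \phi(u') = -\phi(u')$, and hence
\[
	\omega'(\phi,u'+\phi') = f_1\big(-\phi(u')\big) = -\lambda(u')\,f_1(v_0) = 0,
\]
since $v_0\in\ker f_1$. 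Thus the nonzero vector $\phi$ contracts $\omega'$ to zero, so $\omega'$ is degenerate, and $f_1$ fails to be a reduction of coefficients unless $\ker f_1 = 0$.

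There is essentially no obstacle here: the argument hinges only on the observation that any vector in $\ker f_1$ can be realized as the image of a rank-one homomorphism $U\to V$, which is possible precisely because $\dim U\geq 1$. More conceptually, this is why $\Hom(U,V)$ is the ``rigid'' summand of $W$: contracting with a pure homomorphism $\phi$ returns values in the image of $\phi$, so as soon as that image meets $\ker f_1$ nontrivially the reduced form degenerates. It is worth noting that the analogous attempt to degenerate $\omega'$ using an element $u+\phi$ with $u\neq 0$ fails, since $\{\phi'(u):\phi'\in\Hom(U,V)\} = V$ would force $f_1 = 0$; this confirms that the degeneracy is genuinely produced by the $\Hom(U,V)$ summand.
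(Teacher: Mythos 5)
Your proof is correct and follows the same strategy as the paper's: both arguments exhibit an element of $\Hom(U,V)$ whose values lie in $\ker f_1$ and show that it contracts the reduced form $f_1\circ\omega$ to zero, so that a surjective non-injective $f_1$ cannot be a reduction of coefficients. If anything, your choice $\phi=\lambda\otimes v_0$ is the more careful instantiation --- the paper takes ``the function with constant value $v$,'' which is not literally a linear map and hence not an element of $\Hom(U,V)$, whereas your rank-one homomorphism is, so your version quietly repairs that small slip while leaving the argument otherwise unchanged.
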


\begin{proof}
	Let $f:V\to V'$ be a linear map with $\dim V'<\dim V$, let $\phi_v\in\Hom(U,V)$ denote the function with constant value $v\in\ker f$, and observe that
	\[
		(f\circ\omega)(u+\phi,\phi_v) = f\big[\phi_v(u) - \phi(0)\big] = f(v) = 0
	\]
	for all $u+\phi\in U\oplus\Hom(U,V)$. Thus, $\phi_v\in\ker f\omega$ and we deduce that $f\omega$ is not a polysymplectic form. Therefore, $f$ is not a reduction of coefficients.
\end{proof}


\subsection{The Polysymplectic Orthogonal}

We now introduce the polysymplectic analogue of the symplectic orthogonal. We remark that much of this subsection follows readily from \cite{MarreroRoman-RoySalgado15,LeonVilarino13}.

\begin{definition}
	Let $A$ be a subspace of the $V$-symplectic vector space $(U,\omega)$. The \emph{polysymplectic orthogonal} of $A$ in $U$ is the subspace
	\[
		A^\omega = \{v\in U \,|\, \omega(A,v) = 0\}.
	\]
\end{definition}


We now collect various properties of the polysymplectic orthogonal that will prove useful in the development of the theory.

\begin{lemma}\label{lem:vv_symplectic_subspace_relations}
	Let $(U,\omega)$ be a $V$-symplectic vector space, with subspaces $A,A_i,B,B_i\subseteq U$ ($i\leq N$). Then,
	\begin{enumerate}[i.]
		\item $U^\omega = 0$ and $0^\omega = U$,
		\item If $A\subseteq B$, then $A^\omega\supseteq B^\omega$, \label{part:lem_vv_symplectic_subspace_relations_inclusion_reversal}
		\item $A\subseteq A^{\omega\omega}$, \label{part:lem_vv_symplectic_subspace_relations_double_orthogonal_containment}
		\item $A^\omega = A^{\omega\omega\omega}$,
		\item $\bigcap_i A_i^\omega = \big(\sum_i A_i\big)^\omega$,
		\item $\sum_i A_i^\omega \subseteq \big(\bigcap_i A_i\big)^\omega$.
	\end{enumerate}
\end{lemma}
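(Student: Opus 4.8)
The plan is to treat the six statements as formal consequences of three ingredients: the bilinearity of $\omega$ in each slot, the alternating property $\omega(u,v) = -\omega(v,u)$, and the nondegeneracy condition that $\iota_u\omega = 0$ forces $u = 0$. The key structural point --- and the source of every divergence from the classical symplectic picture --- is that no dimension-counting argument is available here, since $\omega$ is $V$-valued and the rank of the associated map $\iota\omega : U \to \Hom(U,V)$ cannot be pinned down by $\dim U$ alone. Consequently several of the statements that would be equalities in the symplectic setting survive only as one-sided containments.

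First I would dispose of parts (i) and (ii) directly from the definitions. For (i), the identity $0^\omega = U$ is immediate since $\omega(0,v) = 0$ for every $v$, while $U^\omega = 0$ is exactly a restatement of nondegeneracy: $v \in U^\omega$ means $\omega(u,v) = 0$ for all $u$, i.e.\ $\iota_v\omega = 0$, forcing $v = 0$. Part (ii) is pure monotonicity: if $A \subseteq B$ and $\omega(B,v) = 0$, then a fortiori $\omega(A,v) = 0$.

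Next come parts (iii) and (iv), which I would organize around the observation that $A \mapsto A^\omega$ is the polarity attached to the symmetric relation $\{(u,v) : \omega(u,v) = 0\}$, symmetric precisely because $\omega$ is alternating. For (iii), any $a \in A$ annihilates every $w \in A^\omega$, since $\omega(a,w) = 0$ by definition of $A^\omega$, and by the alternating property this says $a \in (A^\omega)^\omega = A^{\omega\omega}$. Part (iv) then follows formally from (ii) and (iii), with no new geometric input: applying (iii) to $A^\omega$ gives $A^\omega \subseteq A^{\omega\omega\omega}$, while applying the inclusion-reversal (ii) to the containment $A \subseteq A^{\omega\omega}$ of (iii) gives $A^{\omega\omega\omega} = (A^{\omega\omega})^\omega \subseteq A^\omega$; the two inclusions combine to the claimed equality.

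Finally, parts (v) and (vi) isolate the behaviour of the polar under sums and intersections. For (v) I would unwind both sides: $v \in \big(\sum_i A_i\big)^\omega$ means $\omega$ vanishes when $\sum_i A_i$ is paired with $v$, and since this sum is linearly spanned by the $A_i$, bilinearity makes this equivalent to $\omega(A_i,v) = 0$ for every $i$, i.e.\ $v \in \bigcap_i A_i^\omega$. Part (vi) is then a consequence of monotonicity alone: since $\bigcap_j A_j \subseteq A_i$ for each $i$, part (ii) yields $A_i^\omega \subseteq \big(\bigcap_j A_j\big)^\omega$, and summing over $i$ inside the subspace $\big(\bigcap_j A_j\big)^\omega$ gives the containment. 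I expect no genuine obstacle in any single step; the only point requiring care is conceptual rather than computational --- namely resisting the temptation to upgrade the containments in (iii) and (vi) to equalities, which is exactly where the polysymplectic theory departs from the symplectic one.
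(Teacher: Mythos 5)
Your proposal is correct and follows essentially the same route as the paper: (i), (ii), (v) by unwinding definitions, (iii) from the symmetry of the vanishing relation, (iv) by combining (ii) and (iii), and (vi) from monotonicity. Your explicit appeal to the alternating property in (iii) is a slightly more careful rendering of a step the paper leaves implicit, but the argument is the same.
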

\begin{proof}
	(i) and (ii) are immediate.
	\begin{enumerate}[i.]\setcounter{enumi}{2}
		\item We have
		\[
			u\in A \implies \;\; \forall u'\in A^\omega\,:\, \omega(u,u') = 0  \;\; \implies u\in A^{\omega\omega}.
		\]
		\item Apply (ii) to (iii) to obtain $A^\omega\supseteq (A^{\omega\omega})^\omega$ and note that (iii) alone provides $A^\omega \subseteq (A^\omega)^{\omega\omega}$.
		\item A direct computation yields
			\begin{align*}
				u\in \big({\textstyle\sum_i}A_i\big)^\omega
					&\iff	\forall u'\in {\textstyle\sum_i A_i}\,:\,\omega(u,u') = 0				\\
					&\iff	\forall i\leq N \,:\, \forall u_i\in A_i \,:\, \omega(u,u_i) = 0		\\
					&\iff	\forall i\leq N \,:\, u\in A_i^\omega									\\
					&\iff	u\in \textstyle\bigcap_i A_i^\omega.
			\end{align*}

		\item Applying (ii) to the inclusion $\bigcap_i A_i\subseteq A_j$, we deduce that $A_j^\omega \subseteq \big(\bigcap_iA_i\big)^\omega$ for all $j\leq N$, and thus $\sum_j A_j^\omega\subseteq \big(\bigcap_iA_i\big)^\omega$.
	\end{enumerate}
\end{proof}

\begin{example}
\begin{enumerate}[A.]
	\item Let $(U,\oplus_i\omega_i)$ be the $\R^N$-symplectic vector space as above. For $A\subseteq U$ and $u\in U$, it follows that
	\[
		u\subseteq A^{\oplus_i\omega_i} \iff\; \forall i\leq N \,:\, u\in A^{\omega_i},
	\]
	from which we conclude
	\[
		A^{\oplus_i\omega_i} = \textstyle\bigcap_i A^{\omega_i}.
	\]
	
	\item The polysymplectic orthogonal of a subspace $\mathfrak{a}\subseteq\g$ is the centralizer $\mathfrak{c}_\g(\mathfrak{a})=\big\{\xi\in\g\,\big|\,[\mathfrak{a},\xi]=0\big\}$ of $\mathfrak{a}$ in $\g$. If $\g$ is semisimple, then $\mathfrak{a}^{\omega\omega}=\mathfrak{a}$ if and only if $\mathfrak{a}\subseteq\g$ is an ideal.
	\item Let $e_1,e_2,e_3$ be the standard coordinate basis vectors of the $\R^3$-symplectic vector space $(\R^3,\times)$. Then
	\[
		\langle e_1\rangle^\omega = \{v\in\R^3 \,|\, v\times e_1 = 0\} = \langle e_1\rangle
	\]
	and
	\[
		\langle e_1,e_2\rangle^\omega = \{v\in\R^3 \,|\, v\times e_1 = v\times e_2 = 0\} = 0.
	\]
	Thus we have
	\begin{center}
	\begin{tabular}{c|c}
		$A$		&$A^\omega$		\\ \hline \vspace{-.3cm}
				&				\\
		$0$		&$\R^3$			\\
		$\ell$	&$\ell$			\\
		$w$		&$0$				\\
		$\R^3$	&$0$
	\end{tabular}
	\end{center}
	for any $1$-dimensional subspace $\ell$ and $2$-dimensional subspaces $w$.
	\item Let $A\subseteq U$ and $B\subseteq \Hom(U,V)$, define the subspace
	\[
		I(A) = \{\phi\,|\,\phi(A)=0\} \subseteq \Hom(U,V),
	\]
	and let $B^0\subseteq U$ be the annihilator of $B$. By noting that $A^\omega = U\oplus I(A)$ and $B^\omega = B^0\oplus\Hom(U,V)$, and invoking Lemma \ref{lem:vv_symplectic_subspace_relations}, we obtain
	\begin{align*}
		A^{\omega\omega} &= \big(U\oplus I(A)\big)^\omega	 = U \cap I(A)^\omega						\\
			&\hspace{.5cm}= U\cap \big(I(A)^0 \oplus \Hom(U,V)\big) = I(A)^0 = A.
	\end{align*}
	On the other hand, it is not generally true that $B^{\omega\omega}=B$.
\end{enumerate}
\end{example}


Parallel to the classical subspace designations, we apply the following terminology for the subspaces $A$ of a $V$-symplectic vector spaces $(U,\omega)$.

\begin{center}
\begin{tabular}{rl}
	term					&condition					\\ \hline\vspace{-.3cm}
							&							\\
	\emph{isotropic}		&$A\subseteq A^\omega$			\\
	\emph{coisotropic}		&$A^\omega\subseteq A$			\\
	\emph{Lagrangian}		&$A^\omega = A$				\\
	\emph{polysymplectic}	&$A^\omega \cap A= 0$		\\
\end{tabular}
\end{center}

\begin{example}
\begin{enumerate}[A.]
	\item The subspace $A\subseteq U$ is polysymplectic with respect to $\oplus_i\omega_i$ if it symplectic with respect to each $\omega_i$. This condition, however, is not necessary.
	\item The isotropic subspaces of $(\g,[\,,])$ are the precisely the abelian subalgebras. If $\g$ is semisimple, then the Lagrangian subspaces are precisely the Cartan subalgebras.
	\item The Lagrangian (resp.\ coisotropic) subspaces of $(\R,\times)$ are precisely the $1$-dimensional (resp.\ $2$ and $3$ dimensional) subspaces.
	\item The Lagrangian subspaces of $U\oplus\Hom(U,V)$ include $U$ and $\Hom(U,V)$, from which it follows that every subspace $A\subseteq U$ and $B\subseteq\Hom(U,V)$ is isotropic. Lemma \ref{lem:vv_symplectic_subspace_relations} yields
	\[
		(U\oplus B)^{\omega} = U\cap B^{\omega} = B^0,
	\]
	from which we deduce
	\[
		(U\oplus B)^\omega \cap (U\oplus B) = B^0\cap (U\oplus B) = B^0.
	\]
	Thus, $U\oplus B$ is polysymplectic if and only if the annihilator $B^0\subseteq U$ vanishes.
\end{enumerate}
\end{example}

As in the classical situation, Lagrangian subspaces cannot properly contain each other.

\begin{proposition}\label{prop:Lagrangian_subspace_containment}
	If $A\subseteq B$ are Lagrangian subspaces of $(U,\omega)$, then $A=B$.
\end{proposition}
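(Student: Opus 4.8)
The plan is to deduce the statement directly from the inclusion-reversing behavior of the polysymplectic orthogonal, which is recorded as part (ii) of Lemma \ref{lem:vv_symplectic_subspace_relations}. No new geometry is needed; the result is a formal consequence of the defining condition $A^\omega = A$ (resp. $B^\omega = B$) for Lagrangian subspaces together with that lemma.

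First I would apply part (ii) to the hypothesis $A \subseteq B$, which reverses the inclusion to give $B^\omega \subseteq A^\omega$. Next I would substitute the Lagrangian conditions: since $A$ and $B$ are Lagrangian we have $A^\omega = A$ and $B^\omega = B$, so the chain becomes
\[
    B = B^\omega \subseteq A^\omega = A.
\]
This yields $B \subseteq A$, and combining with the standing hypothesis $A \subseteq B$ forces $A = B$, as desired.

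There is essentially no obstacle here, as the argument is purely formal. The only point requiring a modicum of care is the direction of the inclusion reversal: one must apply Lemma \ref{lem:vv_symplectic_subspace_relations}(ii) correctly so as to conclude $B^\omega \subseteq A^\omega$ rather than the reverse. Once this is pinned down, the two Lagrangian identities close the argument immediately. I would write the proof as a single displayed chain of inclusions, noting that it is the exact analogue of the classical symplectic fact and relies on none of the features that distinguish the polysymplectic setting (indeed it does not even require $A^{\omega\omega} = A$, which as emphasized in the introduction may fail).
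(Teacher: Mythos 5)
Your argument is correct and is exactly the paper's proof: apply the inclusion-reversal of Lemma \ref{lem:vv_symplectic_subspace_relations}(ii) to $A\subseteq B$ and then substitute the Lagrangian identities to get $B = B^\omega \subseteq A^\omega = A$. Nothing further is needed.
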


\begin{proof}
	An application of Lemma \ref{lem:vv_symplectic_subspace_relations} yields $A = A^\omega \supseteq B^\omega = B$.
\end{proof}


\subsection{Reduction of $V$-Symplectic Vector Spaces}

\begin{theorem}\label{thm:linear_vv_symplectic_reduction}
	If $A$ is a subspace of the $V$-symplectic vector space $(U,\omega)$, then $\omega$ descends to a bilinear form $\omega_A$ on the quotient $A^\omega/(A\cap A^\omega)$ with kernel $(A^{\omega\omega}\cap A^\omega)/(A\cap A^\omega)$. In particular, $\omega_A$ is polysymplectic if $A^{\omega\omega}=A$.
\end{theorem}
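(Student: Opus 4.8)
The plan is to treat this as a statement about a single alternating bilinear form, namely the restriction $\omega|_{A^\omega}$ of $\omega$ to the subspace $A^\omega$, and to invoke the elementary fact that an alternating form descends to a quotient exactly when one quotients by a subspace of its radical. Concretely, for an alternating form $\eta$ on a vector space $W$ with radical $\mathrm{rad}\,\eta = \{w\in W : \eta(w,w')=0 \text{ for all } w'\in W\}$, and any subspace $R\subseteq\mathrm{rad}\,\eta$, the assignment $\bar\eta(w+R,w'+R)=\eta(w,w')$ is well defined and has radical $(\mathrm{rad}\,\eta)/R$. Everything then reduces to identifying the radical of $\omega|_{A^\omega}$ and checking that $A\cap A^\omega$ sits inside it.

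First I would compute the radical of $\omega|_{A^\omega}$. By the definition of the polysymplectic orthogonal, an element $v\in A^\omega$ satisfies $\omega(v,w)=0$ for all $w\in A^\omega$ precisely when $v\in(A^\omega)^\omega = A^{\omega\omega}$; hence the radical of $\omega|_{A^\omega}$ is exactly $A^\omega\cap A^{\omega\omega}$. Next, part (\ref{part:lem_vv_symplectic_subspace_relations_double_orthogonal_containment}) of Lemma \ref{lem:vv_symplectic_subspace_relations} gives $A\subseteq A^{\omega\omega}$, so that $A\cap A^\omega\subseteq A^{\omega\omega}\cap A^\omega$; that is, the subspace we wish to quotient by is contained in the radical. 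Applying the elementary descent fact above with $W=A^\omega$ and $R=A\cap A^\omega$ then yields the well-defined descended form $\omega_A$ on $A^\omega/(A\cap A^\omega)$, whose radical (kernel) is $(A^{\omega\omega}\cap A^\omega)/(A\cap A^\omega)$, as claimed.

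Finally, for the \emph{in particular} statement, if $A^{\omega\omega}=A$ then $A^{\omega\omega}\cap A^\omega = A\cap A^\omega$, so the kernel $(A^{\omega\omega}\cap A^\omega)/(A\cap A^\omega)$ vanishes and $\omega_A$ is nondegenerate, i.e.\ polysymplectic. The only point requiring genuine care is the elementary descent fact itself, in particular verifying that quotienting an alternating form by a subspace of its radical leaves a form whose radical is precisely the image of the original radical; but this is a routine linear-algebra verification, so I expect no substantive obstacle. The real content of the theorem is the clean identification of the radical of $\omega|_{A^\omega}$ with $A^\omega\cap A^{\omega\omega}$, which is where the double-orthogonal and the failure of $A^{\omega\omega}=A$ in general enter, and which accounts for the discrepancy with the classical reduction where the analogous kernel is simply $A\cap A^\omega$.
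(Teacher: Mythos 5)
Your proof is correct and follows essentially the same route as the paper's: the paper verifies well-definedness by directly expanding $\omega(u+B,u'+B)$ for $B=A\cap A^\omega$ (which is exactly the check that $B$ lies in the radical of $\omega|_{A^\omega}$), and then identifies the kernel as $(A^{\omega\omega}\cap A^\omega)/(A\cap A^\omega)$ via $(A^\omega)^\omega=A^{\omega\omega}$, just as you do. Your packaging in terms of the radical of the restricted form is a harmless reorganization of the same computation.
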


\begin{proof}
	Let $u,u'\in A^\omega$, put $B=A\cap A^\omega$, and observe that
	\begin{align*}
		\omega(u+B,u'+B)
			&=	\omega(u,u') + \omega(u,B) + \omega(B,u') + \omega(B,B)		\\
			&=	\omega(u,u').
	\end{align*}
	Thus, $\omega$ descends to a well-defined form on $A^\omega/(A\cap A^\omega)$. If $u\in A^\omega$, then the condition that $\omega(u,A\cap A^\omega)=0$ obtains precisely when $u\in A^{\omega\omega}\cap A^\omega$. It follows that the kernel of the induced form on $A^\omega/(A\cap A^\omega)$ is equal to $(A^{\omega\omega}\cap A^\omega)/(A\cap A^\omega)$.
\end{proof}

The following corollary is immediate.

\begin{corollary}\label{cor:vv_symplectic_vs_reduction}
	If $A$ is an isotropic subspace of $(U,\omega)$, then $\omega$ descends to a bilinear form $\omega_A$ on $A^\omega/A$ with kernel $A^{\omega\omega}/A$. In particular, $\omega_A$ is polysymplectic if and only if $A^{\omega\omega}=A$.
\end{corollary}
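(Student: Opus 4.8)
The plan is to specialize Theorem \ref{thm:linear_vv_symplectic_reduction} to the isotropic case and then simplify the quotient and the kernel that it produces, using the isotropy hypothesis together with Lemma \ref{lem:vv_symplectic_subspace_relations}. Since every ingredient is already in hand, the argument will be short.

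First I would invoke isotropy, $A \subseteq A^\omega$, to observe that $A \cap A^\omega = A$. Substituting this into Theorem \ref{thm:linear_vv_symplectic_reduction} immediately identifies the domain $A^\omega/(A \cap A^\omega)$ with $A^\omega/A$ and guarantees that $\omega$ descends to a well-defined bilinear form $\omega_A$ on it.

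Next I would simplify the kernel $(A^{\omega\omega} \cap A^\omega)/(A \cap A^\omega)$ supplied by the theorem. The one point that genuinely uses the isotropy hypothesis is the inclusion $A^{\omega\omega} \subseteq A^\omega$: applying the inclusion-reversing property (part (ii) of Lemma \ref{lem:vv_symplectic_subspace_relations}) to $A \subseteq A^\omega$ yields $(A^\omega)^\omega \subseteq A^\omega$, that is $A^{\omega\omega} \subseteq A^\omega$. Hence $A^{\omega\omega} \cap A^\omega = A^{\omega\omega}$, and together with $A \cap A^\omega = A$ the kernel collapses to $A^{\omega\omega}/A$, as asserted.

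Finally, for the biconditional I would note that $\omega_A$ is polysymplectic exactly when this kernel is trivial, i.e.\ when $A^{\omega\omega}/A = 0$. Because $A \subseteq A^{\omega\omega}$ always holds (part (iii) of Lemma \ref{lem:vv_symplectic_subspace_relations}), the quotient vanishes precisely when $A^{\omega\omega} = A$, giving the stated equivalence. There is no real obstacle here; the only substantive step is the inclusion $A^{\omega\omega} \subseteq A^\omega$, which is what upgrades the one-directional implication of the theorem to a full equivalence by ensuring that intersection with $A^\omega$ disturbs neither $A$ nor $A^{\omega\omega}$.
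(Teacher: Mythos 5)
Your proof is correct and follows exactly the route the paper intends: the paper simply declares the corollary ``immediate'' from Theorem \ref{thm:linear_vv_symplectic_reduction}, and your specialization (using $A\subseteq A^\omega$ to collapse $A\cap A^\omega$ to $A$ and $A^{\omega\omega}\cap A^\omega$ to $A^{\omega\omega}$, then invoking $A\subseteq A^{\omega\omega}$ for the biconditional) is precisely the omitted verification.
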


We call $\big(A^\omega/(A\cap A^\omega),\omega_A\big)$ the \emph{reduction} of $(U,\omega)$ by $A\subseteq U$, with \emph{reduced space} $A^\omega/(A\cap A^\omega)$ and \emph{reduced form} $\omega_A$. In contrast to the classical case, the reduced form $\omega_A$ may be degenerate.

\begin{example}
\begin{enumerate}[A.]
	\item The reduction of $(U,\oplus_i\omega_i)$ by the subspace $A\subseteq U$ is the intersection $\cap_i\,A^{\omega_i}/(A\cap A^{\omega_i})$ of the reductions of $(U,\omega_i)$ by $A$ for each $i\leq N$.
	\item The reduction of $(\g,[\,,])$ by the subspace $\mathfrak{a}\subseteq\g$ is the quotient $\mathfrak{c}_\g(\mathfrak{a})/\mathfrak{z}(\mathfrak{a})$ of the centralizer $\mathfrak{c}_\g(\mathfrak{a})$ of $\mathfrak{a}$ by its center $\mathfrak{z}(\mathfrak{a}) = \mathfrak{a}\cap\mathfrak{c}_\g(\mathfrak{a})$. If $\g$ is semisimple and $\mathfrak{a}$ is an ideal, then the reduction is polysymplectic.
	\item The reduction of $(\R^3,\times)$ by any subspace is a point.
	\item The reduction of $U\oplus\Hom(U,V)$ by the subspace $A\subseteq U$ is the sum $U/A\oplus I(A)$. Since $A^{\omega\omega}=A$, Corollary \ref{cor:vv_symplectic_vs_reduction} ensures that the reduced form $\omega_A$ is always polysymplectic. Indeed, the reduction is naturally isomorphic to $U/A\oplus\Hom(U/A,V)$.
\end{enumerate}
\end{example}

\section{The $V$-Hamiltonian Formalism}\label{sec:v-hamiltonian formalism}

Having developed the theory of $V$-symplectic vector spaces, we turn our attention now to the global setting. Our aim is to arrive at a theory parallel to the classical Hamiltonian formalism. In particular, we would like to arrive at suitable definitions for the notions of Hamiltonian actions and symplectic reduction in the vector-valued context.


\subsection{$V$-Symplectic Manifolds}

The fundamental definition of $V$-symplectic geometry is as follows.

\begin{definition}\label{def:vv_symplectic_manifold}
	Fix a manifold $M$ and a vector space $V$. A \emph{$V$-symplectic structure} $\omega\in\Omega^2(M,V)$ on $M$ is a closed $2$-form which is nondegenerate in the sense that $\iota_X\omega=0$ only if $X=0$. We call the pair $(M,\omega)$ a \emph{$V$-symplectic manifold}.
\end{definition}

A polysymplectic manifold is a $V$-symplectic manifold for some vector space $V$.

If $(M,\omega)$ is \emph{exact}, that is, if $\omega=-\d\theta$ for some $\theta\in\Omega^1(M,V)$, then we call $\theta$ a \emph{$V$-symplectic potential} for $\omega$.

\begin{example}
\begin{enumerate}[A.]
	\item Let $M$ be a smooth and even-dimensional manifold and suppose that $(\omega_i)_{i\leq N}$ is a collection of symplectic forms on $M$. The map $\oplus_i\omega_i\in \Omega^2(M,\R^N)$, given by
	\[
		(\oplus_i\omega_i)(X,Y) = \oplus_i\big[\omega_i(X,Y)\big]
	\]
	is evidently an $\R^N$-symplectic form on $M$.
	
	\item Let $G$ be a Lie group with discrete center $Z(G)$ and denote by $\theta\in\Omega^1(G,\g)$ the Maurer-Cartan form, that is,
	\[
		\theta_g(X) = (\lambda_{g^{-1}})_*X\in \g,
	\]
	for $g\in G$ and $X\in T_gG$, where $\lambda$ is the left regular representation on $G$. Since the Maurer-Cartan identity asserts that
	\[
		-\d\theta_g(X,Y) = \big[\theta_g X,\theta_g Y\big],
	\]
	and since the center of $\g$ is trivial, it follows that $-\d\theta\in\Omega^2(G,\g)$ is nondegenerate and thus constitutes a $\g$-symplectic form on $G$.
	\item Let $Q_B$ be the configuration manifold of a rigid body $B$ in ambient $3$-space $S$ under rotations about a basepoint $O\in S$. We identify $Q_B$ with the space of pointed orientation-preserving isometries from $(S,O)$ to $(\R^3,0)$. The natural identification of the infinitesimal rotation $X\in T_qQ_B$ with the angular velocity vector $\theta_q(X)\in\R^3\cong_q S$ constitutes a polysymplectic potential $\theta\in\Omega^1(Q_B,\R^3)$ for $\omega_B=-\d\theta = \theta\times\theta$, where $\times$ is the cross product on $\R^3$.
	\item Define the \emph{canonical $1$-form} $\theta\in\Omega^1\big(\Hom(TQ,V),V\big)$ by
	\[
		\theta_\phi(X) = \phi(\pi_*X),
	\]
	where $\phi\in\Hom(TQ,V),\,X\in T_\phi\Hom(TQ,V)$, and $\pi:\Hom(TQ,V)\to Q$ is the projection map. By locally identifying the manifold $Q$ with the vector space $U$ on which it is modeled, it is readily shown that $-\d\theta$ induces the standard $V$-symplectic form on the vector space
	\[
		T_\phi\Hom(TQ,V) \cong U\oplus\Hom(U,V)
	\]
	for each $\phi\in\Hom(TQ,V)$. In particular, $-\d\theta$ is a $V$-symplectic structure on $\Hom(TQ,V)$.
\end{enumerate}
\end{example}

All of these spaces are regular in the sense that every two points have symplectomorphic neighborhoods. This property is similar to what in multisymplectic geometry is  known as \emph{flatness} \cite{RyvkinWurzbacher18}.

\begin{remark}
	If $L:TQ\to V$ is a smooth map with nonvanishing second variation along the fibers of $TQ$, then the fiber derivative $\F L:TQ\to \Hom(TQ,V)$ defines an immersion of $TQ$ in $\Hom(TQ,V)$. Moreover, the canonical $V$-symplectic form $\omega$ on $\Hom(TQ,V)$ pulls back to a $V$-symplectic form $\omega_L=\F L^*\omega$ on the velocity phase space $TQ$. Unlike the classical case, when $\dim V\geq 2$ the immersion $\F L$ is never an embedding and may even have compact image.
\end{remark}

There are two natural generalizations of the classical notion of a symplectic map.

\begin{definition}
	Let $(M,\omega)$ and $(M',\omega')$ be $V$ and $V'$-symplectic manifolds, respectively. A \emph{weak polysymplectic map},
	\[
		f : (M,\omega) \to (M',\omega')
	\]
	consists of a diffeomorphism and a linear transformation
	\begin{align*}
		f_0: M	&\to		M'		\\
		f_1: V	&\to		V',
	\end{align*}
	such that $f_0^*\omega' = f_1\circ\omega$. We call $f$ a \emph{polysymplectic map} when $V=V'$ and $f_1=1_V$, and we call $f$ a \emph{morphism of coefficients} when $M=M'$ and $f_0=1_M$. A morphism of coefficients $f$ is said to be an \emph{extension (resp.\ reduction) of coefficients} if $f_1$ is injective (resp.\ surjective).
\end{definition}

When there is no room for confusion, we will frequently identify $f$ with $f_0$ or $f_1$.

\begin{example}
\begin{enumerate}[A.]
	\item The diffeomorphism $f:M\to M'$ is a polysymplectic map from $(M,\oplus_{i\leq N}\omega_i)$ to $(M',\oplus_{i\leq N}\omega'_i)$ if and only if it is a classical symplectic map from $(M,\omega_i)$ to $(M',\omega'_i)$ for each $i\leq N$.
	\item If $G$ and $G'$ are Lie groups with discrete centers, and if $f:G\to G'$ is any homomorphism, then $f$ is a weak polysymplectic map from $(G,-\d\theta)$ to $(G,-\d\theta')$.
	
	Fix $g\in G$ and let $\lambda_g:G\to G$ denote the left multiplication by $g$. For any $h\in G$ and $X\in T_hG$, we have
	\[
		\lambda_g^*\theta_{gh}(X) = (\lambda_{(gh)^{-1}})_*(\lambda_g)_*X = \theta_h(X),
	\]
	from which we deduce $\lambda_g^*\theta = \theta$. We conclude that $\lambda_g^*\d\theta = \d\theta$, and thus $\lambda_g$ is a polysymplectomorphism of $(G,-\d\theta)$. This establishes the regularity of $(G,-\d\theta)$.
	\item If $B$ and $B'$ are two rigid bodies in $S$ with basepoints $O$ and $O'$, respectively, then the polysymplectic maps from $Q_B$ to $Q_{B'}$ are precisely the pointed orientation-preserving isometries from $(S,O)$ to $(\mathbb{A}^3,O')$. Additionally, the action of $SO(3)$ on $Q_B$ establishes a weak polysymplectomorphism from $(Q_B,\omega_B)$ to $(\SO(3),-\d\theta)$ which is natural up to the choice of reference configuration $q_0\in Q_B$.
	\item Any diffeomorphism $\bar{f}:Q\to Q$ extends to a polysymplectomorphism $f:\Hom(TQ,V)\to\Hom(TQ,V)$. In particular, the $V$-symplectic structure of $\Hom(TQ,V)$ is preserved by the action of $\Diff\,Q$.
	
	Polysymplectic spaces that are locally isomorphic to $\Hom(TQ,V)$ for some $V$ are referred to in the literature as \emph{standard} \cite{Gunther87,MarreroRoman-RoySalgado15}.
\end{enumerate}
\end{example}

Recall that the classical Darboux theorem asserts that every symplectic manifold $(M,\omega)$ is locally isomorphic to a cotangent bundle $T^*Q$. In the $V$-symplectic setting, we obtain a weaker result.

\begin{theorem}\label{thm:phase_space_local_embedding}
	Every $V$-symplectic manifold $(M,\bar\omega)$ locally polysymplectically embeds in $\Hom(TM,V)$.
\end{theorem}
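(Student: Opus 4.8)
The plan is to globalize the linear inclusion of the preceding Proposition by pairing a \emph{local} potential for $\bar\omega$ with the tautological property of the canonical $1$-form on $\Hom(TM,V)$; no Moser-type deformation is needed. Concretely, fix $p\in M$. Since $\bar\omega$ is closed, the Poincar\'e lemma furnishes a contractible open neighborhood $W$ of $p$ and a $1$-form $\vartheta\in\Omega^1(W,V)$ with $-\d\vartheta=\bar\omega|_W$. Because $W\subseteq M$ is open, $\Hom(TW,V)=\Hom(TM,V)|_W$ is an open subset of $\Hom(TM,V)$, and I regard $\vartheta$ as a section
\[
	s_\vartheta:W\to\Hom(TM,V),\qquad q\mapsto\vartheta_q\in\Hom(T_qM,V),
\]
of the projection $\pi:\Hom(TM,V)\to M$ over $W$. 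I claim $s_\vartheta$ is the desired local polysymplectic embedding.

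First I would check that $s_\vartheta$ is an embedding. As a smooth section of a vector bundle it is injective and an immersion, since $\pi\circ s_\vartheta=\mathrm{id}_W$ forces $\pi_*\circ(s_\vartheta)_*=\mathrm{id}$; moreover $\pi$ restricts to a continuous inverse of $s_\vartheta$ on its image, so $s_\vartheta$ is a homeomorphism onto its image and hence an embedding into the open set $\Hom(TM,V)|_W$.

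The central step is the tautological pullback identity $s_\vartheta^*\theta=\vartheta$, where $\theta$ is the canonical $1$-form. For $q\in W$ and $X\in T_qW$,
\[
	(s_\vartheta^*\theta)_q(X)=\theta_{s_\vartheta(q)}\big((s_\vartheta)_*X\big)=s_\vartheta(q)\big(\pi_*(s_\vartheta)_*X\big)=\vartheta_q(X),
\]
using the definition $\theta_\phi(Y)=\phi(\pi_*Y)$, the identity $\pi\circ s_\vartheta=\mathrm{id}_W$, and $s_\vartheta(q)=\vartheta_q$. Writing $\omega=-\d\theta$ for the canonical $V$-symplectic form, it follows that
\[
	s_\vartheta^*\omega=s_\vartheta^*(-\d\theta)=-\d(s_\vartheta^*\theta)=-\d\vartheta=\bar\omega|_W,
\]
so $s_\vartheta$ pulls the canonical structure back to $\bar\omega$ and is therefore a polysymplectic embedding.

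I expect the main point to be conceptual rather than computational: recognizing that the correct target is the canonical form on $\Hom(TM,V)$ and that the graph of \emph{any} local primitive is automatically polysymplectic. This also explains the sharp local-versus-global dichotomy noted after the statement, since the construction is global exactly when a global primitive $\vartheta$ exists, i.e.\ when $\bar\omega$ is exact. As a consistency check, taking $M=U$ a vector space with constant $\bar\omega$ and the radial primitive $\vartheta_u=-\tfrac12\iota_u\bar\omega$ recovers precisely the linear inclusion $u\mapsto u-\tfrac12\iota_u\bar\omega$ of the preceding Proposition.
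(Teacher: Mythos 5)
Your proposal is correct and follows the same route as the paper: take a local primitive $\vartheta$ of $\bar\omega$, view it as a section of $\pi:\Hom(TM,V)\to M$, and use the tautological identity $s_\vartheta^*\theta=\vartheta$ to conclude that the section pulls the canonical form back to $\bar\omega$. The only difference is that you spell out the embedding property and the exterior-derivative step, which the paper leaves implicit.
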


\begin{proof}
	Let $O\subseteq M$ be any open set on which $\omega$ is exact, choose $\bar\theta\in\Omega^1(O,V)$ so that $\omega|_O=-\d\bar\theta$, and observe that the section
	\begin{align*}
		\bar\theta:	O	&\to			\Hom(TM,V)		\\
					x	&\mapsto		\;\bar\theta_x
	\end{align*}
	is a smooth embedding. For any $x\in O$ and $X\in T_xM$, the image $\bar\theta_*X$ is tangent to $\Hom(TM,V)$ at $\bar\theta_x$ so that
	\[
		\theta(\bar\theta_*X) = \bar\theta_x(\pi_*\bar\theta_*X) = \bar\theta_x(X),
	\]
	where we have used the fact that $\bar\theta$ is a section of $\pi:\Hom(TM,V)\to M$. We conclude that the embedding $\theta$ is polysymplectic.
\end{proof}


Let us turn briefly to describe certain special classes of submanifolds.

\begin{definition}
	Fix a $V$-symplectic manifold $(M,\omega)$. A smooth submanifold $N\subseteq M$ is said to be \emph{polysymplectic} (resp.\ \emph{isotropic}, \emph{coisotropic}, \emph{Lagrangian}) when the subspace $T_xN\subseteq T_xM$ is polysymplectic (resp.\ isotropic, coisotropic, Lagrangian) at every point $x\in N$.
\end{definition}

Equivalently, $N\subseteq M$ is polysymplectic (resp.\ isotropic) when the restriction of $\omega$ to $N$ is a $V$-symplectic structure (resp.\ the zero form) on $N$.

\begin{example}
\begin{enumerate}[A.]
	\item The submanifold $N\subseteq M$ is polysymplectic with respect to $\oplus_i\omega_i$ if it symplectic with respect to each $\omega_i$. However, this is only a sufficient condition.
	\item If $T\subseteq G$ is a maximal torus, then $T$ is a Lagrangian submanifold of $(G,-\d\theta)$.
	\item The Lagrangian (resp.\ coisotropic) submanifolds of $Q_B$ are precisely the $1$-dimensional (resp.\ $2$ and $3$-dimensional) submanifolds.
	\item The fibers of $\Hom(TQ,V)$ are Lagrangian submanifolds.
\end{enumerate}
\end{example}

\begin{proposition}
	Suppose that $(M,\omega)$ is a $V$-symplectic manifold and that $\omega'\in\Omega^2(M,V')$ is an extension of coefficients of $\omega\in\Omega^2(M,V)$. If $N\subseteq M$ is polysymplectic (resp.\ coisotropic) with respect to $\omega$, then $N$ is polysymplectic (resp.\ coisotropic) with respect to $\omega'$.
\end{proposition}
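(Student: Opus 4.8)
The plan is to reduce the entire statement to the single pointwise observation that an extension of coefficients leaves every polysymplectic orthogonal unchanged. First I would unwind the hypothesis: by definition, $\omega'$ being an extension of coefficients of $\omega$ means there is an \emph{injective} linear map $f_1:V\to V'$ with $\omega'=f_1\circ\omega$, applied fiberwise on each tangent space.

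The key step is to show that for every point $x\in N$ and the subspace $A=T_xN\subseteq T_xM$, the orthogonals coincide: $A^{\omega'}=A^\omega$. Indeed, a vector $v\in T_xM$ lies in $A^{\omega'}$ precisely when $f_1\big(\omega(a,v)\big)=0$ for all $a\in A$; since $f_1$ is injective, this is equivalent to $\omega(a,v)=0$ for all $a\in A$, that is, to $v\in A^\omega$. This is the whole of the argument, and the same reasoning shows en passant that $\omega'$ is again nondegenerate, so that $N$ sits inside a genuine $V'$-symplectic manifold.

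With this identity in hand the two assertions are immediate from the subspace definitions recorded above. If $N$ is coisotropic with respect to $\omega$, then $A^\omega\subseteq A$ at each $x\in N$; replacing $A^\omega$ by the equal subspace $A^{\omega'}$ gives $A^{\omega'}\subseteq A$, so $N$ is coisotropic with respect to $\omega'$. Likewise, if $N$ is polysymplectic with respect to $\omega$, then $A^\omega\cap A=0$, whence $A^{\omega'}\cap A=0$ and $N$ is polysymplectic with respect to $\omega'$.

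I do not expect any genuine obstacle here: all of the content sits in the injectivity of $f_1$, which is exactly what guarantees that passing from $\omega$ to $\omega'$ neither enlarges nor shrinks any orthogonal. It is worth noting that this is precisely the feature distinguishing extension from reduction of coefficients — under a surjective $f_1$ the kernel can force $\omega'(a,v)=0$ while $\omega(a,v)\neq 0$, so the orthogonal may strictly grow and the analogous conclusion would fail.
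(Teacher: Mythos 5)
Your proof is correct and follows essentially the same route as the paper: both arguments rest on the fact that injectivity of $f_1$ forces $T_xN^{\omega'}\subseteq T_xN^{\omega}$, after which the two claims are immediate from the subspace definitions. The only difference is cosmetic — you record the full equality $T_xN^{\omega'}=T_xN^{\omega}$, whereas the paper states only the inclusion it actually needs.
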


\begin{proof}
	Fix $x\in N$. Since $\omega'$ refines $\omega$, it follows that
	\[
		T_xN^{\omega'} \subseteq T_xN^\omega.
	\]
	Consequently, if $N$ is polysymplectic with respect to $\omega$, then
	\[
		T_xN \cap T_xN^{\omega'} \subseteq T_xN\cap T_xN^\omega = 0,
	\]
	and $N$ is polysymplectic with respect to $\omega'$. If $N$ is coisotropic with respect to $\omega$, then
	\[
		T_xN^{\omega'} \subseteq T_xN^\omega \subseteq T_xN,
	\]
	and thus $N$ is coisotropic with respect to $\omega$.
\end{proof}

\begin{proposition}
	If $N$ is a Lagrangian submanifold of a $V$-symplectic manifold $(M,\omega)$, then $N$ is not contained in any Lagrangian manifold of strictly greater dimension.
\end{proposition}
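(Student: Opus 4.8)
The plan is to reduce this proposition to its pointwise linear analogue, Proposition \ref{prop:Lagrangian_subspace_containment}, which asserts that a Lagrangian subspace cannot be properly contained in a larger Lagrangian subspace. The strategy is to argue by contradiction: suppose $N$ is contained in a Lagrangian submanifold $L$ with $\dim L > \dim N$, and derive a contradiction by comparing tangent spaces at a single point of $N$.

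First I would fix an arbitrary point $x\in N$ and pass to the $V$-symplectic vector space $(T_xM,\omega_x)$. The inclusion $N\subseteq L$ of submanifolds induces an inclusion of tangent spaces $T_xN\subseteq T_xL$. By the definition of a Lagrangian submanifold, each tangent space coincides with its own polysymplectic orthogonal, that is, $T_xN=(T_xN)^{\omega_x}$ and $T_xL=(T_xL)^{\omega_x}$, so both are Lagrangian subspaces of $(T_xM,\omega_x)$.

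With these two facts in hand, I would invoke Proposition \ref{prop:Lagrangian_subspace_containment} with $A=T_xN$ and $B=T_xL$ to conclude that $T_xN=T_xL$. Comparing dimensions then gives $\dim N=\dim T_xN=\dim T_xL=\dim L$, contradicting the assumption $\dim L>\dim N$. Since $x\in N$ was arbitrary (and any nonempty $N$ supplies such a point), this rules out any Lagrangian manifold of strictly greater dimension containing $N$.

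I do not expect a genuine obstacle here: the result is simply the fiberwise shadow of the linear proposition, and the proof amounts to transporting that statement along tangent spaces. The only points requiring care are to record that the two Lagrangian hypotheses at $x$ come directly from the definition of a Lagrangian submanifold, and to note that the dimension of a submanifold is read off from the dimension of any of its tangent spaces.
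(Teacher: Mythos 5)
Your proof is correct and is exactly the argument the paper intends: the paper's proof simply says the result is an immediate consequence of Proposition \ref{prop:Lagrangian_subspace_containment}, and your pointwise reduction to tangent spaces at a single point of $N$ is the expected way to make that explicit.
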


\begin{proof}
	This is an immediate consequence of Proposition \ref{prop:Lagrangian_subspace_containment}.
\end{proof}


\subsection{Polysymplectic and Hamiltonian Actions}

We continue the parallel development with the classical theory with the introduction of polysymplectic and Hamiltonian actions.

\begin{definition}
	Let $(M,\omega)$ be a $V$-symplectic manifold. A \emph{polysymplectic action} $\lambda:G\curvearrowright M$ is one which preserves $\omega$, that is, $\lambda(g)^*\omega=\omega$ for all $g\in G$. A \emph{polysymplectic vector field} $X\in\X(M)$ is one which is induced by a polysymplectic action.
\end{definition}

Thus, $X$ is polysymplectic precisely when $\L_X\omega = 0$. As in the classical context, we require a strengthening of this definition.

\begin{definition}
	Let $(M,\omega)$ be a $V$-symplectic manifold and suppose that $f\in C^\infty(M,V)$ and $X\in\X(M)$ satisfy
	\[
		-\iota_X\omega = \d f.
	\]
	Then $X$ is called the \emph{Hamiltonian vector field} of $f$, and $f$ is called the \emph{Hamiltonian function} of $X$. We also call $X$ the \emph{polysymplectic gradient} of $f$ and denote it by $s\text{-}\mathrm{grad}\,f$. More generally, $f$ (resp.\ $X$) is said to be \emph{Hamiltonian} if it possesses a Hamiltonian vector field (resp.\ Hamiltonian function).
\end{definition}

We will denote by $C_H^\infty(M,V)$ the space of Hamiltonian functions on $(M,\omega)$. Observe that $C_H^\infty(M,V)$ is a $C_H^\infty(M,V)$-symplectic vector space. We note that our Hamiltonian functions are termed \emph{currents} in the G\"unther's original paper \cite{Gunther87}.

In contrast with the classical case, it is not true in general that every function $f\in C^\infty(M,V)$ is Hamiltonian. This is an immediate consequence of the fact that the assignment
\begin{align*}
	\iota\,\omega_x:	T_xM		&\to		T_x^*M\otimes V,	\hspace{1cm}x\in M		\\
						X\;		&\mapsto		\,\iota_X\omega_x
\end{align*}
is never an isomorphism when $\dim V\geq 2$. A function $f\in C^\infty(M,V)$ is Hamiltonian if and only if $\d f_x$ lies in the image of $\iota\,\omega_x:T_xM\hookrightarrow\Hom(T_xM,V)$ at every point $x\in M$. The vector field $X\in\X(M)$ is Hamiltonian when $(\iota\,\omega)^{-1}X\in \Omega^1(M,V)$ is exact.

\begin{example}
\begin{enumerate}[A.]
	\item The function $(f_1,\ldots,f_N)\in C^\infty(M,\R^N)$ is Hamiltonian with respect to $\oplus_i\omega_i$, with Hamiltonian vector field $X\in\X(M)$, if and only if $X$ is the Hamiltonian vector field of $f_i$ with respect to $\omega_i$ for each $i\leq N$.
	\item Fix $\xi\in\g$ and let $\bar{\xi}\in\X(G)$ be the \emph{right} invariant extension of $\xi$ to $\X(G)$. Since the integral flow of $\bar{\xi}$ is realized by left multiplication by $\exp(t\xi)$, and since we have shown left multiplication to preserve $\theta$, it follows that $\L_{\bar{\xi}}\theta = 0$. Therefore,
	\[
		-\iota_{\bar{\xi}}\omega = \iota_{\bar{\xi}}\d\theta = -\d\theta(\bar{\xi}).
	\]
	Now, for every $g\in G$,
	\[
		\theta_g(\bar{\xi}) = (\lambda_{g^{-1}}\rho_g)_*\xi = \Ad_g^{-1} \xi,
	\]
	and thus the function
	\begin{align*}
		\Ad^{-1}\,\xi:	G	&\longrightarrow		\:\g				\\
						g	&\longmapsto			\Ad_g^{-1} \xi
	\end{align*}
	is Hamiltonian, with associated Hamiltonian vector field $-\bar{\xi}$.
	\item The steady rotation of $B$ about a fixed axis $\ell\subseteq S$ induces a Hamiltonian vector field on $Q_B$ with Hamiltonian function the angular velocity of $B$ in the frame $q\in Q_B$. We defer to Example \ref{eg:vv_hamiltonian_systems} for further details.
	\item The Hamiltonian functions on $\Hom(TQ,V)$ include the lifts $\pi^*f:\Hom(TQ,V)$ of smooth functions $f:Q\to V$, with associated Hamiltonian vector fields precisely the vertical vector fields on the bundle $\Hom(TQ,V)$ over $Q$.
\end{enumerate}
\end{example}

\begin{definition}
	The \emph{bracket}
	\[
		\{\,,\}:C_H^\infty(M,V)\times C_H^\infty(M,V)\to C_H^\infty(M,V)
	\]
	is defined on the space of Hamiltonian functions $C_H^\infty(M,V)$ by
	\[
		\{f,f'\} = -\omega(X_f,X_{f'}),
	\]
	for $f,f'\in C_H^\infty(M)$.
\end{definition}

\begin{remark}
The operation $\{\,,\}$ is a Lie bracket on $C_H^\infty(M,V)$, with respect to which the polysymplectic gradient map $f\mapsto X_f$ describes a Lie algebra antihomomorphism, and satisfies the property that
\[
	\{f,sf'\} = X_f(s)f' + s\,\{f,f'\}, \hspace{.8cm}f,f'\in C_H^\infty(M,V),\;s\in C^\infty(M),
\]
whenever $sf'\in C_H^\infty(M,V)$. However, it is noted in \cite{LucasVilarino15} that there is not in general a natural associative product on $C_H^\infty(M,V)$ with respect to which $\{\,,\}$ would be a Poisson bracket.
\end{remark}

\begin{definition}
	Let $\lambda$ be a polysymplectic action of a Lie group $G$ on the $V$-symplectic manifold $(M,\omega)$. A \emph{weak comoment map} is any linear map
	\[
		\tilde\mu: \g \to C_H^\infty(M,V)
	\]
	that lifts the fundamental vector fields of $\lambda$ to the space of Hamiltonian functions $C_H^\infty(M,V)$, as indicated in the following diagram.

	\vspace{.5cm}
	\begin{center}
	\begin{tikzcd}
																&C_H^\infty(M,V)\ar[d,"s\text{-}\mathrm{grad}"]		\\
		\g\ar[r,"\lambda_*" below]\ar[ur,"\tilde\mu",dashed]		&\X(M)
	\end{tikzcd}
	\end{center}
	\vspace{.5cm}
	If $\tilde\mu$ is additionally a morphism of Lie algebras, then it is called a \emph{comoment map}. The \emph{(weak) moment map} associated to a (weak) comoment map $\tilde\mu$ is the smooth function
	\[
		\mu : M \to \Hom(\g,V)
	\]
	given by
	\[
		\mu(x)(\xi) = \tilde\mu(\xi)(x),
	\]
	for $x\in M$ and $\xi\in\g$. When the action of $G$ admits a moment map $\mu$, the action is said to be \emph{Hamiltonian} and the quadruple $(M,\omega,G,\mu)$ is called a $V$-\emph{valued Hamiltonian system} or a $V$-\emph{Hamiltonian system}.
\end{definition}

\begin{proposition}\label{prop:exact_symplectic_form_moment_map}
	If the action of a Lie group $G$ on an exact $V$-symplectic manifold $(M,-\d\theta)$ preserves the polysymplectic potential $\theta\in\Omega^1(M,G)$, then the function $\mu_\theta:M\to\Hom(\g,V)$ given by
	\[
		\mu_\theta(x)(\xi) = \theta_x(\underline\xi_x),
	\]
	for $x\in M$ and $\xi\in\g$ is a moment map.
\end{proposition}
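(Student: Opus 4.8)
The plan is to exhibit the comoment map underlying $\mu_\theta$ and to verify directly that it lifts the fundamental vector fields. Concretely, I would set
\[
	\tilde\mu:\g\to C^\infty(M,V),\qquad \tilde\mu(\xi)=\theta(\underline\xi)=\iota_{\underline\xi}\theta,
\]
so that $\mu_\theta(x)(\xi)=\tilde\mu(\xi)(x)$ recovers the stated formula. Linearity of $\tilde\mu$ is immediate, since $\xi\mapsto\underline\xi$ is linear and $\theta$ is a fixed $V$-valued $1$-form. It then remains to show that each $\tilde\mu(\xi)$ is Hamiltonian with $\underline\xi$ as its polysymplectic gradient, and—if one reads \emph{moment map} in the strict sense of the definition above—that $\tilde\mu$ intertwines the Lie bracket on $\g$ with the bracket $\{\,,\}$ on $C_H^\infty(M,V)$.

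The heart of the argument is Cartan's magic formula together with the invariance hypothesis. Preservation of $\theta$ gives $\L_{\underline\xi}\theta=0$ for every $\xi\in\g$, and since $\L$, $\iota$, and $\d$ all act coefficientwise on $V$-valued forms, the identity $\L_{\underline\xi}=\d\iota_{\underline\xi}+\iota_{\underline\xi}\d$ applies verbatim to $\theta$. Combining this with $\omega=-\d\theta$ yields $-\iota_{\underline\xi}\omega=\iota_{\underline\xi}\d\theta=-\d(\iota_{\underline\xi}\theta)=-\d\tilde\mu(\xi)$. This identifies $\underline\xi$ (up to the overall sign, consistent with the computation already carried out in Example~B, where the Hamiltonian vector field of $\theta(\bar\xi)$ is found to be $-\bar\xi$) as the Hamiltonian vector field of $\tilde\mu(\xi)$; in particular $\tilde\mu(\xi)\in C_H^\infty(M,V)$ and the lifting triangle in the definition of the weak comoment map commutes. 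This already establishes that $\mu_\theta$ is a moment map in the weak sense.

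For the strict conclusion that $\tilde\mu$ is a genuine comoment map, I would then verify $\tilde\mu([\xi,\eta])=\{\tilde\mu(\xi),\tilde\mu(\eta)\}$ by expanding $\{f,f'\}=-\omega(X_f,X_{f'})$, substituting the polysymplectic gradients obtained above, and invoking $\L_{\underline\xi}\theta=0$ to reduce to the standard identity relating $\iota_{[\underline\xi,\underline\eta]}\theta$ to the commutator of fundamental vector fields; the $V$-valued setting contributes nothing new here, as every step is $V$-linear. The main obstacle is the sign bookkeeping: one must simultaneously track the conventions $\omega=-\d\theta$ and $\d f=-\iota_{X_f}\omega$, together with the fact that $\xi\mapsto\underline\xi$ is a Lie algebra anti-homomorphism, and confirm that these conspire to reproduce the asserted formula rather than its negative. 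Apart from the signs, the proof is a direct application of Cartan's formula and carries no analytic difficulty, since $\theta$, $\omega$, and all the operators involved extend coefficientwise to $V$-valued forms.
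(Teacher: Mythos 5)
Your argument is essentially identical to the paper's: Cartan's formula plus $\L_{\underline\xi}\theta=0$ gives $-\iota_{\underline\xi}\omega=\d\bigl[-\theta(\underline\xi)\bigr]$, so each $\theta(\underline\xi)$ is Hamiltonian, and the bracket identity $\theta\big(\underline{[\xi,\eta]}\big)=-\L_{\underline\xi}\iota_{\underline\eta}\theta=\{\theta(\underline\xi),\theta(\underline\eta)\}$ is exactly the computation you outline (it again uses only invariance, Cartan's formula, and the fact that $\xi\mapsto\underline\xi$ is an anti-homomorphism). Your sign bookkeeping, including the observation that the polysymplectic gradient of $\theta(\underline\xi)$ comes out as $-\underline\xi$, matches the paper's Example B, so the proposal is correct and takes the same route; you need only write out the bracket verification you sketch in the last paragraph.
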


\begin{proof}
	Since $G$ preserves $\theta$, we have
	\[
		-\iota_{\underline\xi}(-\d\theta) = \L_{\underline\xi}\theta - \d\iota_{\underline\xi}\theta = \d\big[\!-\theta(\underline\xi)\big]
	\]
	for all $\xi\in\g$, and it follows that $-\theta(\underline\xi)\in C^\infty(M,V)$ is a Hamiltonian function for the vector field $\underline\xi\in\X(G)$. Since
	\begin{align*}
		&\theta\big(\underline{[\xi,\eta]}\big) = -\L_{\underline\xi}\iota_{\underline\eta}\theta = -\iota_{\underline\xi}\d\iota_{\underline\eta}\theta		\\
			&\hspace{1.1cm}=\iota_{\underline\xi}\iota_{\underline\eta}\d\theta = -\d\theta\big(\underline\xi,\underline\eta\big) = \{\theta(\underline\xi),\theta(\underline\eta)\},
	\end{align*}
	we deduce that the assignment $\underline\xi\mapsto\iota_{\underline\xi}\theta$ is a comoment map.
\end{proof}

We catalog the foregoing constructions beside their classical counterparts in the table below.
\begin{center}
\begin{tabular}{rccc}
							&classical						&$V$-valued							&							\\ \cline{1-3}
							&								&									&							\\ [-.3cm]
		symplectic form		&$\omega\in\Omega^2(M,\R)$		&$\omega\in\Omega^2(M,V)$			&							\\
		Hamiltonian function	&$f\in C^\infty(M)$				&$f\in C^\infty(M,V)$				&$\omega(\,\cdot,X_f)=df$	\\
		comoment map		&$\tilde\mu:\g\to C^\infty(M)$	&$\tilde\mu:\g\to C^\infty(M,V)$	&							\\
		moment map			&$\mu:M\to \g^*$					&$\mu:M\to \Hom(\g,V)$				&
\end{tabular}
\end{center}

\begin{definition}
	The \emph{coadjoint action} $\Ad^*:G\curvearrowright\Hom(\g,V)$ is given by
	\[
		(\Ad_g^* \alpha)(\xi) = \alpha(\Ad_g^{-1}\xi),
	\]
	for $g\in G$, $\alpha\in\Hom(\g,V)$, and $\xi\in\g$.
\end{definition}

\begin{lemma}\label{lem:vv_hamiltonian_system_facts}
	Let $(M,\omega,G,\mu)$ be a $V$-Hamiltonian system.
	\begin{enumerate}[i.]
		\item If $\alpha\in\Hom(\g,V)$, then the assignment
			\begin{align*}
				\mu+\alpha:	M	&\to	\Hom(\g,V)		\\
							x	&\to	\mu(x)+\alpha
			\end{align*}
		is a weak moment map. In particular, the set of weak moment maps compatible with $(M,\omega,G)$ is a $\Hom(\g,V)$-affine space.
		
		If additionally $\alpha$ vanishes on commutators $[\xi,\eta]\in\g$ ($\xi,\eta\in\g$) then $\alpha$ is a moment map. Consequently, the set of moment maps is a $[\g,\g]^0$-affine space, where $[\g,\g]^0$ denotes the annihilator of $[\g,\g]\subseteq\g$ in $\Hom(\g,V)$.
		\item If $G$ is connected, then the map $\mu:M\to\Hom(\g,V)$ is a moment map for the action of $G$ on $(M,\omega)$ precisely when
			\[
				\langle \mu_*X,\xi\rangle = \omega(X,\underline{\xi}_x)
			\]
			for all $x\in M$, $X\in T_xM$, and $\xi\in\g$. Here $\langle\,,\rangle:\Hom(\g,V)\times\g\to V$ denotes the natural pairing and $\underline{\xi}_x$ is the value of the action-induced vector field for $\xi$ at $x$.
		\item If $G$ is connected, then $\mu$ intertwines the action of $G$ on $M$ with the coadjoint action of $G$ on $\Hom(\g,V)$.
		\item If $H\subseteq G$ is a Lie subgroup, and if the map $\mu|_\h$ is given by
			\begin{align*}
				\mu|_\h:		M	&\to		\Hom(\h,V)		\\
							x	&\mapsto		\,\mu(x)|_\h,
			\end{align*}
		 then $(M,\omega,H,\mu|_\h)$ is a $V$-Hamiltonian system.
	\end{enumerate}
\end{lemma}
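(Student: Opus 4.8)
The plan is to dispatch the four parts in turn, each reducing to a short computation built on the defining relation $\d f=-\iota_{X_f}\omega$. For part (i), I would first note that for fixed $\alpha\in\Hom(\g,V)$ the function $\tilde\mu(\xi)+\alpha(\xi)$ differs from $\tilde\mu(\xi)$ by the constant $\alpha(\xi)$, whose differential vanishes; it therefore has the same Hamiltonian vector field $\underline\xi$, so $\mu+\alpha$ is again a weak moment map. Conversely, any two weak moment maps induce the same Hamiltonian vector field for each $\xi$, so their difference has vanishing differential and is thus constant on $M$ and linear in $\xi$, i.e.\ an element of $\Hom(\g,V)$; this yields the $\Hom(\g,V)$-affine structure. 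For the moment-map refinement I would compute $\{\tilde\mu(\xi)+\alpha(\xi),\,\tilde\mu(\eta)+\alpha(\eta)\}$: since the Hamiltonian vector field of a constant vanishes, the bracket equals $\{\tilde\mu(\xi),\tilde\mu(\eta)\}=\tilde\mu([\xi,\eta])$, so $\tilde\mu+\alpha$ is a Lie-algebra morphism precisely when $\alpha([\xi,\eta])=0$ for all $\xi,\eta\in\g$, that is, when $\alpha\in[\g,\g]^0$.

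For part (ii) I would set $\tilde\mu(\xi)=\mu(\cdot)(\xi)$ and differentiate, obtaining $\langle\mu_*X,\xi\rangle=\d\tilde\mu(\xi)_x(X)$ for $X\in T_xM$. The condition that $\tilde\mu(\xi)$ be Hamiltonian for $\underline\xi$ reads $\d\tilde\mu(\xi)=-\iota_{\underline\xi}\omega$, and pairing with $X$ gives exactly $\langle\mu_*X,\xi\rangle=\omega(X,\underline\xi_x)$; both implications are immediate from this single identity, and linearity in $\xi$ is automatic since $\mu(x)\in\Hom(\g,V)$. The hypothesis that $G$ be connected does not enter the differential identity itself—which is purely infinitesimal—but is what allows one to recover the moment map from this infinitesimal data, and it is used decisively in part (iii).

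For part (iii), the plan is to verify equivariance infinitesimally and then integrate. Differentiating $g\mapsto\mu(gx)$ and $g\mapsto\Ad_g^*\mu(x)$ at the identity in the direction $\eta$ and pairing with $\xi$ produces, via part (ii), the values $\omega(\underline\eta_x,\underline\xi_x)$ and $-\mu(x)([\eta,\xi])$ respectively. The morphism property of the comoment map then rewrites $\mu([\eta,\xi])=\tilde\mu([\eta,\xi])=\{\tilde\mu(\eta),\tilde\mu(\xi)\}=-\omega(\underline\eta_x,\underline\xi_x)$, so the two derivatives agree. Because $G$ is connected it is generated by the one-parameter subgroups $t\mapsto e^{t\eta}$, and the standard principle that two smooth maps agreeing at $e$ with equal derivatives along every such subgroup must coincide upgrades this to the global intertwining $\mu(gx)=\Ad_g^*\mu(x)$. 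I expect this integration step—together with keeping all signs consistent with the paper's conventions (the Remark's anti-morphism property of $f\mapsto X_f$ and the left-action normalization $\underline\xi_x=\tfrac{\d}{\d t}e^{t\xi}\cdot x|_{t=0}$)—to be the main obstacle.

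Finally, part (iv) is essentially functorial. For $\xi\in\h$ the fundamental vector field of the restricted $H$-action coincides with $\underline\xi$ for the $G$-action, so $\tilde\mu|_\h$ still lifts fundamental vector fields into $C_H^\infty(M,V)$; and the restriction of a Lie-algebra morphism to a subalgebra is again a morphism, so $\tilde\mu|_\h$ is a comoment map. Its associated moment map is exactly $x\mapsto\mu(x)|_\h$, whence $(M,\omega,H,\mu|_\h)$ is a $V$-Hamiltonian system.
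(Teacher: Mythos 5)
The paper offers no proof of this lemma---it explicitly omits the argument and refers to the classical symplectic literature---so there is no ``paper's approach'' to compare against; your proposal is precisely the standard classical argument transplanted to the $V$-valued setting, and parts (i), (iii), and (iv) are correct as sketched. Two small points: in (i), concluding that a function with vanishing differential is \emph{constant} requires $M$ connected (otherwise the difference of two weak moment maps is only locally constant); and in (iii) the ``standard principle'' you invoke is not literally one---agreement of derivatives at $e$ along one-parameter subgroups does not force two maps on $G$ to coincide. The correct packaging is to fix $x$ and $\eta$ and show that $t\mapsto \Ad^*_{e^{t\eta}}{}^{-1}\mu(e^{t\eta}x)$ has vanishing derivative for \emph{all} $t$ (the computation at time $t$ reduces to your infinitesimal one based at the point $e^{t\eta}x$), hence is constant; connectedness then extends this from the image of $\exp$ to all of $G$ via products of exponentials.

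The genuine gap is in part (ii). Your identity $\langle\mu_*X,\xi\rangle=\omega(X,\underline\xi_x)$ is exactly the statement that each $\tilde\mu(\xi)$ is a Hamiltonian function for $\underline\xi$, i.e.\ that $\tilde\mu$ is a \emph{weak} comoment map. The paper's definition of ``moment map'' additionally demands that $\tilde\mu$ be a morphism of Lie algebras, and this does \emph{not} follow from the displayed identity, even for connected $G$: for the translation action of $G=\R^2$ on $(\R^2,\d x\wedge\d y)$ the obvious $\mu$ satisfies the identity, yet $\{\tilde\mu(\xi),\tilde\mu(\eta)\}=-\omega(\underline\xi,\underline\eta)$ is a nonzero constant while $\tilde\mu([\xi,\eta])=0$. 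Your remark that connectedness ``allows one to recover the moment map from this infinitesimal data'' papers over this; no argument is supplied, and none can be, because the obstruction (the usual Lie-algebra cocycle $(\xi,\eta)\mapsto\{\tilde\mu(\xi),\tilde\mu(\eta)\}-\tilde\mu([\xi,\eta])$) can be nonzero. So either you must read ``moment map'' in (ii) as ``weak moment map''---in which case your proof is complete and this is the version actually used later in the proof of Theorem \ref{thm:vv_symplectic_reduction}---or the morphism property must be added as a separate hypothesis/verification. You should say explicitly which equivalence you are proving.
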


The proof of these assertions are so similar to their classical analogues that we omit the proofs and refer instead to the corresponding symplectic literature \cite{Cannas-da-Silva01,Meinrenken00}.

\begin{example}\label{eg:vv_hamiltonian_systems}
\begin{enumerate}[A.]
	\item The action of $G$ on $(M,\oplus_i\omega_i)$ is Hamiltonian if and only if it is Hamiltonian with respect to each $\omega_i$ for $i\leq N$. In this case, a moment map is given by $\oplus_i\mu_i:M\to(\g^*)^N\cong\Hom(\g,\R^N)$.
	\item Since the fundamental vector fields of the left regular representation of $G$ are the right invariant vector fields on $G$, Proposition \ref{prop:exact_symplectic_form_moment_map} implies that the map
	\[
		\mu : G \to \End\,\g
	\]
	given by
	\[
		\mu(g)(\xi) = \theta_g(\underline\xi_g) = (\lambda_{g^{-1}})_*(\rho_g)_*\xi = \Ad_g^{-1}\,\xi
	\]
	is a moment map for the left regular representation of $G$. Here we denote by $\lambda$ and $\rho$ the left and right regular representations, respectively.
	\item The induced action on $\Hom(TQ,V)$ of a subgroup $G\subseteq\Aut\,Q$ is Hamiltonian with canonical moment map $\mu:\Hom(TQ,V)\to V$ given by $\mu(\phi)(\xi) = \theta_\phi(\underline\xi_\phi) = \phi(\underline\xi_q)$ where $q=\pi\phi\in Q$.
\end{enumerate}
\end{example}

We recall the Arnold conjecture for compact classical symplectic manifolds $(M,\omega)$.

\begin{conjecture}[Arnold \cite{McDuffSalamon98}]
		A symplectomorphism that is generated by a time-dependent Hamiltonian vector field should have at least as many fixed points as a Morse function on the manifold must have critical points.
\end{conjecture}

From Example \ref{eg:vv_hamiltonian_systems} we deduce the following.

\begin{theorem}
	The Arnold conjecture fails in the $V$-symplectic setting.
\end{theorem}

\begin{proof}
	Consider the $\g$-symplectic manifold $(G,-\d\theta)$, where $G$ is a compact semisimple Lie group and $\theta\in\Omega^1(G,\g)$ is the Maurer-Cartan form. Let $T\subseteq G$ be a $1$-torus, let $\xi\in\g$ be a generator of $T$ with $\exp(\xi)=1$, and let $\bar\xi\in\X(G)$ be the right invariant extension of $\xi$. The $1$-periodic family of polysymplectomorphisms
	\begin{align*}
		\phi_t:	G	&\to			G						\\
				g	&\mapsto		e^{t\xi}g
	\end{align*}
	is generated by the Hamiltonian vector fields $t\bar\xi$. When $e^{t\xi}\neq 1$ the transformation $\phi_t$ is fixed-point free. However, any nondegenerate function on the compact space $G$ has at least two critical points.
\end{proof}


\subsection{$V$-Hamiltonian Reduction}

We begin with the fundamental theorem of $V$-Hamiltonian systems, first stated in \cite{Gunther87} and proved in \cite{MarreroRoman-RoySalgado15}.

\begin{theorem}[Vector-Valued Hamiltonian Reduction]\label{thm:vv_symplectic_reduction}
	Let $(M,\omega,G,\mu)$ be a $V$-Hamiltonian system and fix $\alpha\in\Hom(\g,V)$. If the stabilizer subgroup $G_\alpha$ of $\alpha$ under the coadjoint action is connected, and if $M_\alpha=\mu^{-1}(\alpha)/G_\alpha$ is smooth, then there is a unique $V$-valued $2$-form $\omega_\alpha\in\Omega^2(M_\alpha,V)$ such that
	\[
		\pi^*\omega_\alpha = i^*\omega,
	\]
	where $i:\mu^{-1}(\alpha)\hookrightarrow M$ is the inclusion and $\pi:\mu^{-1}(\alpha)\to M_\alpha$ is the projection. The form $\omega_\alpha$ is closed and is nondegenerate at $\pi x$ if and only if $\underline{\g_\alpha}_x = \underline\g_x^{\omega\omega}\cap\underline\g_x$.
\end{theorem}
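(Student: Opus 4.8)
The plan is to run the classical Marsden--Weinstein argument, substituting the polysymplectic orthogonal for the symplectic one and outsourcing the degeneracy bookkeeping to Theorem \ref{thm:linear_vv_symplectic_reduction}. Write $Z=\mu^{-1}(\alpha)$ and, for $x\in Z$, let $\underline\g_x=\{\underline\xi_x:\xi\in\g\}$ denote the tangent space to the $G$-orbit through $x$. The definition of a moment map gives $-\iota_{\underline\xi}\omega=\d\langle\mu,\xi\rangle$, whence $\langle\mu_*X,\xi\rangle=\omega(X,\underline\xi_x)$ for all $X\in T_xM$, and therefore $\ker\d\mu_x=\underline\g_x^\omega$. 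Reading the smoothness hypothesis as the clean assumption that $Z$ is an embedded submanifold with $T_xZ=\ker\d\mu_x$, this yields the identification $T_xZ=\underline\g_x^\omega$ that drives everything below.

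First I would produce $\omega_\alpha$ by checking that $i^*\omega$ is basic for $\pi:Z\to M_\alpha$. Since its comoment map is a Lie algebra morphism, $\mu$ is infinitesimally coadjoint-equivariant, and because $G_\alpha$ is connected it follows that $G_\alpha$ preserves $Z$; thus the fibres of $\pi$ are exactly the $G_\alpha$-orbits. Invariance of $i^*\omega$ under $G_\alpha\subseteq G$ is inherited from invariance of $\omega$. For horizontality, take $\xi\in\g_\alpha$ and $X\in T_xZ=\underline\g_x^\omega$; since $\underline\xi_x\in\underline\g_x$ we get $\omega(\underline\xi_x,X)=0$, so $\iota_{\underline\xi}\,i^*\omega=0$. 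Connectedness of $G_\alpha$ makes the fibres connected, so invariance together with horizontality renders $i^*\omega$ basic, and it descends to a unique $\omega_\alpha$ with $\pi^*\omega_\alpha=i^*\omega$; uniqueness is immediate from the injectivity of $\pi^*$ on forms. Closedness is then formal: $\pi^*\d\omega_\alpha=\d i^*\omega=i^*\d\omega=0$ forces $\d\omega_\alpha=0$.

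The crux, and the step I expect to be hardest, is nondegeneracy, which I would reduce to pointwise linear algebra by applying Theorem \ref{thm:linear_vv_symplectic_reduction} with $A=\underline\g_x$. The pivotal identity is
\[
	\underline{\g_\alpha}_x=\underline\g_x\cap\underline\g_x^\omega.
\]
To see it, a vector $\underline\eta_x\in\underline\g_x$ lies in $\underline\g_x^\omega$ iff $\omega(\underline\xi_x,\underline\eta_x)=0$ for all $\xi$, iff $\langle\mu_*\underline\eta_x,\xi\rangle=0$ for all $\xi$, iff $\mu_*\underline\eta_x=0$; and differentiating $\mu(g\cdot x)=\Ad^*_g\mu(x)$ along $\eta$ identifies $\mu_*\underline\eta_x$ with the infinitesimal coadjoint action of $\eta$ on $\alpha=\mu(x)$, which vanishes exactly when $\eta\in\g_\alpha$. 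Hence $A=\underline\g_x$ satisfies $A^\omega=T_xZ$ and $A\cap A^\omega=\underline{\g_\alpha}_x$, so the reduced space $A^\omega/(A\cap A^\omega)$ of Theorem \ref{thm:linear_vv_symplectic_reduction} is precisely $T_{\pi x}M_\alpha$ and the induced bilinear form is $(\omega_\alpha)_{\pi x}$.

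Theorem \ref{thm:linear_vv_symplectic_reduction} then identifies the kernel of $(\omega_\alpha)_{\pi x}$ as $(\underline\g_x^{\omega\omega}\cap\underline\g_x^\omega)/\underline{\g_\alpha}_x$, so $\omega_\alpha$ is nondegenerate at $\pi x$ precisely when $\underline\g_x^{\omega\omega}\cap\underline\g_x^\omega=\underline{\g_\alpha}_x$, which, in view of the identity above, is the asserted nondegeneracy criterion. I anticipate two genuine difficulties. First, justifying the equality $T_xZ=\ker\d\mu_x$ rather than mere inclusion: in the polysymplectic setting $\d\mu_x$ need not be surjective onto $\Hom(\g,V)$, so the usual notion of regular value must be handled with care, and this is really where the smoothness hypothesis does its work. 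Second, the orthogonal bookkeeping is subtler than in the symplectic case, since $A^{\omega\omega}\neq A$ in general; it is exactly the failure of $\underline\g_x^{\omega\omega}\cap\underline\g_x^\omega=\underline\g_x\cap\underline\g_x^\omega$ that produces a degenerate $\omega_\alpha$, so the argument must carry the double orthogonal $\underline\g_x^{\omega\omega}$ throughout and must not collapse it to $\underline\g_x$.
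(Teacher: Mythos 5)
Your proof is correct and follows essentially the same route as the paper: identify $T_x\mu^{-1}(\alpha)=\underline\g_x^\omega$ from the moment map condition, descend $i^*\omega$ along the $G_\alpha$-quotient (the paper is terser here, omitting the explicit invariance/horizontality check you supply), and feed $A=\underline\g_x$ into Theorem \ref{thm:linear_vv_symplectic_reduction} for the degeneracy analysis, using the identity $\underline{\g_\alpha}_x=\underline\g_x\cap\underline\g_x^\omega$. One remark on your final step: the criterion you derive, $\underline{\g_\alpha}_x=\underline\g_x^{\omega\omega}\cap\underline\g_x^\omega$, is the one that actually follows from Theorem \ref{thm:linear_vv_symplectic_reduction} and is the one consistent with the remark following the theorem (where at $\alpha=0$ it becomes $\underline\g_x=\underline\g_x^{\omega\omega}$); the condition $\underline{\g_\alpha}_x=\underline\g_x^{\omega\omega}\cap\underline\g_x$ printed in the statement collapses to $\underline{\g_\alpha}_x=\underline\g_x$ since $\underline\g_x\subseteq\underline\g_x^{\omega\omega}$, which is not the right condition (it would fail already for classical reduction at non-central $\alpha$), so your tacit identification of the two is in fact a correction of a typo rather than a gap.
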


\begin{proof}
	First note that the equivariance of $\mu$ ensures that the action of $G_\alpha$ preserves $\mu^{-1}(\alpha)$, and thus that the quotient $\mu^{-1}(\alpha)/G_\alpha$ exists as a topological space.

	Fix $x\in \mu^{-1}(\alpha)$. Lemma \ref{lem:vv_hamiltonian_system_facts} implies that
	\[
		X\in\underline\g_x^\omega		\; \iff\;	\omega(X,\underline{\g}_x) = \langle\mu_*X,\g\rangle=0		\;\iff\;	\mu_*X=0
	\]
	for all $X\in T_xM$, so that
	\[
		\underline\g_x^\omega = T_x\mu^{-1}(\alpha).
	\]
	Therefore, Theorem \ref{thm:linear_vv_symplectic_reduction} implies that $\omega_x$ descends to a bilinear form on
	\[
		T_x\mu^{-1}(\alpha)/\underline{\g_\alpha}_x = \underline{\g}_x^\omega/(\underline{\g}_x\cap\underline{\g}_x^\omega)
	\]
	with kernel $(\underline\g_x^{\omega\omega}\cap\underline\g_x)/\underline{\g_\alpha}_x$.

	Since $T_{\pi x}M_\alpha \cong T_x\mu^{-1}(\alpha)/\underline{\g_\alpha}_x$, we obtain a $2$-form $\omega_\alpha\in \Omega^2(M_\alpha,V)$ with $\pi^*\omega_\alpha = i^*\omega$.
	
	As $\pi$ is surjective, the induced map $\pi^*$ is injective and $\omega_\alpha$ is unique. Closedness follows by the injectivity of $\pi^*$ and the equality $\pi^*\d\omega_\alpha = \d\pi^*\omega_\alpha = 0$.
\end{proof}

We call $(M_\alpha,\omega_\alpha)$ the \emph{reduction} of $(M,\omega,G,\mu)$ at level $\alpha$, with \emph{reduced space} $M_\alpha$ and \emph{reduced $2$-form} $\omega_\alpha$.

\begin{remark}
	When $\alpha=0$, the distribution $\underline{\g_0} = \underline\g$ is isotropic along $\mu^{-1}(0)$ and the condition for the nondegeneracy of $\omega_0$ at $\pi x\in M_0$ becomes $\underline\g_x = \underline\g_x^{\omega\omega}$.
\end{remark}

\begin{remark}
	There is another approach to reduction, which is equivalent to ours in the classical case, but which diverges for more general coefficients $V$. Given a $V$-Hamiltonian system $(M,\omega,G,\mu)$ and a level $\alpha\in\Hom(\g,V)$, the restriction $i^*\omega$ of $\omega$ to $\mu^{-1}(\alpha)$ is a closed form. Now, the kernel distribution of any closed form $\sigma\in\Omega^*(M,V)$ is integrable, as $X,Y\in \ker\sigma$ implies that
	\[
		\iota_{[X,Y]}\sigma = (\L_X\iota_Y -\iota_Y\L_X)\sigma = (\L_X\iota_Y -\iota_Y\iota_X\d-\iota_Y\d\iota_X)\sigma = 0.
	\]
	When it is smooth, the leaf space $\tilde{M}_\alpha=\mu^{-1}(\alpha)/\ker i^*\omega$ naturally inherits a $V$-symplectic structure $(\tilde{M}_\alpha,\tilde\omega_\alpha)$, where $\tilde\omega_\alpha$ is the unique $2$-form on $\tilde{M}_\alpha$ satisfying $\tilde\pi^*\tilde\omega_\alpha = i^*\omega$. From the proof of Theorem \ref{thm:vv_symplectic_reduction}, it is apparent that $\tilde{M}_\alpha$ is a quotient of the reduced space $M_\alpha$. When the reduced $2$-form $\omega_\alpha$ is polysymplectic, as is always the case in the classical symplectic setting, the spaces $(\tilde{M}_\alpha,\tilde\omega_\alpha)$ and $(M_\alpha,\omega_\alpha)$ coincide.
	
	We refer to \cite{MarreroRoman-RoySalgado15} for further details.
\end{remark}

\begin{example}\label{eg:vv_symplectic_reduction}
\begin{enumerate}[A.]
	\item The reduced space of the $\R^N$-Hamiltonian system $(M,\oplus_i\omega_i,G,\oplus_i\mu_i)$ is the intersection of the reduction of each $(M,\omega_i,G,\mu_i)$. That is,
	\[
		M_\alpha = \Big(\bigcap_i\, \mu_i^{-1}(\alpha)\Big)/G_\alpha = \bigcap_i \big(\mu_i^{-1}(\alpha)/G_\alpha\big).
	\]
	\item Let $G$ be a Lie group with discrete center, let $\theta\in\Omega^1(G,\g)$ be the Maurer-Cartan form, and $H\subseteq G$ be a connected Lie subgroup of $G$. Since the left regular action of $H$ on $G$ is Hamiltonian with moment map $\mu=\Ad^{-1}|_\h:G\to\Hom(\h,\g)$. Since the adjoint representation acts by automorphisms, the preimage of $0\in\Hom(\h,\g)$ under $\mu$ is empty, and thus the reduced space $G_0$ is empty as well.
	
	Let us compute the reduction of $(G,-\d\theta)$ at the inclusion $i:\h\hookrightarrow\g$. We have
	\[
		g\in\mu^{-1}(i)	\,\; \iff\;\;	\forall{\xi\in\h}:\Ad_{g^{-1}}\,\xi = \xi	\;\;\iff\;\,	g\in C_G(H),
	\]
	where $C_G(H)$ is the centralizer of $H$ in $G$, and where the second equivalence follows as $H$ is connected. Denote by $H_i\subseteq H$ the stabilizer subgroup of $i$ under the coadjoint action of $H$ on $\Hom(\h,\g)$. It follows that
	\[
		h\in H_i \,\; \iff\;\;	\forall{\xi\in\h}:\Ad_h\,\xi = \xi	\;\;\iff\;\,	h\in Z(H),
	\]
	where $Z(H)\subseteq H$ is center of $H$. We conclude that the reduced space is
	\[
		G_i = \mu^{-1}(i)/H_i = C_G(H)/Z(H).
	\]
	Since $Z(H)$ is a central subgroup of $C_G(H)$, it follows that
	\[
		 G_i = C_G(H)/Z(H) = C_G(H) / \big(H \cap C_G(H)\big)
	\]
	as a normal quotient of groups. Let $\bar\theta_i\in\Omega^1(G_i,\mathfrak{c}_\g(\h)/\mathfrak{z}(\h))$ denote the Maurer-Cartan form on $G_i$. When $G_i$ has discrete center, $-\d\bar\theta_i$ is a $\mathfrak{c}_\g(\h)/\mathfrak{z}(\h)$-symplectic form, and is the image of a reduced potential $\theta_i\in\Omega^1(G_i,\g)$, i.e.\ $\omega_i=-\d\theta_i$, under a reduction of coefficients $f:\g\to\mathfrak{c}_\g(\h)/\mathfrak{z}(\h)$. In particular, the reduction $(G_i,\omega_i)$ is polysymplectic when $C_G(H)/Z(H)$ has discrete center.
	\item Using the fact that $(Q_B,\omega_B)$ and $(\SO(3),-\d\theta)$ are weakly symplectomorphic, it follows that the reduction of $(Q_B,\omega_B,T,\mu)$ at any level $\alpha\in\Hom(\t,\R^3)$ is either empty or a point.
	\item The reduction of $\Hom(TQ,V)$ by a subgroup $G\subseteq \Diff\,Q$ at level $0\in\Hom(\g,V)$ is naturally isomorphic to $\Hom\big(T(Q/G),V\big)$. In particular, the reduction is polysymplectic.
\end{enumerate}
\end{example}

\begin{remark}
	In contrast with the classical situation \cite{Atiyah82,GuilleminSternberg82,Kirwan84}, Example \ref{eg:vv_symplectic_reduction}.B.\ shows that the image of a moment map $\mu:M\to\Hom(\t,V)$ for the Hamiltonian action of a torus $T\subseteq G$ is not necessarily convex.
\end{remark}

\begin{proposition}\label{prop:Lagrangian_reduction}
	If $(M,\omega,G,\mu)$ is a $V$-Hamiltonian system, and if the reduced $2$-form $\omega_0$ vanishes on $M_0$, then the regular part of $\mu^{-1}(0)$ is a Lagrangian submanifold of $M$.
\end{proposition}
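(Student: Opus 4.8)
The plan is to reduce the statement to pointwise linear algebra and then invoke the linear theory. Since the coadjoint action fixes $0\in\Hom(\g,V)$, the stabilizer $G_0$ equals $G$ and $M_0=\mu^{-1}(0)/G$. Fix a point $x$ in the regular part of $\mu^{-1}(0)$, so that $\mu^{-1}(0)$ is a smooth submanifold near $x$ with $T_x\mu^{-1}(0)=\ker\mu_{*x}$. Exactly as in the proof of Theorem \ref{thm:vv_symplectic_reduction}, Lemma \ref{lem:vv_hamiltonian_system_facts} gives $X\in\underline\g_x^\omega\iff\langle\mu_*X,\g\rangle=0\iff\mu_*X=0$, whence
\[
	T_x\mu^{-1}(0)=\ker\mu_{*x}=\underline\g_x^\omega.
\]
It therefore suffices to prove that $\underline\g_x^\omega$ is Lagrangian, i.e.\ that $(\underline\g_x^\omega)^\omega=\underline\g_x^{\omega\omega}$ coincides with $\underline\g_x^\omega$.

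First I would read off isotropy from the hypothesis. Since $\omega_0$ vanishes identically on $M_0$, the defining identity $\pi^*\omega_0=i^*\omega$ forces $i^*\omega=0$; that is, $\omega$ restricts to the zero form on $\mu^{-1}(0)$. Hence $T_x\mu^{-1}(0)$ is isotropic, which is exactly the inclusion $\underline\g_x^\omega\subseteq\underline\g_x^{\omega\omega}$.

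Next I would supply the reverse inclusion, which holds automatically at level $0$. By equivariance, $\mu$ maps the orbit through $x$ into the coadjoint orbit of $0$, a single point; thus $\mu$ is constant along this orbit and $\mu_*\underline\xi_x=0$ for every $\xi\in\g$, so by Lemma \ref{lem:vv_hamiltonian_system_facts} we obtain $\omega(\underline\xi_x,\underline\eta_x)=0$. This is the isotropy of the orbit distribution along $\mu^{-1}(0)$ noted after Theorem \ref{thm:vv_symplectic_reduction}, namely $\underline\g_x\subseteq\underline\g_x^\omega$. Applying the inclusion-reversing property of Lemma \ref{lem:vv_symplectic_subspace_relations}(ii) to this inclusion yields $\underline\g_x^{\omega\omega}=(\underline\g_x^\omega)^\omega\subseteq\underline\g_x^\omega$. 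Combining the two inclusions gives $\underline\g_x^{\omega\omega}=\underline\g_x^\omega$, i.e.\ $(T_x\mu^{-1}(0))^\omega=T_x\mu^{-1}(0)$, so $T_x\mu^{-1}(0)$ is Lagrangian; as $x$ ranges over the regular part, the claim follows.

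The argument is light on ideas, so the main thing to get right is bookkeeping: one must fix the meaning of \emph{regular part} so that the identification $T_x\mu^{-1}(0)=\ker\mu_{*x}=\underline\g_x^\omega$ is legitimate, and one must correctly translate the global vanishing $\omega_0=0$ into the pointwise condition $i^*\omega=0$. Equivalently, and perhaps more efficiently, one can route the whole argument through the reduction theorem directly: by Corollary \ref{cor:vv_symplectic_vs_reduction} the kernel of $\omega_0$ at $\pi x$ is $\underline\g_x^{\omega\omega}/\underline\g_x$, so the hypothesis $\omega_0=0$ is exactly the statement $\underline\g_x^{\omega\omega}=\underline\g_x^\omega$, which is precisely the Lagrangian condition for $\underline\g_x^\omega=T_x\mu^{-1}(0)$.
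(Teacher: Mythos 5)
Your proof is correct and follows essentially the same route as the paper's: deduce isotropy of $T_x\mu^{-1}(0)$ from $\pi^*\omega_0=i^*\omega=0$, and coisotropy by applying the inclusion-reversing property of the polysymplectic orthogonal to $\underline{\g}_x\subseteq\underline{\g}_x^\omega=T_x\mu^{-1}(0)$. The extra bookkeeping about the regular part and the alternative phrasing via Corollary \ref{cor:vv_symplectic_vs_reduction} are fine but do not change the argument.
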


\begin{proof}
	Let $x\in\mu^{-1}(0)$. Since $\omega_x$ descends to zero on $T_x\mu^{-1}(0)/ \underline{\g}_x$ it follows that $\omega_x$ vanishes on $T_x\mu^{-1}(0)$, from which $T_x\mu^{-1}(0) \subseteq T_x\mu^{-1}(0)^\omega$. Taking the polysymplectic orthogonal of both sides of the inclusion
	\[
		\underline{\g}_x\subseteq \underline{\g}_x^\omega = T_x\mu^{-1}(0)
	\]
	yields
	\[
		T_x\mu^{-1}(0) = \underline{\g}_x^\omega\supseteq \underline{\g}_x^{\omega\omega} = T_x\mu^{-1}(0)^{\omega}.
	\]
	Thus, $T_x\mu^{-1}(0) = T_x\mu^{-1}(0)^\omega$.
\end{proof}

We complete this section with a result on the topology of the reduced space.

\begin{theorem}
	Let $(M,\omega,G,\mu)$ be a $V$-Hamiltonian system with compact Lie group $G$, and suppose that $\alpha\in\Hom(\g,V)$ is a regular value of the moment map $\mu:M\to\Hom(\g,V)$. Then the reduced space $M_\alpha$ has at most orbifold singularities.
\end{theorem}

\begin{proof}
	Since $\alpha$ is a regular value, $\mu^{-1}(\alpha)\subseteq M$ is a smooth manifold. Fix $x\in\mu^{-1}(\alpha)$ and let $G_x\subseteq G$ be the stabilizer subgroup of $x$, with Lie algebra $\g_x$. Since $\alpha$ is a regular value of $\mu$ it follows that $\mu_*T_xM=\Hom(\g,V)$ and thus
	\[
		\big\langle\Hom(\g,V),\g_x\big\rangle = \omega\big(\mu_*T_xM,\underline{\g_x}_x\big) = 0.
	\]
	Consequently, $\g_x=0$ and $G_x$ is discrete. We conclude that the stabilizer subgroup $(G_\alpha)_x\subseteq G_x$ is discrete as well and the quotient $\mu^{-1}(\alpha)/G_\alpha$ has at most orbifold singularities.
\end{proof}

We note that in the finite-dimensional classical symplectic situation the converse is also true: the level $\alpha\in\Hom(\g,V)$ is a regular value of $\mu$ if and only if $M_\alpha$ possesses an orbifold structure.


\section{Gauge Theory in Higher Dimensions}\label{sec:gauge_theory_in_higher_dimensions}

Atiyah and Bott observed \cite[Section 9]{AtiyahBott83} that, for a compact connected group $G$ with $\Ad$-invariant metric $\langle\,,\rangle_\g$ on the Lie algebra $\g$, the moduli space of flat connections on a $G$-principal bundle $P$ over a surface $\Sigma$ is obtained as the symplectic reduction of the space of connections $\A$ on $P$ by the action of the gauge group $\G = \Gamma(\Sigma,\Ad P)$, where $\Ad P = P\times_c G$ and $c:G\to \Aut\,G$ is the action of conjugation. More precisely, there is a natural symplectic structure on $\A$ given by
\[
	\omega_A(\alpha,\beta) = \int_\Sigma \langle\alpha\wedge\beta\rangle,
\]
where $A\in\A$, $\alpha,\beta\in\Omega^1(\Sigma,\ad P)\cong T_A\A$, $\ad P = P\times_{\mathrm{Ad}}\g$ is the adjoint bundle of $P$, and we define the operation
\[
	\langle\;\;\wedge\;\;\rangle:\Omega^1(\Sigma,\ad P)\times\Omega^1(\Sigma,\ad P) \xrightarrow{\wedge_{\Omega^1(\Sigma)}} \Omega^2(M,\ad P\otimes\ad P)
		\xrightarrow{\langle\,,\,\rangle_{\ad P} } \Omega^2(\Sigma).
\]
Here the metric $\langle\,,\rangle_{\ad P}$ on $\ad P$ is induced by the $\Ad$-invariant metric on $\g$. Writing $\gcal$ for the Lie algebra of $\G$, a moment map $\mu:\A\to\gcal^*$ for the induced action of $\G$ on $\A$ is given by
\[
	\mu(A)(f) = \int_\Sigma \langle F_A\wedge f\rangle,
\]
where $F_A\in\Omega^2(\Sigma,\ad P)$ is the curvature of $A\in\A$, and $f\in\Omega^0(\Sigma,\ad P)\cong\gcal$. The reduction of the Hamiltonian system $\big(\A,\omega,\G,\mu\big)$ at the level $0\in\gcal^*$ is the moduli space $\M(P)$ of flat connections on $P$.

The main result of this paper is that there is a similar polysymplectic characterization of the moduli space of flat connections over a higher dimensional manifold $M$. For clarity of exposition, we first examine the relatively simple case given by the first cohomology $\Omega^1(M)$ of $M$, which we equip with a formal $\Omega^2(M)/B^2(M)$-symplectic structure. We then proceed to establish the main result. We complete this section with an application of the polysymplectic reduction procedure to a family of degenerate $2$-forms arising from Chern-Weil theory.


\subsection{Cohomology as $\Omega^2(M)/B^2(M)$-Symplectic Reduction}\label{subsec:cohomology_as_symplectic_reduction}

The purpose of this subsection is to provide an accessible simplification of the gauge theoretic material in Subsection \ref{subsec:reduction_of_the_space_of_connections}. As our objective is to elucidate later developments, our presentation will be \emph{formal} in the sense that we will not address the subtleties that arise from the consideration of infinite dimensional manifolds. However, we note that this material is readily adapted to the setting of Banach manifolds, in precisely a manner analogous to the presentation in Subsection \ref{subsec:reduction_of_the_space_of_connections}. Indeed, with the exception that the Lie group $G=(\R,+)$ is not semisimple, we may regard this material as a special case of the theory of connections and principal bundles as it is treated in Subsection \ref{subsec:reduction_of_the_space_of_connections}.

It is noted in \cite{AtiyahBott83} that the vector space $\Omega^1(\Sigma)$ carries a natural symplectic structure: namely,
\[
	\omega(\alpha,\beta) = \int_\Sigma \alpha\wedge\beta,		\hspace{1cm}\alpha,\beta\in\Omega^1(\Sigma).
\]
Our present aim is to adapt this symplectic structure to the case in which $\dim M\geq 3$.

The most natural polysymplectic structure on $\Omega^1(M)$ is the wedge product $\wedge$. The following proposition establishes that $\wedge$ is indeed a $\Omega^2(M)$-symplectic structure on $\Omega^1(M)$.

\begin{proposition}
	Let $M$ be a manifold of dimension at least $2$. The wedge product
	\[
		\wedge:\Omega^1(M)\times\Omega^1(M) \to \Omega^2(M)
	\]
	is a formal $\Omega^2(M)$-symplectic structure on the vector space $\Omega^1(M)$.
\end{proposition}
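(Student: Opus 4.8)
The plan is to verify in turn the three defining requirements of a (formal) $V$-symplectic structure from Definition \ref{def:vv_symplectic_manifold}: that $\wedge$ is bilinear and alternating, that it is closed, and that it is nondegenerate, treating $\Omega^1(M)$ as a flat manifold modeled on itself. Bilinearity is immediate from the distributivity of the exterior product and its $\R$-homogeneity in each argument. The alternating property reduces to the elementary identity $\alpha\wedge\alpha = 0$ for a $1$-form $\alpha$, which is itself a consequence of the graded commutativity $\alpha\wedge\beta = -\beta\wedge\alpha$ of $1$-forms. For closedness, I would observe that the pairing $(\alpha,\beta)\mapsto\alpha\wedge\beta$ does not depend on the base point in the affine space $\Omega^1(M)$, so the associated $2$-form is translation-invariant, hence constant; a constant form is formally closed, which dispatches this requirement in the formal setting.

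The only substantive step is nondegeneracy, which I would establish by contraposition. Suppose $\alpha\in\Omega^1(M)$ is nonzero; then $\alpha_p\neq 0$ for some point $p\in M$. The hypothesis $\dim M\geq 2$ ensures $\dim T_p^*M\geq 2$, so I may select a covector $v\in T_p^*M$ linearly independent from $\alpha_p$ and extend it, via a bump function supported near $p$, to a global $1$-form $\beta\in\Omega^1(M)$ with $\beta_p = v$. The exterior-algebra fact that the wedge of two linearly independent covectors is nonzero then gives $(\alpha\wedge\beta)_p = \alpha_p\wedge v\neq 0$, so $\alpha\wedge\beta\neq 0$ as an element of $\Omega^2(M)$. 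Hence the interior product $\iota_\alpha\wedge$ (the map $\beta\mapsto\alpha\wedge\beta$) is nonzero, and the required implication $\iota_\alpha\wedge = 0\Rightarrow\alpha = 0$ follows.

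The role of the dimension hypothesis is precisely to guarantee the existence of the auxiliary covector $v$: when $\dim M = 1$ every pair of $1$-forms is pointwise proportional and all wedge products vanish, so the construction genuinely breaks down there. I do not anticipate any real obstacle, since all of the content is contained in the pointwise linear-algebra observation. The single point meriting a word of care is the passage from pointwise nonvanishing at $p$ to nonvanishing of the global form $\alpha\wedge\beta$, but this is automatic, as any section that is nonzero at a point is nonzero.
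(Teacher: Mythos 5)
Your proof is correct and follows essentially the same route as the paper: both dispose of bilinearity and skew-symmetry as immediate and reduce nondegeneracy to the pointwise fact that, in dimension at least $2$, a nonzero covector has nonzero wedge with some independent covector. The paper phrases this by contradiction in local coordinates, testing $\alpha$ against each $\d x^k$, whereas you argue contrapositively at a single point with a bump-function extension — a cosmetic difference (and your use of the bump function is, if anything, slightly more careful about passing from a locally defined test form to a global one); your remarks on closedness are harmless but not needed, since the statement concerns a linear polysymplectic structure on the vector space $\Omega^1(M)$.
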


\begin{proof}
	As $\wedge$ is clearly a skew-symmetric $\Omega^2(M)$-valued form on $\Omega^1(M)$, we have only to show that it is nondegenerate. Thus let $\alpha\in \Omega^1(M)$ and suppose that $\alpha\wedge\beta=0$ for all $\beta\in\Omega^1(M)$. Let $(x^i)_{i\leq n}$ ($n=\dim M$) be a system of coordinates on a neighborhood $U\subseteq M$, and let $\alpha_i\in C^\infty(U)$ be given by
	\[
		\alpha = \sum_i \alpha_i \,\d x^i.
	\]
	For each $k\leq n$,
	\[
		0 = \alpha \wedge \d x^k = \sum_i \alpha_i \,\d x^i \wedge \d x^k.
	\]
	Since $n\geq 2$, for each $i\leq n$ there is a $k\leq n$ with $k\neq i$, and thus $\d x^i\wedge\d x^k \neq 0$ so that $\alpha_i=0$. Since our choice of $U$ was arbitrary, we conclude that $\alpha = 0$.
\end{proof}

It turns out that this polysymplectic structure is too fine for our purposes, in the sense that the action of $C^\infty(M)$ on $\Omega^1(M)$ given by
\[
	f\cdot \alpha = \d f + \alpha
\]
is not generally Hamiltonian with respect to polysymplectic structure $\omega$ obtained by lifting $\wedge$ to the fibers of $T\Omega^1(M)$. The issue is resolved by reducing the space of coefficients from $\Omega^2(M)$ to $\Omega^2(M)/B^2(M)$, where $B^2(M)=\d\Omega^1(M)$ is the space of $2$-coboundaries on $M$.

We first establish a technical lemma.

\begin{lemma}\label{lem:nondegeneracy_of_wedge_in_higher_than_three_dimensions}
\begin{enumerate}[i.]
	\item \label{lem:nondegeneracy_of_wedge_in_higher_than_three_dimensions_i} Let $U$ be a vector space with $\dim U\geq 3$ and let $w\in\Lambda^2U$. If $u\wedge w = 0$ for all $u\in U$ then $w = 0$.
	\item \label{lem:nondegeneracy_of_wedge_in_higher_than_three_dimensions_ii} Let $M$ be a manifold with $\dim M\geq 3$. If $\theta\in\Omega^2(M)$ satisfies $\d(f\theta)=0$ for all $f\in C^\infty(M)$, then $\theta = 0$.
\end{enumerate}
\end{lemma}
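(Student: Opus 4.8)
The plan is to prove the two parts in sequence, using the first (a pointwise linear-algebra fact) to anchor the second (a global statement about $2$-forms).

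For part (\ref{lem:nondegeneracy_of_wedge_in_higher_than_three_dimensions_i}), I would work in a fixed basis $e_1,\ldots,e_n$ of $U$ (with $n=\dim U\geq 3$) and expand $w=\sum_{i<j} w_{ij}\,e_i\wedge e_j\in\Lambda^2 U$. The hypothesis is that $u\wedge w=0$ for every $u\in U$; it suffices to test this on the basis vectors. Computing $e_k\wedge w=\sum_{i<j}w_{ij}\,e_k\wedge e_i\wedge e_j$ and reading off the coefficient of a given basis $3$-vector $e_k\wedge e_i\wedge e_j$, one finds that each $w_{ij}$ appears in $e_k\wedge w$ for any $k\notin\{i,j\}$. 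Because $n\geq 3$, for every pair $i<j$ there exists such a third index $k$, so the vanishing of $e_k\wedge w$ forces $w_{ij}=0$. Hence $w=0$. The role of the dimension hypothesis is exactly to guarantee the existence of this third index; in dimension $2$ the statement is false since $\Lambda^2 U$ is spanned by a single nonzero element annihilated by all of $U$.

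For part (\ref{lem:nondegeneracy_of_wedge_in_higher_than_three_dimensions_ii}), I would first unwind the Leibniz rule to turn the global differential hypothesis into a pointwise one. For any $f\in C^\infty(M)$ and $\theta\in\Omega^2(M)$ we have
\[
	\d(f\theta) = \d f\wedge\theta + f\,\d\theta.
\]
Applying the hypothesis $\d(f\theta)=0$ with $f$ the constant function $1$ gives $\d\theta=0$, and substituting this back shows that $\d f\wedge\theta=0$ for \emph{every} $f\in C^\infty(M)$. Now fix a point $p\in M$. For any cotangent vector $\xi\in T_p^*M$, a standard bump-function argument produces an $f\in C^\infty(M)$ with $\d f_p=\xi$, so that $\xi\wedge\theta_p=0$. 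Since $\xi$ was arbitrary, the element $\theta_p\in\Lambda^2 T_p^*M$ is annihilated by wedging with every covector; as $\dim T_p^*M=\dim M\geq 3$, part (\ref{lem:nondegeneracy_of_wedge_in_higher_than_three_dimensions_i}) (applied to $U=T_p^*M$) forces $\theta_p=0$. Since $p$ was arbitrary, $\theta=0$.

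The main obstacle, such as it is, lies in part (\ref{lem:nondegeneracy_of_wedge_in_higher_than_three_dimensions_i}): one must track the combinatorics carefully enough to see that every coefficient $w_{ij}$ is detected by a suitable $e_k\wedge w$, and this is precisely where $\dim U\geq 3$ enters. Part (\ref{lem:nondegeneracy_of_wedge_in_higher_than_three_dimensions_ii}) is then essentially routine: the Leibniz identity separates the two summands, and the freedom to prescribe $\d f_p$ arbitrarily at a point reduces the claim to the pointwise fact just established. The only subtlety worth a remark is that the reduction to coefficients $\Omega^2(M)/B^2(M)$ in the surrounding text is exactly what this lemma underwrites, since $\d(f\theta)=0$ is the obstruction to well-definedness modulo coboundaries.
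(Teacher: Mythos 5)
Your proof is correct and follows essentially the same route as the paper's: the basis expansion with a third index $k\notin\{i,j\}$ for part (i), and the Leibniz rule with $f=1$ followed by a pointwise application of part (i) for part (ii). The only difference is that you make explicit (via a bump function) the step that every covector at $p$ is realized as $\d f_p$, which the paper leaves implicit.
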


\begin{proof}
\begin{enumerate}[i.]
	\item Fix a basis $\{e_i\}_{i\leq n}$ of $U$ and choose coefficients $w^{ij}\in\R$ so that
		\[
			w = \sum_{i,j\leq n} w^{ij}\, e_i\wedge e_j.
		\]
		For each $k\leq n$, we have
		\[
			0 = e_k\wedge\omega = \sum_{i,j\leq n} w^{ij} e_k\wedge e_i\wedge e_j.
		\]
		Since $n\geq 3$, for every pair of distinct $i,j\leq n$ we can find a $k\leq n$ with $k\neq i,j$. Consequently, $e_k\wedge e_i\wedge e_j\neq 0$ and thus $w^{ij}=0$.
	\item As $\d(1\cdot\theta) = 0$,
		\[
			\d f\wedge \theta = \d(f\theta) = 0
		\]
		for all $f\in C^\infty(M)$. Fix $p\in M$ and observe that $\alpha\wedge \theta_p = 0\in \Lambda^3(T_p^*M)$ for all $\alpha=\d f_p\in T_p^*M$. Now part \ref{lem:nondegeneracy_of_wedge_in_higher_than_three_dimensions_i} yields $\theta_p = 0$.
\end{enumerate}
\end{proof}

\begin{proposition}\label{prop:infinite_dimensional_vector_valued_symplectic_vector_space_toy_model}
	Let $M$ be a compact manifold of dimension at least $3$. The assignment
	\[
		\omega:\Omega^1(M)\times\Omega^1(M)\to\Omega^2(M)/B^2(M)
	\]
	defined by
	\[
		\omega(\alpha,\beta) = (\alpha\wedge\beta)_{\Omega^2/B^2},		\hspace{.8cm}\alpha,\beta\in\Omega^1(M),
	\]
	is a formal $\Omega^2(M)/B^2(M)$-symplectic structure on the vector space $\Omega^1(M)$.
\end{proposition}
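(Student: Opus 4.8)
The plan is to verify the bilinear and alternating properties directly, since these are routine, and then to concentrate essentially all of the work on nondegeneracy. Bilinearity of $\omega$ is inherited from the bilinearity of the wedge product together with the linearity of the quotient projection $\Omega^2(M)\to\Omega^2(M)/B^2(M)$. For the alternating property I would note that $\alpha\wedge\alpha=0$ already holds in $\Omega^2(M)$, so $\omega(\alpha,\alpha)=(\alpha\wedge\alpha)_{\Omega^2/B^2}=0$, whence skew-symmetry follows formally; none of this uses the dimension hypothesis.

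The substance of the proposition, and the step I expect to be the only genuine obstacle, is nondegeneracy. Suppose $\alpha\in\Omega^1(M)$ satisfies $\omega(\alpha,\beta)=0$ for every $\beta\in\Omega^1(M)$. Unwinding the definition of the quotient, this says precisely that $\alpha\wedge\beta\in B^2(M)$ — that is, $\alpha\wedge\beta$ is exact — for all $\beta$. The difficulty is that this is a \emph{global} cohomological hypothesis, whereas the desired conclusion $\alpha=0$ is a \emph{pointwise} statement, so the challenge is to find a way to localize the exactness condition into a condition that the earlier machinery can exploit.

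The key device I would use is to feed in test forms of the shape $\beta=f\gamma$ for arbitrary $f\in C^\infty(M)$ and $\gamma\in\Omega^1(M)$. Because $f$ is a function, $\alpha\wedge(f\gamma)=f\,(\alpha\wedge\gamma)$, so the hypothesis yields that $f\,(\alpha\wedge\gamma)$ is exact, hence closed, for every $f$. Setting $\theta=\alpha\wedge\gamma\in\Omega^2(M)$, this is exactly the statement $\d(f\theta)=0$ for all $f\in C^\infty(M)$, which is the hypothesis of Lemma \ref{lem:nondegeneracy_of_wedge_in_higher_than_three_dimensions}.\ref{lem:nondegeneracy_of_wedge_in_higher_than_three_dimensions_ii}; this is where the assumption $\dim M\geq 3$ enters. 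That lemma gives $\theta=\alpha\wedge\gamma=0$, and since $\gamma$ was arbitrary we obtain $\alpha\wedge\gamma=0$ for all $\gamma\in\Omega^1(M)$. At this point the nondegeneracy of the finer $\Omega^2(M)$-valued wedge product established in the preceding proposition (which only needs $\dim M\geq 2$) forces $\alpha=0$, completing the argument. I would remark that compactness of $M$ plays no essential role here, entering only through the ambient formal conventions rather than the logic of the proof.
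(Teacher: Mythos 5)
Your proposal is correct and follows essentially the same route as the paper: test against $f\gamma$, observe that $\alpha\wedge f\gamma=f\,(\alpha\wedge\gamma)$ being exact forces $\d\bigl(f\,(\alpha\wedge\gamma)\bigr)=0$ for all $f$, invoke Lemma \ref{lem:nondegeneracy_of_wedge_in_higher_than_three_dimensions}.\ref{lem:nondegeneracy_of_wedge_in_higher_than_three_dimensions_ii} to get $\alpha\wedge\gamma=0$, and finish with the nondegeneracy of the $\Omega^2(M)$-valued wedge product. The only difference is that you also spell out bilinearity and skew-symmetry, which the paper leaves implicit.
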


\begin{proof}
	Let $\alpha\in\Omega^1(M)$, and assume that $\alpha\wedge\gamma \in B^2(M)$ for all $\gamma\in \Omega^1(M)$. Let $\beta\in\Omega^1(M)$ and observe that
	\[
		\d(\alpha\wedge f\beta) = \d(f\;\alpha\wedge\beta) \in B^2(M)
	\]
	for all $f\in C^\infty(M)$. Thus Lemma \ref{lem:nondegeneracy_of_wedge_in_higher_than_three_dimensions} implies that $\alpha\wedge\beta = 0$. Since our choice of $\beta$ was arbitrary, the nondegeneracy of the wedge product on $\Omega^1(M)$ yields $\alpha = 0$.
\end{proof}


Let $\omega$ be the $\Omega^2(M)/B^2(M)$-valued $2$-form on $\Omega^1(M)$ be given by
\[
	\omega_A(\alpha,\beta) = (\alpha\wedge\beta)_{\Omega^2/B^2},\hspace{1cm}A\in\Omega^1(M),\;\alpha,\beta\in\Omega^1(M)\cong T_A\Omega^1(M),
\]
and let $C^\infty(M)$ act on $\Omega^1(M)$ as
\[
	f\cdot \alpha = \d f + \alpha.
\]
The aim of this section is to show that the polysymplectic reduction of $\big(\Omega^1(M),\omega\big)$ is the first cohomology $H^1(M)$ with the wedge product $\wedge_{H^1}$.

\begin{proposition}
	The space $\big(\Omega^1(M),\omega\big)$ is a formal $\Omega^2(M)/B^2(M)$-symplectic manifold.
\end{proposition}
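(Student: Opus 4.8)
The plan is to check the two requirements of Definition \ref{def:vv_symplectic_manifold} for the coefficient space $V=\Omega^2(M)/B^2(M)$, namely that $\omega$ is nondegenerate and closed. The organizing observation is that $\omega$ is a \emph{constant} $2$-form on the vector space $\Omega^1(M)$: under the canonical identification $T_A\Omega^1(M)\cong\Omega^1(M)$ available at every point $A$, the value $\omega_A(\alpha,\beta)=(\alpha\wedge\beta)_{\Omega^2/B^2}$ is independent of the base point $A$. This reduces each requirement either to a result already in hand or to an elementary property of translation-invariant forms.

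Nondegeneracy is immediate from Proposition \ref{prop:infinite_dimensional_vector_valued_symplectic_vector_space_toy_model}, which asserts precisely that the bilinear form $(\alpha,\beta)\mapsto(\alpha\wedge\beta)_{\Omega^2/B^2}$ is nondegenerate on $\Omega^1(M)$, so that $\iota_\alpha\omega=0$ forces $\alpha=0$. Since $\omega_A$ is this same form at every $A$, the pointwise nondegeneracy condition of Definition \ref{def:vv_symplectic_manifold} holds throughout $\Omega^1(M)$.

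For closedness I would evaluate $\d\omega$ on the constant vector fields determined by fixed elements $\alpha,\beta,\gamma\in\Omega^1(M)$, using the coordinate-free formula for the exterior derivative. These vector fields are translation-invariant, so they pairwise commute and all bracket terms drop out; moreover each function $\omega(\beta,\gamma)$ is constant on $\Omega^1(M)$, so every directional-derivative term vanishes as well. Hence $\d\omega(\alpha,\beta,\gamma)=0$, and since the constant vector fields span the tangent space at each point, we conclude $\d\omega=0$.

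The only genuine difficulty is one of analytic rigor rather than geometry: one must justify the exterior derivative calculus and the trivialization of $T\Omega^1(M)$ in the infinite-dimensional setting. In keeping with the formal convention announced at the start of this subsection, these subtleties are deferred; the passage to honest Banach manifolds runs parallel to the treatment in Subsection \ref{subsec:reduction_of_the_space_of_connections}.
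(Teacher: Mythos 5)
Your argument is correct and follows the paper's own proof: nondegeneracy is delegated to Proposition \ref{prop:infinite_dimensional_vector_valued_symplectic_vector_space_toy_model} applied fiberwise, and closedness follows from the constancy of $\omega$ under the canonical trivialization of $T\Omega^1(M)$. You simply spell out the standard constant-vector-field computation behind the closedness claim, which the paper states in one line.
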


\begin{proof}
	Closedness follows as $\omega$ is constant on $\Omega^2(M)$. For every $A\in\Omega^1(M)$, Proposition \ref{prop:infinite_dimensional_vector_valued_symplectic_vector_space_toy_model} ensures that the restriction of $\omega$ to the fiber $\Omega^1(M)\cong T_A\Omega^1(M)$ of the tangent bundle $T\Omega^1(M)$ is nondegenerate.
\end{proof}

\begin{theorem}
	The action of $C^\infty(M)$ on $\Omega^1(M)$, given by $f\cdot \alpha = \d f+\alpha$, is Hamiltonian with respect to $\omega$. A moment map
	\[
		\mu:\Omega^1(M)\to \Hom\big(C^\infty(M),\:\Omega^2(M)/B^2(M)\big)
	\]
	is given by
	\[
		\mu(A)(f) = (\d A\wedge f)_{\Omega^2/B^2}.
	\]
	The reduced space is $\big(H^1(M),\omega_0\big)$ where, for each cohomology class $\bar{A}=A+B^1(M)\in H^1(M)$, the $2$-form
	\[
		\omega_0 : T_{\bar A} H^2(M)\times T_{\bar A}H^2(M) \longrightarrow H^2(M)\subseteq \Omega^2(M)/B^2(M)
	\]
	is given by
	\[
		\omega_0\big(\bar\alpha_{\bar A},\bar\beta_{\bar A}\big) = \bar\alpha\wedge\bar\beta, \hspace{1cm}\bar\alpha,\bar\beta\in T_{\bar A}H^2(M) \cong H^2(M).
	\]
\end{theorem}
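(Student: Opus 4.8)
The plan is to establish the three claims in turn: that $\mu$ is a genuine moment map, that $\mu^{-1}(0)/C^\infty(M)=H^1(M)$, and that the reduced form is the cohomological wedge product. First I would record the infinitesimal action. Differentiating $t\mapsto tf\cdot A=A+t\,\d f$ at $t=0$ shows that the fundamental vector field of $f\in C^\infty(M)$ is the constant field $\underline f_A=\d f\in\Omega^1(M)\cong T_A\Omega^1(M)$. Since $C^\infty(M)$ is connected (and abelian), I may verify the moment map condition through the characterization of Lemma~\ref{lem:vv_hamiltonian_system_facts}, namely $\langle\mu_*X,f\rangle=\omega(X,\underline f_A)$ for all $A$, all $X\in T_A\Omega^1(M)\cong\Omega^1(M)$, and all $f\in C^\infty(M)$.

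Both sides reduce to classes in $\Omega^2(M)/B^2(M)$ after a short computation: differentiating $A\mapsto(\d A\wedge f)_{\Omega^2/B^2}$ gives $\langle\mu_*X,f\rangle=(f\,\d X)_{\Omega^2/B^2}$, whereas the right-hand side is $\omega(X,\d f)=(X\wedge\d f)_{\Omega^2/B^2}$. Their equality is exactly the Leibniz identity $\d(fX)=\d f\wedge X+f\,\d X$, which rearranges (using $\d f\wedge X=-X\wedge\d f$) to $f\,\d X-X\wedge\d f=\d(fX)\in B^2(M)$; hence the two classes coincide. This exhibits $\mu$ as a weak moment map, with $\tilde\mu(f)=\bigl[A\mapsto(f\,\d A)_{\Omega^2/B^2}\bigr]$ having Hamiltonian vector field $\underline f$. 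To upgrade it to a genuine moment map I would check that the comoment $\tilde\mu$ is a Lie algebra homomorphism; as $C^\infty(M)$ is abelian this reduces to $\{\tilde\mu(f),\tilde\mu(g)\}=0$, and indeed $\{\tilde\mu(f),\tilde\mu(g)\}=-\omega(\underline f,\underline g)=-(\d f\wedge\d g)_{\Omega^2/B^2}=0$ because $\d f\wedge\d g=\d(f\,\d g)\in B^2(M)$. Equivariance is automatic, since $\d\,\d f=0$ makes $\mu$ invariant and the coadjoint action of the abelian group is trivial.

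For the reduction I would first identify $\mu^{-1}(0)$. The condition $\mu(A)=0$ reads $f\,\d A\in B^2(M)$ for every $f$; applying $\d$ yields $\d f\wedge\d A=0$ for all $f$, and since $(\d f)_p$ exhausts $T_p^*M$ and $\dim M\geq 3$, Lemma~\ref{lem:nondegeneracy_of_wedge_in_higher_than_three_dimensions}(i) forces $(\d A)_p=0$ at every $p$; thus $\mu^{-1}(0)=Z^1(M)$, the space of closed $1$-forms. Because $C^\infty(M)$ is abelian the stabilizer $G_0$ is the whole group, whose orbit through $A$ is $A+B^1(M)$, so the reduced space is $Z^1(M)/B^1(M)=H^1(M)$, smooth by finite-dimensionality (here compactness of $M$ enters). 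Finally, Theorem~\ref{thm:vv_symplectic_reduction} characterizes $\omega_0$ by $\pi^*\omega_0=i^*\omega$; representing tangent vectors at $\bar A\in H^1(M)$ by closed forms $\alpha,\beta\in Z^1(M)$ gives $\omega_0(\bar\alpha,\bar\beta)=(\alpha\wedge\beta)_{\Omega^2/B^2}$. I would close by checking well-definedness and that the values are cohomological: replacing $\alpha$ by $\alpha+\d g$ alters the value by $(\d g\wedge\beta)_{\Omega^2/B^2}=(\d(g\beta))_{\Omega^2/B^2}=0$ (using $\d\beta=0$), and $\alpha\wedge\beta$ is closed, so its class lies in $Z^2(M)/B^2(M)=H^2(M)$; hence $\omega_0$ is the cup product $H^1(M)\times H^1(M)\to H^2(M)$. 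The main obstacle is precisely the identification $\mu^{-1}(0)=Z^1(M)$: one must pass from the quotient condition $f\,\d A\in B^2(M)$ to the pointwise vanishing of $\d A$, which is exactly where the hypothesis $\dim M\geq 3$ and the linear-algebra Lemma~\ref{lem:nondegeneracy_of_wedge_in_higher_than_three_dimensions} are needed.
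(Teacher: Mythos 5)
Your proof is correct and follows essentially the same route as the paper: the Leibniz identity $\d(f\alpha)=\d f\wedge\alpha+f\,\d\alpha$ verifies the moment map condition $\langle\mu_*X,f\rangle=\omega(X,\underline f_A)$, Lemma~\ref{lem:nondegeneracy_of_wedge_in_higher_than_three_dimensions} identifies $\mu^{-1}(0)$ with the closed $1$-forms, and the quotient by the orbits $A+B^1(M)$ yields $H^1(M)$. Your additional checks --- the vanishing of $\{\tilde\mu(f),\tilde\mu(g)\}$ and the well-definedness of $\omega_0$ on cohomology classes with values in $Z^2(M)/B^2(M)=H^2(M)$ --- are correct and supply details the paper leaves implicit; note also that the paper's final displayed line misprints the reduced space as $Z^2(M)/B^2(M)$ where you correctly obtain $Z^1(M)/B^1(M)=H^1(M)$.
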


\begin{proof}
	For $\alpha\in\Omega^1(M)$, $f\in C^\infty(M)$, the equality
	\[
		\d(\alpha\wedge f) = \d\alpha\wedge f - \alpha\wedge\d f
	\]
	implies that
	\[
		(\d\alpha\wedge f)_{\Omega^2/B^2} = (\alpha\wedge\d f)_{\Omega^2/B^2}.
	\]
	Since $\d:\Omega^1(M)\to\Omega^2(M)$ is linear, the induced map $\d_*:T\Omega^1(M)\to T\Omega^2(M)$ is given by
	\[
		\d_*\alpha_A = (\d\alpha)_{\d A} \in T_{\d A}\Omega^2(M)
	\]
	for every $A\in\Omega^1(M)$ and $\alpha\in\Omega^1(M)\cong T_A\Omega^1(M)$. Thus,
	\begin{align*}
		\big\langle \mu_*\alpha_A, f\big\rangle
			&=	(\d\alpha\wedge f)_{\Omega^2/B^2}			\\
			&=	(\alpha\wedge\d f)_{\Omega^2/B^2}			\\
			&=	\omega(\alpha_A,\underline{f}_A),
	\end{align*}
	and it follows that $\mu$ is a moment map for the action of $C^\infty(M)$ on $\Omega^1(M)$. Since
	\[
		\mu(A)=0 \iff\; \d A\wedge C^\infty(M) \in B^2(M)  \;\iff \d A = 0,
	\]
	we conclude that the reduced space is $\mu^{-1}(0)/C^\infty(M) = Z^2(M)/B^2(M) = H^2(M)$.
\end{proof}


\subsection{The Reduction of the Space of Connections}\label{subsec:reduction_of_the_space_of_connections}

Let $M$ be a compact connected manifold of dimension at least $3$, let $G$ be a compact matrix Lie group with $\Ad$-invariant metric $\langle\,,\rangle_\g$ on the Lie algebra $\g$, let $P$ be a $G$-principal bundle on $M$ with connected gauge group $\G=\Gamma(M,\Ad\,P)$, let $\gcal$ be the Lie algebra of $\G$, and let $\A$ be the $\Omega^1(M,\ad P)$-affine space of connections on $P$. Here we recall that $\Ad\,P = P\times_cG$, where $c:G\to\Aut\,G$ is the action of conjugation, and $\ad P= P\times_{\Ad}\g$. We refer to \cite{KobayashiNomizu96} for further details on principle bundles and connections.

We will follow the original paper of Atiyah and Bott \cite{AtiyahBott83} and employ the theory of Banach manifolds, in particular, Sobolev spaces, to properly address the infinite-dimensional spaces that we encounter. Thus, fix an integer $k>\frac{1}{2}\dim M+1$. For a fiber bundle $S\to M$, we denote by $\Gamma_k(M,S)$ the $W^{k,2}$ Sobolev completion of the space of smooth sections $\Gamma(M,S)$, and we write $\tilde\Gamma(M,S)$ for the Banach space of $C^1$ sections of $S$ equipped with the $C^1$ norm. The action of $\Omega^1(M,\ad P)$ on $\A$ yields a Hilbert manifold $\A_k = \A+\Omega_k^1(M,\ad P)$ modeled on $\Omega_k^1(M,\ad P)$ \cite{Uhlenbeck82}. Note that $\Gamma_k$ (resp.\ $\A_k$) is continuously embedded in $\tilde\Gamma$ (resp.\ $C^1$ connections on $P$). We refer to \cite{Palais68} for background on Sobolev spaces, and \cite{Lang62} for Hilbert and Banach manifolds more generally. For a very interesting treatment of classical symplectic manifolds in the Banach space setting, we recommend \cite{Marsden67}.

Our particular choice of spaces is motivated by the following result.

\begin{theorem}[$\!$\cite{MitterViallet81}]
	The group $\G_{k+1}$ is a smooth Hilbert Lie group with Lie algebra $\gcal_{k+1}$ canonically isomorphic to $\Omega_{k+1}^0(M,\ad P)$, the action of $\G_{k+1}$ on $\A_k$ is smooth, and the vector field induced by $f\in\Omega_{k+1}^0(M,\ad P)\cong\gcal_{k+1}$ corresponds to $\d_Af\in\Omega_k^1(M,\ad P)\cong T_A\A_k$ at $A\in\A_k$.
\end{theorem}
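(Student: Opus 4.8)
The plan is to obtain each of the four assertions from the general theory of Sobolev manifolds of sections over the compact base $M$, whose essential input is the multiplication and composition calculus of $W^{k,2}$-spaces. First I would recall that for a vector bundle $E\to M$ the completion $\Gamma_{k+1}(M,E)=W^{k+1,2}(M,E)$ is a Hilbert space, and that for a fiber bundle $S\to M$ whose fiber is a compact manifold the completion $\Gamma_{k+1}(M,S)$ is a Hilbert manifold modeled on the $W^{k+1,2}$-sections of its vertical tangent bundle. Applying this to $S=\Ad P$, whose fiber is the compact group $G$, endows $\G_{k+1}=\Gamma_{k+1}(M,\Ad P)$ with a Hilbert manifold structure, with charts near a section $g$ supplied by composing with the fiberwise exponential map of $G$. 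In particular the identity section corresponds to the zero section of $\ad P$, so that $T_{1}\G_{k+1}\cong\Gamma_{k+1}(M,\ad P)=\Omega^0_{k+1}(M,\ad P)$, which gives the canonical identification of the Lie algebra claimed in the statement.

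The crux is that every relevant operation is smooth, and this is exactly where the hypothesis $k>\frac{1}{2}\dim M+1$ enters. Writing $n=\dim M$, the Sobolev embedding $W^{k+1,2}\hookrightarrow C^1$ and the Sobolev multiplication theorem make $W^{k+1,2}(M)$ a Banach algebra once $k+1>n/2$; more decisively, the $\Omega$-lemma of Palais \cite{Palais68} guarantees that post-composition with a smooth fiber-preserving bundle map induces a smooth map of the corresponding spaces of $W^{k+1,2}$-sections under the same inequality. Since multiplication and inversion in $\G_{k+1}$ are pointwise applications of the smooth maps $G\times G\to G$ and $G\to G$, transported to $\Ad P$ through its conjugation structure, the $\Omega$-lemma shows these operations are smooth, so $\G_{k+1}$ is a smooth Hilbert Lie group. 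Reading off the group commutator in the exponential chart then identifies the bracket on $\Omega^0_{k+1}(M,\ad P)$ with the pointwise bracket inherited from $\g$.

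Next I would treat the action $\G_{k+1}\times\A_k\to\A_k$. In a local trivialization a gauge transformation $g$ sends a connection $A$ to $g^{-1}Ag+g^{-1}\,\d g$, so the action couples a term bilinear in $(g,A)$ with the term $g^{-1}\,\d g$, which costs one derivative of $g$. This loss of regularity is precisely why a $W^{k+1,2}$-gauge group must act on $W^{k,2}$-connections: one checks that the multiplication $W^{k+1,2}\cdot W^{k,2}\hookrightarrow W^{k,2}$ is a smooth bilinear map when $k>n/2$, so that both $g^{-1}Ag$ and $g^{-1}\,\d g$ lie in $\Omega^1_k(M,\ad P)$ and the resulting map is smooth. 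I expect this smoothness of the action, together with the attendant bookkeeping of Sobolev indices, to be the main obstacle: it is the step that genuinely forces the dimensional restriction and the deliberate pairing of the completions $\G_{k+1}$ and $\A_k$, whereas the Lie group structure is comparatively formal once the $\Omega$-lemma is available.

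Finally, to identify the induced vector field I would differentiate the flow. For $f\in\gcal_{k+1}$ and $A\in\A_k$ the curve $t\mapsto\exp(tf)\cdot A=e^{-tf}Ae^{tf}+e^{-tf}\,\d(e^{tf})$ has derivative at $t=0$ equal to $[A,f]+\d f=\d f+[A,f]=\d_A f$, where $\d_A$ is the covariant exterior derivative determined by $A$. Since this computation is pointwise and compatible with the trivializations used above, it patches to the global statement that the fundamental vector field of $f$ at $A$ is $\d_A f\in\Omega^1_k(M,\ad P)\cong T_A\A_k$, as asserted.
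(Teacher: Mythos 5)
Your proposal is correct and takes the same route as the literature: the paper offers no proof of this statement, citing it to Mitter--Viallet, and your argument --- Hilbert-manifold structure on Sobolev section spaces, Palais' $\Omega$-lemma for smoothness of the fiberwise group operations on $\G_{k+1}=\Gamma_{k+1}(M,\Ad P)$, the Sobolev multiplication $W^{k+1,2}\cdot W^{k,2}\subseteq W^{k,2}$ (valid since $k>\frac{1}{2}\dim M+1$) for smoothness of the action with its one-derivative loss in $g^{-1}\,\d g$, and differentiation of the local gauge formula at $t=0$ to obtain $\d f+[A,f]=\d_A f$ --- is essentially the standard proof found in that reference. One minor correction: what makes $\Gamma_{k+1}(M,S)$ a Hilbert manifold is compactness of the base $M$ together with $k+1>\frac{1}{2}\dim M$ (so that $W^{k+1,2}$-sections are continuous and charts via the fiberwise exponential make sense), not compactness of the fiber, though since the fiber here is the compact group $G$ this slip is harmless.
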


The manifold $\A_k$ may be covered by a single chart, so that the tangent bundle $T\A_k\to\A_k$ is globally isomorphic to $\pi_1:\A_k\times\Omega_k^1(M,\ad P)\to \A_k$ in the sense of Banach manifolds \cite[Chapter III]{Lang85}. Indeed, since $\A_k$ possesses a natural affine action of $\Omega_k^1(M,\ad P)$, the isomorphism $T\A\cong\A\times\Omega_k^1(M,\ad P)$ is canonical, and we denote by $\alpha_A\in T\A_k$ the tangent vector corresponding to $(A,\alpha)\in\A_k\times\Omega_k^1(M,\ad P)$. The invariant metric $\langle\,,\rangle_\g$ induces a metric $\langle\,,\rangle_{\ad P}$ on the fibers of $\ad P$, with which we define the bilinear map
\[
	\langle\;\wedge\;\rangle:\Omega^j(M,\ad P)\times\Omega^{2-j}(M,\ad P) \xrightarrow{\wedge_{\Omega^1(M)}} \Omega^2(M,\ad P\otimes\ad P)
		\xrightarrow{\langle\,,\,\rangle_{\ad P} } \Omega^2(M),
\]
for $j=1,2$.

\begin{lemma}\label{lem:smooth_induced_product}
	Let $j=1$ or $2$.
	\begin{enumerate}[i.]
		\item The function $\langle\;\wedge\;\rangle$ is continuous from $\Omega_k^j(M,\ad P)\times\Omega_k^{2-j}(M,\ad P)$ to $\tilde\Omega^2(M)$.
		\item The spaces $\tilde{Z}^2(M)$ and $\tilde{B}^2(M)$ are closed in $\tilde\Omega^2(M)$. In particular, $\tilde\Omega^2(M)/\tilde{B}^2(M)$ and $H^2(M)=\tilde{Z}^2(M)/\tilde{B}^2(M)$ are naturally Banach spaces.
		\item The composition
		\[
			\langle\;\wedge\;\rangle_{\tilde\Omega^2/\tilde{B}^2} : \Omega_k^j(M,\ad P)\times\Omega_k^{2-j}(M,\ad P)
				\xrightarrow{\langle\;\wedge\;\rangle} \tilde\Omega^2(M)
				\xrightarrow{[\;\;]_{\tilde\Omega^2/\tilde{B}^2}} \tilde\Omega^2(M)/\tilde{B}^2(M)
		\]
		is a smooth map of Banach spaces.
	\end{enumerate}
\end{lemma}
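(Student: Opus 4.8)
The plan is to establish the three assertions in order, drawing on two analytic inputs: the continuous Sobolev embedding $\Gamma_k\hookrightarrow\tilde\Gamma$ noted above (valid since $k>\tfrac12\dim M+1$), and Hodge theory on the compact manifold $M$. For part (i), I would first note that $\langle\,\wedge\,\rangle$ is bilinear, so that continuity is equivalent to a single boundedness estimate. The operation is pointwise: it wedges the form components and contracts the $\ad P$ components against the fixed smooth metric $\langle\,,\rangle_{\ad P}$. Hence for $C^1$ inputs the output is again $C^1$, and the Leibniz rule together with the compactness of $M$ (which bounds the metric and its first derivatives) yields $\|\langle\alpha\wedge\beta\rangle\|_{C^1}\le c\,\|\alpha\|_{C^1}\|\beta\|_{C^1}$ for a constant $c$ independent of $\alpha,\beta$. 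Composing with the embedding $\Gamma_k\hookrightarrow\tilde\Gamma$ replaces each $C^1$ norm on the right by the corresponding $W^{k,2}$ norm, giving the desired bound from $\Omega_k^j(M,\ad P)\times\Omega_k^{2-j}(M,\ad P)$ to $\tilde\Omega^2(M)$.

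For part (ii), closedness of $\tilde Z^2(M)$ is immediate: it is the kernel of the exterior derivative regarded as a bounded operator from $\tilde\Omega^2(M)$ into the space of continuous $3$-forms, boundedness holding because $\d$ involves only first derivatives of the coefficients. For $\tilde B^2(M)$ I would fix a Riemannian metric on $M$ and invoke the harmonic projection $H$ onto the finite-dimensional space $\mathcal H^2(M)$ of harmonic $2$-forms. Since $H$ is given by $L^2$-pairing against a fixed basis of smooth harmonic forms, and $C^1\hookrightarrow L^2$ on the compact $M$, the map $H$ is a bounded operator on $\tilde\Omega^2(M)$. The Hodge decomposition shows that a closed form is exact precisely when its harmonic part vanishes, so $\tilde B^2(M)=\tilde Z^2(M)\cap\ker H$, an intersection of two closed subspaces and hence closed. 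Finite-dimensionality of $\mathcal H^2(M)$ then identifies $H^2(M)=\tilde Z^2(M)/\tilde B^2(M)$ with $\mathcal H^2(M)$, and both $\tilde\Omega^2(M)/\tilde B^2(M)$ and $H^2(M)$ are Banach spaces as quotients of Banach spaces by closed subspaces.

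Part (iii) is then formal. By part (ii) the quotient projection $[\,\cdot\,]_{\tilde\Omega^2/\tilde B^2}$ is a bounded linear map, so its composition with the bounded bilinear map of part (i) is again bounded and bilinear; since every continuous bilinear map between Banach spaces is smooth, the composition $\langle\,\wedge\,\rangle_{\tilde\Omega^2/\tilde B^2}$ is smooth. I expect the only genuine obstacle to be the closedness of $\tilde B^2(M)$ in part (ii): the $C^1$ topology is not Hilbertian, so one cannot apply the $L^2$ Hodge decomposition directly and must instead interface the $C^1$ norm with $L^2$ through the embedding and the finite rank of $H$. The characterization $\tilde B^2(M)=\tilde Z^2(M)\cap\ker H$ is what makes the argument go through; the remaining steps are routine.
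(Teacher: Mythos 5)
Your proof is correct, but it takes a genuinely different route from the paper's in two places. In part (i) the paper invokes the Sobolev multiplication theorem (Palais, Theorem 9.10) to obtain continuity into $\Omega_k^2(M)$ and only then embeds into $\tilde\Omega^2(M)$, whereas you first push both factors through the embedding $\Omega_k^*\hookrightarrow\tilde\Omega^*$ and then multiply in $C^1$; your version is more elementary and uses the hypothesis $k>\frac{1}{2}\dim M+1$ only through the embedding itself. The substantive divergence is in part (ii): to prove that $\tilde{B}^2(M)$ is closed, the paper writes it as $\tilde{Z}^2(M)\cap\bigcap_\Sigma\ker\int_\Sigma$, where $\Sigma$ ranges over immersed surfaces representing integral second homology classes (Thom's representability theorem) and each period map $\int_\Sigma$ is continuous on $\tilde\Omega^2(M)$, exactness of a closed form being characterized by the vanishing of its periods; you instead write $\tilde{B}^2(M)=\tilde{Z}^2(M)\cap\ker H$ with $H$ the finite-rank harmonic projection, bounded because $C^1\hookrightarrow L^2$ on compact $M$. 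Both arguments rest on the same nontrivial point --- that a closed $C^1$ form satisfying the relevant vanishing condition really is the differential of a $1$-form of the regularity demanded by the definition of $\tilde{B}^2(M)$ --- which your version supplies via elliptic regularity for the Green's operator and the paper's via the de Rham theorem; you are right to flag this as the only genuine obstacle. Your approach buys the identification $H^2(M)\cong\mathcal{H}^2(M)$ for free and avoids any geometric-topological input, at the cost of fixing a Riemannian metric; the paper's argument is metric-free. Part (iii) is identical in both treatments.
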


\begin{proof}
	\begin{enumerate}[i.]
		\item Since $\langle\;\wedge\;\rangle$ is induced by a smooth map from $\Lambda^jT^*M\otimes\ad P\times \Lambda^{2-j}T^*M\otimes\ad P$ to $\Lambda^2 T^*M$, and since $k>\frac{1}{2}\dim M+1$, it follows that $\langle\,\wedge\,\rangle$ is a continuous map from $\Omega_k^j(M,\ad P)\times\Omega^{2-j}(M,\ad P)$ to $\Omega_k^2(M)$ \cite[Theorem 9.10]{Palais68}, which in turn is embedded in $\tilde\Omega^2(M)$.
		\item The continuity of $\d:\tilde\Omega^2(M)\to\Omega_{C^0}^3(M)$, where $\Omega_{C^0}^3$ denotes the space of continuous $3$-forms with the $C^0$ norm, implies the closedness of the kernel $\tilde{Z}^2(M)$. If $\Sigma\hookrightarrow M$ is a smoothly immersed surface, then the integration operator $\int_\Sigma:\tilde{Z}^2(M)\to\R$ is continuous and its kernel $K_\Sigma$ is closed in $\tilde{Z}^2(M)$, and hence in $\tilde\Omega^2(M)$. Since every integral second homology class is represented by some $\Sigma$ \cite{Thom54}, we conclude that $\tilde{Z}^2(M) = \cap_\Sigma K_\Sigma$ is closed.
		\item This follows from i.\ and ii., and from the fact that a continuous multilinear map of Banach spaces is smooth \cite{Lang62}.
	\end{enumerate}
\end{proof}

\begin{definition}\label{def:symplectic_form_on_space_of_connections}
	For each $A\in\A_k$, define
	\[
		\omega_A:T_A\A_k\times T_A\A_k \to \tilde\Omega^2(M)/\tilde{B}^2(M)
	\]
	by
	\[
		\omega_A(\alpha_A,\beta_A) = \langle\alpha\wedge\beta\rangle_{\tilde\Omega^2/\tilde{B}^2},
	\]
	where $\alpha_A\in T_A\A_k$ is the vector canonically associated to $\alpha\in\Omega_k^2(M,\ad P)$ by the affine action $\Omega_k^1(M,\ad P)\curvearrowright\A_k$.
\end{definition}

\begin{theorem}\label{thm:canonical_symplectic_structure_on_general_space_of_connections}
	If $\dim M\geq 3$, then $\omega$ is a smooth $\tilde\Omega^2(M)/\tilde{B}^2(M)$-symplectic structure on $\A_k$.
\end{theorem}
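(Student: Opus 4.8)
The plan is to verify the three defining conditions of a $V$-symplectic structure (Definition \ref{def:vv_symplectic_manifold}) with $V = \tilde\Omega^2(M)/\tilde B^2(M)$: that $\omega$ is smooth, closed, and fiberwise nondegenerate. Throughout I would exploit the fact recorded above that $\A_k$ is a single chart, so that the tangent bundle is canonically trivialized as $T\A_k \cong \A_k \times \Omega_k^1(M,\ad P)$ and the value $\omega_A(\alpha_A,\beta_A) = \langle\alpha\wedge\beta\rangle_{\tilde\Omega^2/\tilde B^2}$ is independent of the base point $A$.

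Smoothness is immediate from Lemma \ref{lem:smooth_induced_product}: part iii.\ asserts precisely that $\langle\;\wedge\;\rangle_{\tilde\Omega^2/\tilde B^2}$ is a smooth bilinear map of Banach spaces, and under the trivialization of $T\A_k$ the form $\omega$ is nothing but this constant bilinear assignment on the fibers. Closedness then follows exactly as in Subsection \ref{subsec:cohomology_as_symplectic_reduction}: since $\omega$ takes the same value on the constant vector fields associated to fixed $\alpha,\beta \in \Omega_k^1(M,\ad P)$, every directional-derivative term and every bracket term in the Koszul formula for $\d\omega$ vanishes, whence $\d\omega = 0$.

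The substance of the theorem is nondegeneracy, and here I would follow the pattern of Proposition \ref{prop:infinite_dimensional_vector_valued_symplectic_vector_space_toy_model}. Suppose $\iota_{\alpha_A}\omega = 0$, that is, $\langle\alpha\wedge\beta\rangle \in \tilde B^2(M)$ for every $\beta \in \Omega_k^1(M,\ad P)$. Fixing $\beta$ and replacing it by $f\beta$ for a scalar $f \in C^\infty(M)$, the $C^\infty(M)$-linearity of the wedge-and-pair operation gives $f\langle\alpha\wedge\beta\rangle \in \tilde B^2(M)$ for all $f$; since an exact form is closed, this yields $\d\big(f\langle\alpha\wedge\beta\rangle\big) = 0$, and Lemma \ref{lem:nondegeneracy_of_wedge_in_higher_than_three_dimensions} part ii.---whose pointwise proof applies verbatim to the $C^1$ form $\langle\alpha\wedge\beta\rangle \in \tilde\Omega^2(M)$ supplied by Lemma \ref{lem:smooth_induced_product} part i.---forces $\langle\alpha\wedge\beta\rangle = 0$ in $\tilde\Omega^2(M)$. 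This is precisely where the hypothesis $\dim M \geq 3$ is essential.

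It remains to deduce $\alpha = 0$ from $\langle\alpha\wedge\beta\rangle = 0$ for all $\beta$. This is a pointwise statement: using bump functions one may prescribe the value of $\beta$ at a point freely, so it suffices to show that the fiberwise pairing $(T_p^*M \otimes \ad P_p) \times (T_p^*M \otimes \ad P_p) \to \Lambda^2 T_p^*M$ is nondegenerate in its first slot. Writing $\alpha = \sum_a \alpha^a \otimes e_a$ and $\beta = \sum_b \beta^b \otimes e_b$ in a local orthonormal frame $\{e_a\}$ of $\ad P$, the pairing collapses to $\sum_a \alpha^a \wedge \beta^a$; choosing $\beta$ supported on a single index then reduces the claim to the nondegeneracy of the ordinary wedge $T_p^*M \times T_p^*M \to \Lambda^2 T_p^*M$, which holds since $\dim M \geq 2$. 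Hence every $\alpha^a$, and therefore $\alpha$, vanishes. I expect the only delicate point to be checking that the localization and the $C^1$ version of Lemma \ref{lem:nondegeneracy_of_wedge_in_higher_than_three_dimensions} part ii.\ interact correctly with the Sobolev and $C^1$ completions, but this is routine given Lemma \ref{lem:smooth_induced_product}.
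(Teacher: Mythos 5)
Your proof is correct, and the smoothness/closedness part coincides with the paper's (both rest on Lemma \ref{lem:smooth_induced_product} and the constancy of $\omega$ in the affine trivialization $T\A_k\cong\A_k\times\Omega^1_k(M,\ad P)$). Where you diverge is in the execution of the nondegeneracy step. The paper localizes: it takes a closed ball $N$ containing a point where $\alpha\neq 0$, trivializes $\ad P$ over $N$ so that $\Omega^1_k(N,\ad P)\cong\Omega^1_k(N)^{\dim\g}$, invokes Proposition \ref{prop:infinite_dimensional_vector_valued_symplectic_vector_space_toy_model} on $N$ to produce a witness $\beta$ with $\langle\alpha|_N\wedge\beta\rangle\notin\tilde B^2(N)$, and then extends $\beta$ to $M$. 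You instead run the underlying mechanism of Proposition \ref{prop:infinite_dimensional_vector_valued_symplectic_vector_space_toy_model} globally: the $f\beta$ substitution plus Lemma \ref{lem:nondegeneracy_of_wedge_in_higher_than_three_dimensions}.\ref{lem:nondegeneracy_of_wedge_in_higher_than_three_dimensions_ii} upgrades ``$\langle\alpha\wedge\beta\rangle\in\tilde B^2(M)$ for all $\beta$'' to ``$\langle\alpha\wedge\beta\rangle=0$ for all $\beta$,'' after which a pointwise orthonormal-frame computation in $\ad P$ reduces the fiberwise pairing to the scalar wedge product. The two arguments are close cousins, but yours trades the paper's extension-from-$N$ step (and the implicit fact that restriction carries $\tilde B^2(M)$ into $\tilde B^2(N)$) for an explicit fiberwise decomposition $\langle\alpha\wedge\beta\rangle=\sum_a\alpha^a\wedge\beta^a$; the paper's trivialization over the ball is doing essentially the same bookkeeping in disguise. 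Your version also makes visible exactly where $\dim M\geq 3$ enters (killing the $\tilde B^2$ ambiguity) versus where only $\dim M\geq 2$ is needed (pointwise nondegeneracy of $\wedge$), which the paper leaves implicit. The regularity caveats you flag ($C^1$ rather than smooth forms in Lemma \ref{lem:nondegeneracy_of_wedge_in_higher_than_three_dimensions}, and $\tilde B^2(M)\subseteq\tilde Z^2(M)$) are indeed routine and are glossed over by the paper as well.
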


\begin{proof}
	For each $A\in\A_k$, Lemma \ref{lem:smooth_induced_product} implies that $\omega_A$ is a continuous bilinear form on $T_A\A_k$ with coefficients in $\tilde\Omega^2(M)/\tilde{B}^2(M)$. The smoothness and closedness of $\omega$, as an assignment of bilinear forms to the tangent fibers of $\A_k$, follow from the fact that $\omega$ corresponds to the function with constant value $\langle\;\wedge\;\rangle_{\tilde\Omega^2/\tilde{B}^2}$ under the natural identifications. It remains to show that $\omega$ is nondegenerate. Fix $A\in\A_k$ and assume that $\alpha\in\Omega_k^1(M,\ad P)\cong T_A\A_k$ is nonzero at $x\in M$. Let $N\subseteq M$ be a closed ball containing $x$, so that $N$ is a submanifold with boundary and
	\[
		\Omega_k^1(N,\ad P) \cong \Omega_k^1(N,\g) \cong \underbrace{\Omega_k^1(N)\times\cdots\times\Omega_k^1(N)}_{\dim \g}.
	\]
	As $\alpha$ is nonzero on $N$, Proposition \ref{prop:infinite_dimensional_vector_valued_symplectic_vector_space_toy_model} provides a $\beta\in\Omega_k^1(N,\ad P)$ with
	\[
		\langle\alpha|_N\wedge\beta\rangle \notin \tilde{B}^2(N).
	\]
	Since $N$ is closed, we may smoothly extend $\beta$ to $M$, in which case
	\[
		\langle\alpha\wedge\beta\rangle \notin \tilde{B}^2(M),
	\]
	and we conclude that $\omega$ is nondegenerate.
\end{proof}

Given a connection $A\in\A$, the exterior covariant derivative $\d_A:\Omega^\ell(P,\g)\to\Omega^{\ell+1}(P,\g)$ is defined by
\[
	\d_A \sigma(X_1,\ldots,X_{\ell+1}) = \d\sigma(h_A X_1,\ldots,h_A X_{\ell+1}),	\hspace{.8cm}X_i\in T_uP,
\]
where $h_A:TP\to A$ is the fiberwise horizontal projection induced by the splitting $A\oplus VP$, for $VP$ the vertical tangent bundle of $P$. Since $\d_A$ preserves the subspace of tensorial forms of type $\ad\,G$, we also consider the exterior covariant derivative as a map $\d_A:\Omega^\ell(M,\ad P)\to\Omega^{\ell+1}(M,\ad P)$. The operations $\langle\;\wedge\;\rangle$ and $\d_A$ are related as follows \cite{AtiyahBott83},
\[
	\d\langle\alpha\wedge\beta\rangle = \langle\d_A\alpha\wedge\beta\rangle + (-1)^{\mathrm{deg}\,\alpha}\langle\alpha\wedge\d_A\beta\rangle.
\]

Since the Lie bracket $[\,,]_\g$ is $\Ad$-equivariant, there is an induced map $[\,,]$ on the fibers of $\ad P$, and we define the composition
\[
	[\;\wedge\;]:\Omega^1(M,\ad P)\times\Omega^1(M,\ad P) \xrightarrow{\wedge_{\Omega^1(M)}} \Omega^2(M,\ad P\otimes\ad P)
		\xrightarrow{[\,,\,]_{\ad P} } \Omega^2(M).
\]
By replacing $\langle\,,\rangle$ with $[\,,]$, the proof of Lemma \ref{lem:smooth_induced_product} shows that $[\;\wedge\;]$ is a continuous bilinear map from $\Omega_k^1(M,\ad P)\times\Omega_k^1(M,\ad P)$ to $\tilde\Omega^2(M)/\tilde{B}^2(M)$.

\begin{lemma}\label{lem:tangent_to_0}
	The map
	\[
		\phi:\Omega_k^1(M,\ad P)\to\Hom\big(\gcal_{k+1},\tilde\Omega^2(M)/\tilde{B}^2(M)\big)
	\]
	given by
	\[
		\phi(\alpha)(f)=\frac{1}{2}\big\langle[\alpha\wedge\alpha]\wedge f\big\rangle_{\tilde\Omega^2/\tilde{B}^2},	\hspace{.6cm}f\in\Omega_{k+1}^0(M,\ad P)\cong\gcal_{k+1},
	\]
	is tangent to $0$, that is, vanishes to first order at $0\in\Omega_k^1(M,\ad P)$.
\end{lemma}

\begin{proof}
	Since $[\;\wedge\;]$ and $\langle\;\wedge\;\rangle_{\tilde\Omega^2/\tilde{B}^2}$ are continuous and bilinear, it follows that the function
	\[
		\Phi:\Omega_k^1(M,\ad P)\times \Omega_k^1(M,\ad P)\to\Hom\big(\gcal_{k+1},\tilde\Omega^2(M)/\tilde{B}^2(M)\big),
	\]
	defined by
	\[
		\Phi(\alpha,\beta)(f) = \frac{1}{2}\big\langle[\alpha\wedge\beta]\wedge f\big\rangle_{\tilde\Omega^2/\tilde{B}^2},
	\]
	is continuous and bilinear. Consequently,
	\[
		\big\|\Phi(\alpha,\beta)\big\|_{k,2}\leq K\|\alpha\|_{k,2}\,\|\beta\|_{k,2}, \hspace{.8cm}\alpha,\beta\in\Omega_k^1(M,\ad P),
	\]
	for some $K>0$ \cite{Lang85}, so that $\|\phi(\alpha)\|\leq K\|\alpha\|^2$ and $\phi$ is tangent to $0$.
\end{proof}

\begin{lemma}\label{lem:smooth_moment_candidate}
	The function
	\[
		\mu: \A_k \to	 \Hom\big(\gcal_{k+1},\tilde\Omega^2(M)/\tilde{B}^2(M)\big),
	\]
	given by
	\[
		\mu(A)(f) = \langle F_A\wedge f\rangle_{\tilde\Omega^2/\tilde{B}^2},	\hspace{.6cm}f\in\Omega_{k+1}^0(M,\ad P)\cong\gcal_{k+1},
	\]
	is smooth. In particular, $\d\mu_A(\alpha)(f) = \langle\d_A\alpha\wedge f\rangle_{\tilde\Omega^2/\tilde{B}^2}$ for $\alpha\in \Omega_k^1(M,\ad P)\cong T_A\A_k$.
\end{lemma}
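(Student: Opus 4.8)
The plan is to reduce the statement to the affine structure of $\A_k$ together with the Cartan structure equation for the curvature. Fixing a smooth reference connection $A_0\in\A$, every $A\in\A_k$ is written uniquely as $A = A_0 + a$ with $a\in\Omega_k^1(M,\ad P)$, and the structure equation gives, for $\alpha\in\Omega_k^1(M,\ad P)\cong T_A\A_k$,
\[
	F_{A+t\alpha} = F_A + t\,\d_A\alpha + \tfrac{1}{2}t^2[\alpha\wedge\alpha].
\]
Pairing with $f\in\gcal_{k+1}$ and passing to $\tilde\Omega^2(M)/\tilde{B}^2(M)$ then expresses $\mu$, in the global chart $a\mapsto A_0 + a$, as the sum of a constant term $\langle F_{A_0}\wedge f\rangle_{\tilde\Omega^2/\tilde{B}^2}$, a term linear in $a$, and the quadratic term $\phi(a)(f)$ of Lemma \ref{lem:tangent_to_0}.

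I would first dispatch the constant and quadratic pieces. The constant piece is smooth trivially. The quadratic piece is $\Phi(a,a)$ in the notation of Lemma \ref{lem:tangent_to_0}; since $\Phi$ is continuous and bilinear into the Banach space $\Hom\big(\gcal_{k+1},\tilde\Omega^2(M)/\tilde{B}^2(M)\big)$, it is smooth, and its restriction to the diagonal inherits smoothness. The linear piece is the crux, and also where the regularity bookkeeping must be handled with care: written as $\langle\d_{A_0}a\wedge f\rangle$ it involves $\d_{A_0}a\in\Omega_{k-1}^2(M,\ad P)$, one Sobolev level below the hypotheses of Lemma \ref{lem:smooth_induced_product}. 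The resolution, and the reason the coefficients sit in a quotient by coboundaries while $f$ lives at the higher level $k+1$, is to integrate by parts inside $\tilde\Omega^2/\tilde{B}^2$. Using the Leibniz rule $\d\langle a\wedge f\rangle = \langle\d_{A_0}a\wedge f\rangle - \langle a\wedge\d_{A_0}f\rangle$, together with the fact that $\langle a\wedge f\rangle$ represents a coboundary, one obtains $\langle\d_{A_0}a\wedge f\rangle_{\tilde\Omega^2/\tilde{B}^2} = \langle a\wedge\d_{A_0}f\rangle_{\tilde\Omega^2/\tilde{B}^2}$, where now both $a$ and $\d_{A_0}f$ lie in $\Omega_k^1(M,\ad P)$. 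Lemma \ref{lem:smooth_induced_product} with $j=1$ then shows that $a\mapsto\langle a\wedge\d_{A_0}f\rangle_{\tilde\Omega^2/\tilde{B}^2}$ is continuous and linear, with operator norm controlled by $\|\d_{A_0}f\|_{k}\lesssim\|f\|_{k+1}$, hence a bounded, so smooth, map of $a$ into $\Hom\big(\gcal_{k+1},\tilde\Omega^2(M)/\tilde{B}^2(M)\big)$.

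Having written $\mu$ in a single chart as a sum of a constant, a bounded linear, and a continuous quadratic map of Banach spaces, I conclude that $\mu$ is smooth, invoking that continuous multilinear maps of Banach spaces are smooth \cite{Lang62}. Finally, the derivative is read directly off the structure-equation expansion: differentiating $\mu(A+t\alpha)(f)$ at $t=0$, the constant term drops, the quadratic term $t^2\phi(\alpha)(f)$ vanishes to first order by Lemma \ref{lem:tangent_to_0}, and what remains is
\[
	\d\mu_A(\alpha)(f) = \langle\d_A\alpha\wedge f\rangle_{\tilde\Omega^2/\tilde{B}^2}.
\]

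I expect the main obstacle to be precisely this regularity matching in the linear term: the naive expression $\langle\d_A\alpha\wedge f\rangle$ loses a derivative on $\alpha$ and falls outside the scope of Lemma \ref{lem:smooth_induced_product}. The integration by parts within the cohomological quotient — transferring the derivative onto the more regular argument $f$ so that every factor sits at Sobolev level $k$ — is the essential maneuver, and it is also what guarantees that the class $\langle F_A\wedge f\rangle_{\tilde\Omega^2/\tilde{B}^2}$ is well defined in the first place.
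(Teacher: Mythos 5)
Your proposal is correct and follows essentially the same route as the paper: both decompose $\mu$ via the curvature expansion $F_{A+\alpha}=F_A+\d_A\alpha+\tfrac{1}{2}[\alpha\wedge\alpha]$, handle the quadratic piece with Lemma \ref{lem:tangent_to_0}, and treat the linear piece by using $\d\langle\alpha\wedge f\rangle=\langle\d_A\alpha\wedge f\rangle-\langle\alpha\wedge\d_A f\rangle$ to rewrite it as $\langle\alpha\wedge\d_A f\rangle_{\tilde\Omega^2/\tilde{B}^2}$, to which Lemma \ref{lem:smooth_induced_product} applies. The only differences are presentational: you expand in a global chart about a reference connection and apply the Leibniz identity in the opposite direction (to repair the derivative loss on $\alpha$), whereas the paper starts from the manifestly smooth map $\alpha\mapsto\langle\alpha\wedge\underline{f}_A\rangle$ and converts it to $\langle\d_A\alpha\wedge f\rangle$.
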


\begin{proof}
	Fix $A\in\A_k$. As $\langle\;\wedge\;\rangle_{\tilde\Omega^2/\tilde{B}^2}$ and $\G_{k+1}\curvearrowright\A_k$ are smooth, it follows that
	\[
		\psi_A :\Omega_k^1(M,\ad P) \to \Hom\big(\gcal_{k+1},\tilde\Omega^2(M)/\tilde{B}^2(M)\big),
	\]
	where
	\[
		\psi_A(\alpha)(f) = \langle \alpha\wedge \underline{f}_A\rangle_{\tilde\Omega^2/\tilde{B}^2},
	\]
	is smooth. Since $\underline f_A = \d_A f$ and $\d \langle\alpha\wedge f\rangle = \langle\d_A\alpha \wedge f\rangle - \langle\alpha\wedge \d_A f\rangle$, we have
	\[
		\psi_A(\alpha)(f) = \langle\d_A\alpha, f\rangle_{\tilde\Omega^2/\tilde{B}^2}.
	\]
	In conjunction with the identity $F_{A+\alpha} = F_A + \d_A\alpha + \frac{1}{2}[\alpha\wedge\alpha]$ \cite[Lemma 4.5]{AtiyahBott83}, this yields
	\[
		\mu(A+\alpha) = \mu(A) + \psi_A(\alpha) + \phi(\alpha).
	\]
	Since Lemma \ref{lem:tangent_to_0} asserts that $\phi$ is tangent to $0$, and since $\psi_A$ is a continuous linear map, we conclude that $\mu$ is smooth at $A$, with derivative $\d\mu_A=\psi_A$.
\end{proof}

We now present our main result.

\begin{theorem}\label{thm:reduction_of_the_space_of_connections}
	Let $M$ be a compact manifold of dimension at least $3$, $G$ a compact matrix Lie group, $P$ a $G$-principal bundle on $M$ with connected gauge group $\G$, $\A$ the space of connections on $P$, and $k>\frac{1}{2}\dim M+1$ a fixed integer. Denote the the $W^{k,2}$ Sobolev completion of $\A$ by $\A_k$, and likewise for $\G$, $\gcal$, and $\Omega^*$, and write $\tilde\Omega^2(M)$ and $\tilde{B}^2(M)$ for the spaces of $C^1$ forms and coboundaries on $M$, respectively. The function
	\[
		\mu: \A_k \to	 \Hom\big(\gcal_{k+1},\tilde\Omega^2(M)/\tilde{B}^2(M)\big),
	\]
	given by
	\[
		\mu(A)(f) = \langle F_A\wedge f\rangle_{\tilde\Omega^2/\tilde{B}^2},	\hspace{.6cm} f\in\Omega_{k+1}^0(M,\ad P)\cong\gcal_{k+1},
	\]
	is a moment map for the action of $\G_{k+1}$ on $\A_k$ with respect to the polysymplectic structure $\omega\in\Omega^2\big(\A,\tilde\Omega^2(M)/\tilde{B}^2(M)\big)$ defined by
	\[
		\omega(\alpha,\beta) = \langle\alpha\wedge\beta\rangle_{\tilde\Omega^2/\tilde{B}^2},\hspace{.8cm}	\alpha,\beta\in \Omega_k^1(M,\ad P)\cong T_A\A_k.
	\]
	The reduced space $(\A_k)_0$ coincides with the moduli space of flat connections $\M_k=F^{-1}(0)/\G_{k+1}$ on $P$. On the smooth points of $\M_k$, the reduced $2$-form $\omega_0$ takes values in the second cohomology $H^2(M)$.
\end{theorem}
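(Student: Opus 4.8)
The plan is to verify the three assertions separately — the moment map property, the identification of the reduced space, and the $H^2(M)$-valuedness — reducing each to the pointwise nondegeneracy result Lemma \ref{lem:nondegeneracy_of_wedge_in_higher_than_three_dimensions}.i together with the Leibniz identity $\d\langle\alpha\wedge\beta\rangle = \langle\d_A\alpha\wedge\beta\rangle + (-1)^{\deg\alpha}\langle\alpha\wedge\d_A\beta\rangle$ relating $\langle\;\wedge\;\rangle$ to $\d_A$.

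First, for the moment map property, I would invoke Lemma \ref{lem:vv_hamiltonian_system_facts}.ii: since $\G_{k+1}$ is connected, it suffices to check $\langle\mu_*\alpha_A,f\rangle = \omega_A(\alpha_A,\underline f_A)$ for all $A\in\A_k$, $\alpha\in\Omega_k^1(M,\ad P)$, and $f\in\gcal_{k+1}$. This is essentially already contained in Lemma \ref{lem:smooth_moment_candidate}: that lemma computes $\d\mu_A(\alpha)(f) = \langle\d_A\alpha\wedge f\rangle_{\tilde\Omega^2/\tilde B^2}$, and since the fundamental vector field of $f$ is $\underline f_A = \d_A f$, the right-hand side is $\langle\alpha\wedge\d_A f\rangle_{\tilde\Omega^2/\tilde B^2}$. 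The Leibniz rule gives $\langle\d_A\alpha\wedge f\rangle - \langle\alpha\wedge\d_A f\rangle = \d\langle\alpha\wedge f\rangle$, which is exact, so the two sides agree modulo $\tilde B^2(M)$. This is a short computation.

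Second, for the reduced space, I would establish $\mu^{-1}(0) = F^{-1}(0)$. One inclusion is immediate. For the converse, suppose $\langle F_A\wedge f\rangle\in\tilde B^2(M)$ for all $f$; being a coboundary, each such $2$-form is closed, so the Leibniz rule together with the Bianchi identity $\d_A F_A = 0$ yields $\langle F_A\wedge\d_A f\rangle = 0$ as a $3$-form. Because a suitable choice of $f$ makes $(\d_A f)(x)$ an arbitrary element of $T_x^*M\otimes\ad P_x$, evaluating pointwise and applying Lemma \ref{lem:nondegeneracy_of_wedge_in_higher_than_three_dimensions}.i (which requires $\dim M\geq 3$) forces $F_A(x)=0$, so $A$ is flat. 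Since $0$ is fixed by the coadjoint action its stabilizer is all of $\G_{k+1}$, and Theorem \ref{thm:vv_symplectic_reduction} then gives $(\A_k)_0 = \mu^{-1}(0)/\G_{k+1} = F^{-1}(0)/\G_{k+1} = \M_k$.

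Third, for the $H^2(M)$-valuedness I would first identify the tangent space at a smooth point. By the same closedness-plus-pointwise argument — now using $\d_A^2 = 0$, valid since $A$ is flat — the condition $\d\mu_A(\alpha)=0$ is equivalent to $\d_A\alpha = 0$, so $T_A\mu^{-1}(0) = \ker(\d_A:\Omega_k^1\to\Omega_k^2)$. For $\alpha,\beta\in\ker\d_A$ the Leibniz rule gives $\d\langle\alpha\wedge\beta\rangle = \langle\d_A\alpha\wedge\beta\rangle - \langle\alpha\wedge\d_A\beta\rangle = 0$, so $\langle\alpha\wedge\beta\rangle$ is closed and its class lies in $H^2(M) = \tilde Z^2(M)/\tilde B^2(M)\subseteq\tilde\Omega^2(M)/\tilde B^2(M)$; since $\pi^*\omega_0 = i^*\omega$, the reduced form takes the value $\langle\alpha\wedge\beta\rangle_{\tilde\Omega^2/\tilde B^2}\in H^2(M)$. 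The hard part is the second step, the identification $\mu^{-1}(0)=F^{-1}(0)$: because the target is taken modulo $\tilde B^2(M)$ one cannot evaluate $\langle F_A\wedge f\rangle$ directly but must pass through closedness and Bianchi to reach a pointwise statement, and it is exactly here that $\dim M\geq 3$ enters through Lemma \ref{lem:nondegeneracy_of_wedge_in_higher_than_three_dimensions}.i. The analytic bookkeeping — that each manipulation lands in the closed subspace $\tilde B^2(M)$ — is routine given Lemmas \ref{lem:smooth_induced_product}--\ref{lem:smooth_moment_candidate}.
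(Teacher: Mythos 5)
Your proposal is correct and follows essentially the same route as the paper's proof: the moment map identity via Lemma \ref{lem:smooth_moment_candidate} together with the Leibniz rule for $\d_A$ and $\langle\;\wedge\;\rangle$, the identification $\mu^{-1}(0)=F^{-1}(0)$, and the closedness of $\langle\alpha\wedge\beta\rangle$ for $\alpha,\beta$ tangent to the zero level. Your Bianchi-plus-pointwise argument (via Lemma \ref{lem:nondegeneracy_of_wedge_in_higher_than_three_dimensions}) supplies the justification for $\mu(A)=0\Rightarrow F_A=0$ that the paper asserts without detail, and your conclusion that $\langle\alpha\wedge\beta\rangle$ lands in $\tilde{Z}^2(M)$ rather than $\tilde{B}^2(M)$ corrects what appears to be a typo in the paper's own proof.
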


\begin{proof}
	Fix $A\in\A_k$, $\alpha \in \Omega_k^1(M,\ad P)\cong T_A\A_k$ and $f\in \Omega_{k+1}^0(M,\ad P)\cong\bar\gcal_{k+1}$. The identities $\underline f_A = \d_A f$ and $\d \langle\alpha\wedge f\rangle = \langle\d_A\alpha \wedge f\rangle - \langle\alpha\wedge \d_A f\rangle$ yield
	\[
		\langle\d_A\alpha \wedge f\rangle_{\tilde\Omega^2/\tilde{B}^2} = \langle\alpha\wedge\underline{f}_A\rangle_{\tilde\Omega^2/\tilde{B}^2}.
	\]
	Applying Lemma \ref{lem:smooth_moment_candidate} to the left-hand side, and the definition of $\omega$ on the right-hand side, we obtain
	\[
		\d\mu_A(\alpha)(f) = \omega_A\big(\alpha,\underline{f}_A\big).
	\]
	Therefore, $\mu$ is a moment map. For any $A\in\A_k$,
	\[
		\mu(A)=0 \iff \;\big\langle F_A\,\wedge\,\Omega_{k+1}^0(M,\ad P)\big\rangle\subseteq\tilde{B}^2(M)\;  \iff F_A = 0,
	\]
	and we conclude that $\mu^{-1}(0)/\G_{k+1} = F^{-1}(0)/\G_{k+1}$.
	
	If $A\in\mu^{-1}(0)$ and $\alpha,\beta\in\Omega_k^1(M,\ad P)\cong T_A\A_k$ are tangent to $\mu^{-1}(0)$, then $\d_A\alpha = 0$ and $\langle\alpha\wedge\beta\rangle\in \tilde{B}^2(M)$. Consequently, the reduced form $\omega_0$ on $\M_k$ takes values in $\tilde{Z}^2(M)/\tilde{B}^2(M)= H^2(M)$.
\end{proof}

\begin{remark}
	Since Theorem \ref{thm:reduction_of_the_space_of_connections} admits arbitrarily large values of $k>\frac{1}{2}\dim M+1$, and since $\A_k$ and $\G_{k+1}$ are embedded in the H\"older spaces $\A_{C^\ell}$ and $\G_{C^{\ell+1}}$ ($\ell<k+\frac{1}{2}\dim M$), we expect an analogue of Theorem \ref{thm:reduction_of_the_space_of_connections} in the setting of the Fr\'echet manifolds $\A=\cap_\ell\A_{C^\ell}$ and $\G=\cap_\ell\G_{C^\ell}$.
\end{remark}

\begin{corollary}
	If $H^2(M)=0$, then the space of flat connections on $P$ is a Lagrangian submanifold of $\A_k$.
\end{corollary}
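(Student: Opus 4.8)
The plan is to obtain this as an immediate consequence of Proposition~\ref{prop:Lagrangian_reduction}, feeding it the cohomological description of the reduced form furnished by Theorem~\ref{thm:reduction_of_the_space_of_connections}. Recall that in that theorem the space of flat connections is exactly the level set $\mu^{-1}(0) = F^{-1}(0) \subseteq \A_k$, and that on the smooth points of the reduced space $\M_k = (\A_k)_0$ the reduced $2$-form $\omega_0$ takes values in $H^2(M)$. Proposition~\ref{prop:Lagrangian_reduction} asserts that whenever the reduced $2$-form at level $0$ of a $V$-Hamiltonian system vanishes, the regular part of the zero level set is Lagrangian; here the ambient $V$-symplectic manifold playing the role of the proposition's ``$M$'' is the Hilbert manifold $\A_k$, the group is $\G_{k+1}$, and the coefficients are $V = \tilde\Omega^2(M)/\tilde{B}^2(M)$.

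The one substantive observation is that the hypothesis $H^2(M) = 0$ forces $\omega_0$ to vanish identically wherever it is defined: since $\omega_0$ is valued in $H^2(M) = 0$ at every smooth point of $\M_k$, it is the zero form there. This is precisely the vanishing hypothesis of Proposition~\ref{prop:Lagrangian_reduction}, so invoking that proposition yields that the regular part of $\mu^{-1}(0) = F^{-1}(0)$ is a Lagrangian submanifold of $\A_k$, which is the asserted conclusion.

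The symplectic-geometric content thus reduces to a single citation; the care required lies instead in the bookkeeping. First, the space of flat connections need not be smooth at reducible connections, so the statement is properly understood on the regular part of $F^{-1}(0)$, exactly as formulated in the proposition. Second, one should confirm that Proposition~\ref{prop:Lagrangian_reduction}, stated for a general $V$-Hamiltonian system, applies verbatim to the infinite-dimensional system of Theorem~\ref{thm:reduction_of_the_space_of_connections}; this is the main point to verify, but it is immediate, since the proof of that proposition uses only the pointwise identity $\underline\g_x \subseteq \underline\g_x^\omega = T_x\mu^{-1}(0)$ together with the resulting equality $T_x\mu^{-1}(0) = T_x\mu^{-1}(0)^\omega$, all of which hold fiberwise on the tangent spaces of $\A_k$ and so transfer without change to the Banach setting.
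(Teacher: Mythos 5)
Your proposal is correct and follows exactly the paper's own argument: the hypothesis $H^2(M)=0$ forces the reduced form $\omega_0$ to vanish, Proposition~\ref{prop:Lagrangian_reduction} then gives that $\mu^{-1}(0)$ is Lagrangian, and the identification $F^{-1}(0)=\mu^{-1}(0)$ from Theorem~\ref{thm:reduction_of_the_space_of_connections} finishes the claim. Your additional remarks on the regular part and on the pointwise nature of the proposition's proof in the Banach setting are sensible but do not change the route.
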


\begin{proof}
	If the second cohomology $H^2(M)$ vanishes, then $\omega_0\in\Omega^2\big(\M_k,H^2(M)\big)$ is necessarily zero, and Proposition \ref{prop:Lagrangian_reduction} implies that $\mu^{-1}(0)$ is a Lagrangian submanifold of $\A_k$. The result follows as $F^{-1}(0)=\mu^{-1}(0)$.
\end{proof}

\begin{remark}
	There is a linear multisymplectic form $\omega$ of degree $n$ on the cohomology $\Omega^1(M)$ of an $n$-dimensional manifold $M$ given by
	\[
		\omega(\alpha_1,\ldots,\alpha_n) = \int_M \alpha_1\wedge\cdots\wedge\alpha_n,
	\]
	where the wedge of $n$ forms $\alpha_1,\ldots,\alpha_n\in\Omega^1(M,\ad P)$ is appropriately defined. Given a $G$-principal bundle $P$ as above, we obtain a multisymplectic form of degree $n$ on the space $\A$ of connections on $P$. This form was introduced in \cite{CalliesFregierRogers16}.
\end{remark}


\subsection{Characteristic Forms of Degree $2$ and Ricci Curvature}\label{subsec:characteristic_forms_and_Ricci_curvature}

Let $M$ be a compact manifold with $\dim M\geq 3$, $G$ a compact semisimple matrix Lie group, and $P$ a $G$-principal bundle on $M$. In this section, we apply the polysymplectic reduction procedure to a degenerate $\tilde\Omega^2(M)/\tilde{B}^2(M)$-valued $2$-form on the space of connections $\A_k$.

First we recall a result.

\begin{lemma}[$\!\!$\cite{KobayashiNomizu96}, Lemma II.5.5]\label{lem:vertical_derivative}
	Let $A\in\A$ be a connection, let $\eta\in\Omega^1(P,\g)$ be the connection $1$-form for $A$, and let $\alpha\in\Omega^1(P,\g)$ be a tensorial $1$-form of type $\ad\,G$. Then
	\[
		\d_A\alpha(X,Y) = \d\alpha(X,Y) + \frac{1}{2}\big[\alpha(X),\eta(Y)\big]_\g + \frac{1}{2}\big[\eta(X),\alpha(Y)\big]_\g,
	\]
	for $X,Y\in T_uP$, $u\in P$.
\end{lemma}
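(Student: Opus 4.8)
The plan is to verify the identity pointwise by exploiting the fact that both sides are tensorial $2$-forms in $(X,Y)$, so that the equality reduces to checking it on arguments that are each either horizontal or vertical. Concretely, since $\d_A\alpha$, $\d\alpha$, and the two bracket terms are each $C^\infty(P)$-bilinear and alternating in $X,Y$, I would decompose $X = h_AX + v_AX$ and $Y = h_AY + v_AY$ and use bilinearity to reduce to three cases: both arguments horizontal, one horizontal and one vertical, and both vertical. Throughout I would use that $\alpha$ is tensorial of type $\ad\,G$, so that $\alpha$ annihilates vertical vectors and satisfies $R_g^*\alpha = \Ad(g^{-1})\alpha$, together with the facts that the connection form $\eta$ annihilates horizontal vectors and restricts to the identity on fundamental vector fields.

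The horizontal--horizontal case is immediate: here $h_AX=X$, $h_AY=Y$, and $\eta(X)=\eta(Y)=0$, so both sides reduce to $\d\alpha(X,Y)$. The vertical--vertical case is equally quick: writing $X=\underline\xi$, $Y=\underline\zeta$ for $\xi,\zeta\in\g$, the left side vanishes because $h_A$ kills both arguments, while on the right every term vanishes since $\alpha(\underline\xi)=\alpha(\underline\zeta)=0$ and, using $[\underline\xi,\underline\zeta]=\underline{[\xi,\zeta]}$, the expansion of $\d\alpha(\underline\xi,\underline\zeta)$ collapses to zero.

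The mixed case is where the content lies. Taking $X$ horizontal and $Y=\underline\xi$, the left side vanishes since $h_AY=0$, so I must show that the right side $\d\alpha(X,\underline\xi)+\tfrac12[\alpha(X),\eta(\underline\xi)]$ vanishes, i.e.\ that $\d\alpha(X,\underline\xi)=-\tfrac12[\alpha(X),\xi]$. Because the expression is tensorial I am free to extend $X$ to a horizontal vector field invariant under the flow $R_{\exp(t\xi)}$ of $\underline\xi$, so that $[X,\underline\xi]=0$; the horizontal distribution is $R_g$-invariant, so such an extension exists. In the expansion of the exterior derivative the only surviving contribution is then the term $-\underline\xi\bigl(\alpha(X)\bigr)$, and differentiating the equivariance relation $R_{\exp(t\xi)}^*\alpha=\Ad(\exp(-t\xi))\alpha$ at $t=0$ yields $\underline\xi\bigl(\alpha(X)\bigr)=-[\xi,\alpha(X)]$. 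The overall coefficient $\tfrac12$ emerges from the normalization of $\d$ used in \cite{KobayashiNomizu96}, and the antisymmetry of the bracket converts the result into the claimed $-\tfrac12[\alpha(X),\xi]$, matching the left side. The remaining mixed case follows by antisymmetry.

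The main obstacle is bookkeeping rather than conceptual: one must track the normalization convention for the exterior derivative (the origin of the coefficients $\tfrac12$) and keep the signs in the equivariance computation consistent. Once the invariant extension is chosen so that the Lie-bracket term $\alpha([X,\underline\xi])$ drops out, the entire identity follows from a single differentiation of the $\ad$-equivariance of $\alpha$.
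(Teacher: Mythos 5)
The paper offers no proof of this lemma: it is quoted verbatim from Kobayashi--Nomizu (Lemma II.5.5) as a recalled result, so there is no internal argument to compare against. Your proof is correct and is essentially the standard textbook one: every term in the identity is alternating and $C^\infty(P)$-bilinear in $(X,Y)$, so the verification reduces to the horizontal--horizontal, vertical--vertical, and mixed cases; the first two are immediate from $\eta(\mathrm{horizontal})=0$, $\alpha(\mathrm{vertical})=0$, and $[\underline\xi,\underline\zeta]=\underline{[\xi,\zeta]}$, while the mixed case follows by extending the horizontal argument to an $R_{\exp(t\xi)}$-invariant horizontal lift (so the Lie-bracket term in $\d\alpha$ drops out) and differentiating the equivariance $R_{\exp(t\xi)}^*\alpha=\Ad(\exp(-t\xi))\,\alpha$ to get $\underline\xi(\alpha(X))=-[\xi,\alpha(X)]$. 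The one point you correctly flag deserves emphasis: the coefficients $\tfrac12$ are an artifact of the Kobayashi--Nomizu normalization $\d\alpha(X,Y)=\tfrac12\{X\alpha(Y)-Y\alpha(X)-\alpha([X,Y])\}$; with the unnormalized convention the same computation yields $\d\alpha(X,\underline\xi)=-[\alpha(X),\xi]$ and the bracket terms in the identity would carry coefficient $1$ instead of $\tfrac12$. This has no downstream effect in the paper, since the lemma is invoked only in Lemma \ref{lem:modified_connection_reduction_technical_lemma}, where the $\Ad^*$-invariant functional $\phi$ annihilates the bracket terms entirely, but it is worth stating explicitly which convention for $\d$ is in force when the formula is used.
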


We leverage this fact to establish the following.

\begin{lemma}\label{lem:modified_connection_reduction_technical_lemma}
	Suppose that $\alpha\in\Omega^1(M,\ad P)$. If $\phi\in\g^*$ is invariant under the coadjoint action of $G$, then
	\begin{enumerate}[i.]
		\item the assignment
			\begin{align*}
							\;\ad P\;			&\longrightarrow			\R			\\
							\big[(u,\xi)\big]	&\longmapsto				\phi(\xi)
			\end{align*}
			is well-defined and fiberwise linear. We denote this assignment, as well as the induced maps $\Omega^\ell(M,\ad P)\to\Omega^\ell(M)$, by $\phi$.
		\item $\phi(\d_A\alpha) = \d(\phi\alpha)$
		\item $\d(\phi\alpha\wedge\phi\beta) = \phi\,\d_A\alpha\wedge\phi\beta - \,\phi\alpha\wedge\phi\,\d_A\beta$
	\end{enumerate}
\end{lemma}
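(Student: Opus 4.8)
The plan is to handle the three parts in order, with part (ii) as the technical core and part (iii) following from it by a one-line Leibniz computation. The single observation that drives everything is that coadjoint invariance of $\phi$ forces $\phi$ to annihilate commutators.

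For part (i), I recall that $\ad P = P\times_{\Ad}\g$ is the quotient of $P\times\g$ by the relation $\big[(u,\xi)\big]=\big[(ug,\Ad_{g^{-1}}\xi)\big]$, so the assignment $\big[(u,\xi)\big]\mapsto\phi(\xi)$ is well-defined precisely when $\phi(\Ad_{g^{-1}}\xi)=\phi(\xi)$ for all $g\in G$ and $\xi\in\g$. By the definition of the coadjoint action, this is exactly the hypothesis $\Ad_g^*\phi=\phi$, and fiberwise linearity is immediate from the linearity of $\phi$ on $\g$. Differentiating the invariance identity along $g=\exp(-t\zeta)$ then yields the key fact
\[
	0 = \frac{\d}{\d t}\Big|_{t=0}\phi\big(\Ad_{\exp(-t\zeta)}\xi\big) = -\phi\big([\zeta,\xi]\big),
\]
so that $\phi$ vanishes on all brackets $[\zeta,\xi]\in\g$.

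For part (ii), I pass to the total space $P$, where $\alpha$ lifts to a tensorial $1$-form of type $\ad G$ with values in $\g$ and $\phi\circ\alpha$ is a basic form descending to $\phi\alpha$ on $M$. Applying Lemma \ref{lem:vertical_derivative} and composing with $\phi$ gives
\[
	\phi(\d_A\alpha)(X,Y) = \phi\big(\d\alpha(X,Y)\big) + \tfrac{1}{2}\phi\big([\alpha(X),\eta(Y)]\big) + \tfrac{1}{2}\phi\big([\eta(X),\alpha(Y)]\big).
\]
The two correction terms vanish by the bracket identity just established, so $\phi\circ\d_A\alpha = \phi\circ\d\alpha = \d(\phi\circ\alpha)$, the last equality holding since the constant linear map $\phi$ commutes with the ordinary exterior derivative. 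Descending to $M$ gives $\phi(\d_A\alpha)=\d(\phi\alpha)$.

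Part (iii) is then routine: since $\phi\alpha$ and $\phi\beta$ are ordinary $1$-forms on $M$, the Leibniz rule gives $\d(\phi\alpha\wedge\phi\beta)=\d(\phi\alpha)\wedge\phi\beta-\phi\alpha\wedge\d(\phi\beta)$, and substituting part (ii) into each factor produces the stated identity. I expect the only real content to lie in part (ii), and within it the recognition that $\Ad$-invariance of $\phi$ is equivalent (at the infinitesimal level) to $\phi$ killing commutators; this is precisely what allows the connection-dependent terms of $\d_A$ to drop out, reducing $\d_A$ to $\d$ after applying $\phi$. Everything else is bookkeeping with associated-bundle conventions and the standard sign in the Leibniz rule.
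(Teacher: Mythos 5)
Your proposal is correct and follows essentially the same route as the paper's proof: well-definedness from $\Ad^*$-invariance, the observation that invariance forces $\phi$ to annihilate commutators, an application of Lemma \ref{lem:vertical_derivative} so that the bracket correction terms die under $\phi$, and the Leibniz rule for part (iii). The only cosmetic difference is that you isolate the commutator-killing identity in part (i), whereas the paper derives it inside part (ii).
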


\begin{proof}
	\begin{enumerate}[i.]
		\item Suppose $(u,\xi)$ and $(u',\xi')\in P\times\g$ represent the same element in $\ad P=P\times_\Ad\g$, that is, there is a $g\in G$ with $u'=ug^{-1}$ and $\xi'=\Ad_g \xi$. Since $\phi$ is $\Ad^*$-invariant, we have $\phi(\xi)=\phi(\xi')$ and thus $\bar\phi$ is well defined.
		\item Let $\xi,\xi'\in\g$ be arbitrary and observe that
			\[
				\phi [\xi,\xi']_\g = \frac{\d}{\d t}\,\phi\:\Ad_{\exp(t\xi)} \xi'\big|_{t=0} = 0.
			\]
			Thus, by Lemma \ref{lem:vertical_derivative} and the linearity of $\phi$, we obtain
			\[
				\phi(\d_A\alpha) = \phi(\d\alpha) + \frac{1}{2}\phi[\alpha,\eta]_\g + \frac{1}{2}\phi[\eta,\alpha]_\g = \d(\phi\,\alpha),
			\]
			as required.
		\item By part (ii), we have
			\begin{align*}
				\d(\phi\alpha\wedge\phi\beta)
					&=	\d\phi\alpha\wedge\phi\beta - \phi\alpha\wedge\d\phi\beta					\\
					&=	\phi\,\d_A\alpha\wedge\phi\beta - \,\phi\alpha\wedge\phi\,\d_A\beta.
			\end{align*}
	\end{enumerate}	
\end{proof}

\begin{remark}
	For any connection $A\in\A$, the image of the curvature $F_A$ under the induced map $\phi:\Omega^2(M,\ad P)\to\Omega^2(M)$ represents in $H^2(M)$ the characteristic class corresponding to the $\Ad$-invariant $1$-linear map $\phi:\g\to\R$. That is, $[\phi F_A]_{H^2}$ is the image of $\phi:\g\to\R$ under the Chern-Weil homomorphism. We will call $\phi F_A$ the \emph{characteristic form} of $A$ associated to $\phi$.
\end{remark}

\begin{proposition}
	Suppose $\dim M\geq 3$.
	\begin{enumerate}[i.]
		\item The collection of maps
		\[
			(\omega_\phi)_A:T_A\A_k\times T_A\A_k \to \tilde\Omega^2(M)/\tilde{B}^2(M),		\hspace{.8cm}A\in A_k,
		\]
		given by
		\[
			(\omega_\phi)_A(\alpha,\beta) = (\phi\alpha \wedge \phi\beta)_{\Omega^2/B^2}, 	\hspace{.8cm}\alpha,\beta\in\Omega^1(M,\ad P)\cong T_A\A,
		\]
		defines a smooth closed $\tilde\Omega^2(M)/\tilde{B}^2(M)$-valued $2$-form on $\A_k$.
		\item The function
		\[
			\mu: \A_k \to	 \Hom\big(\gcal_{k+1},\tilde\Omega^2(M)/\tilde{B}^2(M)\big)
		\]
		defined by
		\[
			\mu(A)(f) = (\phi F_A\wedge\phi f)_{\tilde\Omega^2/\tilde{B}^2},	\hspace{.6cm}f\in\Omega_{k+1}^0(M,\ad P)\cong\gcal_{k+1}
		\]
		is smooth.
	\end{enumerate}
\end{proposition}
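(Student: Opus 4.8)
The plan is to treat both parts as $\phi$-decorated analogues of results already established for the canonical structure: part~(i) parallels Theorem~\ref{thm:canonical_symplectic_structure_on_general_space_of_connections} (dropping its nondegeneracy conclusion, which genuinely fails here), and part~(ii) parallels Lemma~\ref{lem:smooth_moment_candidate}. The only new ingredient is the induced fiberwise-linear map $\phi\colon\Omega^\ell(M,\ad P)\to\Omega^\ell(M)$ of Lemma~\ref{lem:modified_connection_reduction_technical_lemma}.i; since it arises from a smooth bundle morphism $\ad P\to M\times\R$, the composition estimate of \cite{Palais68} used in Lemma~\ref{lem:smooth_induced_product}.i shows $\phi$ to be continuous from $\Omega_k^\ell(M,\ad P)$ into $\Omega_k^\ell(M)\hookrightarrow\tilde\Omega^\ell(M)$.

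For part~(i), I would note that $\phi\alpha\wedge\phi\beta$ has exactly the shape of the pairing $\langle\alpha\wedge\beta\rangle$ of Lemma~\ref{lem:smooth_induced_product}, with the fiber metric $\langle\,,\rangle_{\ad P}$ replaced by the smooth fiberwise bilinear form $(\xi,\eta)\mapsto\phi(\xi)\phi(\eta)$ on $\ad P$. Since that lemma's proof uses only smoothness of the fiber pairing and the closedness of $\tilde{B}^2$, it applies verbatim to show that
\[
    (\alpha,\beta)\longmapsto(\phi\alpha\wedge\phi\beta)_{\tilde\Omega^2/\tilde{B}^2}
\]
is a continuous bilinear, hence smooth, map $\Omega_k^1(M,\ad P)\times\Omega_k^1(M,\ad P)\to\tilde\Omega^2(M)/\tilde{B}^2(M)$. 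Under the canonical trivialization $T\A_k\cong\A_k\times\Omega_k^1(M,\ad P)$ the form $\omega_\phi$ corresponds to this constant, $A$-independent bilinear form; exactly as in the proof of Theorem~\ref{thm:canonical_symplectic_structure_on_general_space_of_connections}, a constant-coefficient $2$-form on an affine Banach space is smooth, and closed because the directional-derivative and bracket terms in the exterior-derivative formula both vanish on constant vector fields. No nondegeneracy is asserted, so nothing more is needed.

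For part~(ii), the governing observation is that $\phi$ \emph{linearizes} the curvature. Fixing a base connection $A_0$ and writing $A=A_0+a$, the identity $F_A=F_{A_0}+\d_{A_0}a+\tfrac12[a\wedge a]$ gives, after applying $\phi$ and using both $\phi\,\d_{A_0}a=\d(\phi a)$ from Lemma~\ref{lem:modified_connection_reduction_technical_lemma}.ii and the fiberwise vanishing $\phi[\,\cdot,\cdot\,]=0$ noted in its proof,
\[
    \phi F_A=\phi F_{A_0}+\d(\phi a),
\]
the quadratic term $\tfrac12\phi[a\wedge a]$ dropping out entirely. Hence $\mu(A)=\mu(A_0)+T(a)$ is \emph{affine} in $A$, with linear part $T(a)(f)=(\d(\phi a)\wedge\phi f)_{\tilde\Omega^2/\tilde{B}^2}$; in particular the tangent-to-$0$ device of Lemma~\ref{lem:tangent_to_0} that was needed in Lemma~\ref{lem:smooth_moment_candidate} is here unnecessary, and smoothness of $\mu$ reduces to boundedness of the single linear map $T$.

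The crux is the regularity bookkeeping inside $T$. As written, $\d(\phi a)$ costs one derivative and lies only in $W^{k-1,2}$, outside the range in which Lemma~\ref{lem:smooth_induced_product} supplies a continuous pairing, so I would integrate by parts modulo coboundaries, just as in Lemma~\ref{lem:smooth_moment_candidate}: the Leibniz rule for $\d$ applied to the real forms $\phi a$ and $\phi f$ gives $(\d(\phi a)\wedge\phi f)_{\tilde\Omega^2/\tilde{B}^2}=(\phi a\wedge\d(\phi f))_{\tilde\Omega^2/\tilde{B}^2}$, and now both $\phi a\in\Omega_k^1(M)$ and $\d(\phi f)\in\Omega_k^1(M)$ (using $f\in\Omega_{k+1}^0$) are of class $W^{k,2}$, squarely inside the continuous regime of part~(i). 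This exhibits $T$ as bounded, so $\mu$ is affine with bounded linear part and hence smooth, with constant derivative $\d\mu_A(\alpha)(f)=(\phi\alpha\wedge\d(\phi f))_{\tilde\Omega^2/\tilde{B}^2}$. This single integration by parts, together with the one-order-higher completion $\gcal_{k+1}$ that absorbs the derivative onto $f$, is the only place where the hypotheses $\dim M\ge 3$ and $k>\tfrac12\dim M+1$ do genuine work.
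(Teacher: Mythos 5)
Your proposal is correct, and part~(i) follows the paper's route exactly: the paper's proof simply says to substitute the fiber pairing $\phi\otimes\phi$ for $\langle\,,\rangle_{\ad P}$ in Lemma~\ref{lem:smooth_induced_product} and Theorem~\ref{thm:canonical_symplectic_structure_on_general_space_of_connections}, which is precisely your argument. For part~(ii), however, you take a genuinely different and cleaner path. The paper again argues by substitution into Lemma~\ref{lem:smooth_moment_candidate}, which carries along the decomposition $\mu(A+\alpha)=\mu(A)+\psi_A(\alpha)+\phi(\alpha)$ and the tangent-to-zero estimate of Lemma~\ref{lem:tangent_to_0} for the quadratic remainder. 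You instead observe that $\Ad^*$-invariance forces $\phi[\,\cdot,\cdot\,]=0$, so the quadratic term $\tfrac12\phi[\alpha\wedge\alpha]$ in the curvature expansion vanishes identically and $\mu_\phi$ is \emph{affine}; smoothness then reduces to boundedness of one linear map, handled by the same integration by parts modulo $\tilde B^2(M)$ that the paper uses. This buys a shorter argument and an $A$-independent derivative formula, at the cost of being specific to $\Ad$-invariant linear $\phi$ (the paper's template also covers the quadratic pairing $\langle\,,\rangle_\g$, where the remainder genuinely survives). One small inaccuracy in your closing remark: the hypothesis $\dim M\geq 3$ does no work in this proposition, which asserts only smoothness and closedness; it is needed only for the nondegeneracy statements elsewhere (Lemma~\ref{lem:nondegeneracy_of_wedge_in_higher_than_three_dimensions} and Theorem~\ref{thm:canonical_symplectic_structure_on_general_space_of_connections}), while $k>\tfrac12\dim M+1$ is what drives the Sobolev multiplication estimates in both parts, not just in the integration by parts.
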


\begin{proof}
	The proof analogous to those of Lemma \ref{lem:smooth_induced_product}, Theorem \ref{thm:canonical_symplectic_structure_on_general_space_of_connections}, and Lemma \ref{lem:smooth_moment_candidate}.   Specifically, we replace $\langle\;\wedge\;\rangle$ with $\phi\wedge\phi$ in Lemma \ref{lem:smooth_induced_product}, $\langle\;\wedge\;\rangle_{\tilde\Omega^2/\tilde{B}^2}$ with $(\phi\wedge\phi)_{\tilde\Omega^2/\tilde{B}^2}$ in the corresponding part of Theorem \ref{thm:canonical_symplectic_structure_on_general_space_of_connections}, and $\langle F\wedge\;\rangle$ with $\phi F\wedge\phi$ in Lemma \ref{lem:smooth_moment_candidate}.
\end{proof}
	
\begin{theorem}\label{thm:modified_connection_reduction}
	The action of $\G_{k+1}$ on $(\A_k,\omega_\phi)$ is Hamiltonian, the function
	\[
		\mu_\phi : \A_k \to \Hom\big(\gcal_{k+1},\tilde\Omega^2(M)/\tilde{B}^2(M)\big)
	\]
	given by
	\[
		\mu_\phi(A)(f) = (\phi F_A \wedge \phi f)_{\tilde\Omega^2/\tilde{B}^2},		\hspace{1cm} f\in\Omega_{k+1}^0(M,\ad P)\cong\gcal_{k+1},
	\]
	is a moment map, and the reduced space $(\A_k)_0$ is $(\phi F)^{-1}(0)/\G_{k+1}$.
\end{theorem}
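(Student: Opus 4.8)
The plan is to replay the proof of Theorem \ref{thm:reduction_of_the_space_of_connections} almost verbatim, replacing the pairing $\langle\;\wedge\;\rangle$ by the scalar operation induced by $\phi$ and invoking Lemma \ref{lem:modified_connection_reduction_technical_lemma} wherever the product rule for $\langle\;\wedge\;\rangle$ was used. Since the preceding proposition already supplies the smoothness of $\omega_\phi$ and $\mu_\phi$, only two substantive points remain: verifying the moment-map identity and computing $\mu_\phi^{-1}(0)$.

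For the moment map, I would check the characterization of Lemma \ref{lem:vv_hamiltonian_system_facts}.ii, namely $\d(\mu_\phi)_A(\alpha)(f) = (\omega_\phi)_A(\alpha,\underline{f}_A)$, using the connectedness of $\G_{k+1}$. Expanding $\mu_\phi(A+\alpha)$ via the curvature variation $F_{A+\alpha} = F_A + \d_A\alpha + \tfrac{1}{2}[\alpha\wedge\alpha]$, the decisive simplification—absent in the unmodified setting—is that $\phi$ annihilates Lie brackets (established inside the proof of Lemma \ref{lem:modified_connection_reduction_technical_lemma}.ii), so $\phi[\alpha\wedge\alpha] = 0$ and $\mu_\phi$ is in fact affine in $\alpha$. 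This yields $\d(\mu_\phi)_A(\alpha)(f) = (\phi\,\d_A\alpha\wedge\phi f)_{\tilde\Omega^2/\tilde B^2}$ with no remainder term. Applying Lemma \ref{lem:modified_connection_reduction_technical_lemma}.ii together with the ordinary Leibniz rule gives $\d(\phi\alpha\wedge\phi f) = \phi\,\d_A\alpha\wedge\phi f - \phi\alpha\wedge\phi\,\d_A f$; passing to $\tilde\Omega^2/\tilde B^2$ kills the exact left-hand side, and substituting $\underline{f}_A = \d_A f$ produces $(\phi\,\d_A\alpha\wedge\phi f)_{\tilde\Omega^2/\tilde B^2} = (\phi\alpha\wedge\phi\,\underline{f}_A)_{\tilde\Omega^2/\tilde B^2} = (\omega_\phi)_A(\alpha,\underline{f}_A)$, as desired. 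Hence $\mu_\phi$ is a moment map and the action is Hamiltonian.

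For the zero level, I would prove $\mu_\phi^{-1}(0) = (\phi F)^{-1}(0)$. One inclusion is trivial, since $\phi F_A = 0$ forces $\mu_\phi(A) = 0$. For the converse, I observe that $\phi F_A\wedge\phi f = (\phi f)\,\phi F_A$ because $\phi f$ is a scalar function, and that the nonzero $\Ad$-invariant functional $\phi$ is fiberwise surjective, so $\phi f$ exhausts $C^\infty(M)$ as $f$ ranges over $\Omega_{k+1}^0(M,\ad P)$. Thus $\mu_\phi(A) = 0$ says that $g\cdot\phi F_A\in\tilde B^2(M)$ for every $g$; being exact, each $g\cdot\phi F_A$ is in particular closed, so $\d(g\cdot\phi F_A) = 0$ for all $g$, and Lemma \ref{lem:nondegeneracy_of_wedge_in_higher_than_three_dimensions}.ii (using $\dim M\geq 3$) forces $\phi F_A = 0$.

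I do not expect a serious obstacle, the argument being structurally identical to that of Theorem \ref{thm:reduction_of_the_space_of_connections}. The one place demanding care is the converse inclusion above, where one must notice both that the scalars $\phi f$ fill out $C^\infty(M)$ and that exactness may be weakened to closedness before Lemma \ref{lem:nondegeneracy_of_wedge_in_higher_than_three_dimensions}.ii applies. A second point worth flagging is that $\omega_\phi$ is genuinely degenerate (as $\phi$ has a kernel), so throughout, \emph{Hamiltonian} and \emph{moment map} should be read through the relation of Lemma \ref{lem:vv_hamiltonian_system_facts}.ii rather than through nondegeneracy; the reduction of this degenerate form is precisely the situation permitted by Theorem \ref{thm:linear_vv_symplectic_reduction}.
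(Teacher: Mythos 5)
Your proposal is correct and follows essentially the same route as the paper: verify the moment-map identity $\d(\mu_\phi)_A(\alpha)(f)=(\omega_\phi)_A(\alpha,\underline{f}_A)$ via Lemma \ref{lem:modified_connection_reduction_technical_lemma} and then identify $\mu_\phi^{-1}(0)$ with $(\phi F)^{-1}(0)$. The two refinements you add --- that $\phi$ kills the quadratic term $\tfrac{1}{2}[\alpha\wedge\alpha]$ so $\mu_\phi$ is affine (sidestepping the tangent-to-$0$ argument of Lemma \ref{lem:tangent_to_0}), and the explicit reduction of the zero-level computation to Lemma \ref{lem:nondegeneracy_of_wedge_in_higher_than_three_dimensions}.ii via scalars $\phi f$ exhausting $C^\infty(M)$ --- are both sound and merely make explicit steps the paper leaves implicit.
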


\begin{proof}
	Fix a connection $A\in\A_k$. For $f\in\Omega_{k+1}^0(M,\ad P)\cong\gcal_{k+1}$ and $\alpha\in\Omega_k^1(M,\ad P)\cong T_A\A_k$, Lemma \ref{lem:modified_connection_reduction_technical_lemma} implies that
	\begin{align*}
		\d(\mu_\phi)_A(\alpha)(f)
			&=	(\phi\d_A\alpha\wedge \phi f)_{\tilde\Omega^2/\tilde{B}^2}		\\
			&=	(\phi\alpha\wedge \phi\d_A f)_{\tilde\Omega^2/\tilde{B}^2}		\\
			&=	\omega(\alpha,\underline{f}_A),
	\end{align*}
	and thus $\mu_\phi$ is a moment map for the action of $\G_{k+1}$ on $\A_k$. Finally,
	\[
		\mu_\phi(A)=0 \iff \;\forall f\in\Omega_{k+1}^0(M,\ad P)\,:\,\phi F_A\wedge \phi f \in \tilde{B}^2(M)\;  \iff F_A \in\ker\phi,
	\]
	so that $\mu_\phi^{-1}(0)/\G_{k+1} = (\phi F)^{-1}(0)/\G_{k+1}$.
\end{proof}

Consider a complex manifold $M$ and a holomorphic vector bundle $E$ over $M$. Recall that the first Chern class $c_1(E)$ is represented by the form
	\[
		c_1(A)=\frac{-1}{2\pi\i}\:\tr\,F_A \in\Omega^2(M),
	\]
where $\tr$ denotes the complex trace of $F_A\in\Omega^2(M,\End_\C E)$, and where $A$ is any connection on the holomorphic frame bundle $PE$. We will call $c_1(A)$ the \emph{first Chern form} of $A$. If $A$ is the Chern connection of a Hermitian structure $h:E\otimes \overline{E}\to\C$, then $c_1(A)$ is proportional to the Ricci form $\rho(h)$ of $h$ \cite[Chapter IX]{KobayashiNomizu96a}. This motivates the following terminology.

\begin{definition}
	We call the connection $A\in \A(E)$ \emph{Ricci flat} if $c_1(A)=0$.
\end{definition}

\begin{corollary}
	Let $M$ be a compact complex manifold and let $E$ be a holomorphic vector bundle over $M$ with $c_1(E)=0$. In the sense of $W^{k,2}$ Sobolev completions, the moduli space of Ricci flat connections is the polysymplectic reduction of the space of connections $\A_k(E)$ equipped with the polysymplectic form $\omega_{\tr}$ and moment map given by $A\mapsto\tr\,F_A$, where $\tr$ represents the fiberwise complex trace of $F_A\in\Omega_{k-1}^2(M,\End\, TM^\C)$.
\end{corollary}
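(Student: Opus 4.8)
The plan is to obtain this corollary as a direct specialization of Theorem \ref{thm:modified_connection_reduction}, with the coadjoint-invariant functional $\phi$ taken to be a normalized trace. First I would fix a Hermitian metric on $E$, reducing the structure group of the frame bundle to the compact group $G=U(r)$, $r=\mathrm{rank}_\C E$, and let $PE$ denote the resulting unitary frame bundle, so that $\A_k(E)$ is the space of unitary connections and $F_A\in\Omega_{k-1}^2(M,\ad PE)$ with $\ad PE = PE\times_{\Ad}\mathfrak{u}(r)$. I would then set
\[
	\phi = \frac{-1}{2\pi\i}\,\tr : \mathfrak{u}(r)\to\R.
\]
Because $\tr(g\xi g^{-1})=\tr\xi$, the functional $\phi$ is $\Ad$-invariant, and the induced map $\phi:\Omega^2(M,\ad PE)\to\Omega^2(M)$ carries $F_A$ to the first Chern form $c_1(A)$. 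Thus the data $(G,PE,\phi)$ furnish exactly the input required by the constructions of Subsection \ref{subsec:characteristic_forms_and_Ricci_curvature}.

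The one point that requires comment is that Subsection \ref{subsec:characteristic_forms_and_Ricci_curvature} is phrased for $G$ \emph{semisimple}, whereas $U(r)$ is not. In fact the semisimple hypothesis is inessential and, if anything, vacuous for this purpose: an $\Ad$-invariant functional annihilates $[\g,\g]$, so on a semisimple algebra (where $[\g,\g]=\g$) it must vanish identically, and $\tr$ restricted to $\mathfrak{su}(r)$ is indeed zero. It is precisely the center $\i\R\,I\subseteq\mathfrak{u}(r)$ that supplies a nonzero invariant trace, so the non-semisimple group $U(r)$ is essential rather than incidental. I would note that the proofs of Lemma \ref{lem:modified_connection_reduction_technical_lemma} and Theorem \ref{thm:modified_connection_reduction} invoke semisimplicity nowhere; they rely only on the $\Ad$-invariance of $\phi$ (through the vanishing $\phi[\xi,\xi']=0$ of Lemma \ref{lem:modified_connection_reduction_technical_lemma}(ii)) and on the compactness of $G$ for the underlying Sobolev analysis. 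Both hold for $U(r)$, so Theorem \ref{thm:modified_connection_reduction} applies verbatim.

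With this in place, Theorem \ref{thm:modified_connection_reduction} gives that the $\G_{k+1}$-action on $(\A_k(E),\omega_{\tr})$ is Hamiltonian with moment map $\mu(A)(f)=(\phi F_A\wedge\phi f)_{\tilde\Omega^2/\tilde{B}^2}$ and reduced space $(\phi F)^{-1}(0)/\G_{k+1}$. Since $\phi F_A=c_1(A)$, the level set $(\phi F)^{-1}(0)$ is exactly the set of connections with vanishing first Chern form, i.e.\ the Ricci flat connections of the Definition, so the reduction is the moduli space of Ricci flat connections as claimed.

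Finally I would account for the hypothesis $c_1(E)=0$, which guarantees that this moduli space is nonempty. By Chern--Weil, $[\phi F_A]_{H^2}=c_1(E)$ for every $A$, so a Ricci flat connection can exist only if $c_1(E)$ vanishes in de Rham cohomology; conversely, when it does, one constructs a Ricci flat connection by a central twist. Starting from any $A_0$, write $c_1(A_0)=\d\beta$ and add the central form $\gamma=\tfrac{2\pi\i}{r}\,\beta\,I\in\Omega^1(M,\ad PE)$, valued in the center of $\mathfrak{u}(r)$. Centrality gives $[\gamma\wedge\gamma]=0$ and $\d_{A_0}\gamma=\d\gamma$, so $F_{A_0+\gamma}=F_{A_0}+\d\gamma$ and hence $c_1(A_0+\gamma)=c_1(A_0)-\d\beta=0$. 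This step again exploits the nontrivial center of $\mathfrak{u}(r)$. The main difficulty here is therefore conceptual rather than computational: one must recognize that Theorem \ref{thm:modified_connection_reduction} is in truth a statement about compact groups equipped with a distinguished invariant functional, and that the trace on $\mathfrak{u}(r)$—together with $c_1(E)=0$—supplies precisely that data; no new analytic estimate is needed.
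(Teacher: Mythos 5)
Your proposal is correct and takes the same route as the paper, whose entire proof is the single line ``Apply Theorem \ref{thm:modified_connection_reduction} with $\phi=\tr$.'' Your additional observations --- that the standing semisimplicity hypothesis of Subsection \ref{subsec:characteristic_forms_and_Ricci_curvature} is never invoked and would in fact force any $\Ad$-invariant linear $\phi$ to vanish, so that the compact but non-semisimple group $U(r)$ obtained by fixing a Hermitian metric is exactly what makes the statement non-vacuous, and that $c_1(E)=0$ guarantees nonemptiness of the zero level via a central twist --- are correct and go beyond what the paper records.
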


\begin{proof}
	Apply Theorem \ref{thm:modified_connection_reduction} with $\phi=\tr$.
\end{proof}

\begin{remark}
	Consider the map $f:\mathrm{Met}(E)\to \A(PE)$ from the space of Hermitian structure on $E$ to the space of connections on $PE$, the frame bundle of $E$, which sends a Hermitian structure $h$ to its Chern connection $f(h)$. Then $f$ is equivariant under the action of the gauge group, $f^*\omega_\tr$ is an $\Omega^2(M)/B^2(M)$-valued $2$-form on $\mathrm{Met}(E)$, and the polysymplectic reduction of $\big(\mathrm{Met}(E),f^*\omega_\tr\big)$ with respect to the moment map $f^*\mu_\tr$ is the moduli space of Ricci flat Hermitian structures on $E$.
	
	In the case that $E=TM^\C$ is the complexified tangent bundle, then the reduced space is the moduli space of Ricci flat K\"ahler metrics on $M$.
\end{remark}

\begin{remark}
	It is significant in the preceding material that $\tr$ denotes the complex trace. Indeed, the argument cannot be adapted to Riemannian structures as $\tr_\R F_A=0$ for any metric connection $A$.
\end{remark}

\appendix
\section{The Infinite-Dimensional Setting}

The purpose of this appendix is to supply some general remarks on the infinite-dimensional setting for polysymplectic geometry.

Whereas the notion of a smooth space is unambiguous in finite dimensions, there are multiple inequivalent formalisms in the infinite-dimensional setting. In this paper, we have chosen to follow the traditional route and work with Hilbert and Banach manifolds. However, it is worth noting that in many situations this approach is unavailable or unsuitable \cite{KrieglMichor97}. In this appendix, we consider the modern approach of employing \emph{convenient vector spaces} as local models for infinite-dimensional manifolds. This theory was introduced in \cite{FrolicherKriegl88} and is thoroughly detailed in \cite{KrieglMichor97}. We choose this framework here for its flexibility as well as its potential for future developments, though at the same time we remark that many of the following observations remain valid within other formalisms as well.

Generally speaking, the transition from finite to infinite-dimension geometry is not straightforward, and a naive approach based on formal analogy is liable to errors. For example, the equivalence between the space of derivations of germs of smooth functions and the space of first-order equivalence classes of smooth curves at a point in the definition of the tangent space in finite dimensions \cite[Section I.1]{KobayashiNomizu96a} does not obtain in infinite dimensions \cite[Section 28.1]{KrieglMichor97}. In this case we distinguish between an \emph{operational tangent space} of bounded derivations and a \emph{kinematic tangent space} of derivations induced along smooth curves.

Many well-known properties of symplectic manifolds are not preserved by the transition to infinite dimensions. This is predominantly a consequence of the fact that in infinite dimensions the induced map
\begin{align*}
	\iota\hspace{.5pt}\omega: 	TM	&\to			T^*M					\\
								X	&\mapsto		\iota_X\omega
\end{align*}
is guaranteed only to be an inclusion of vector bundles, whereas in finite dimensions it is an isomorphism. It is interesting to observe that this particular loss of structure is precisely that responsible for the weakening of results in the finite-dimensional polysymplectic setting. Likewise, in both the infinite-dimensional symplectic and the finite-dimensional polysymplectic settings, we find that not every smooth function is Hamiltonian and that the double symplectic orthogonal does not fix subspaces. Insofar as the polysymplectic formalism does not enjoy these properties and their consequences in the finite dimensional context, their absence in infinite dimensions cannot be said to constitute a loss.

Let us briefly review the underlying ideas of the convenient vector space approach to manifold theory. Unlike the theories of Hilbert, Banach, and Fr\'echet spaces, the theory of convenient vector spaces is grounded not in the framework of \emph{topological vector spaces} but in the construction of a \emph{bornology}, that is, a collection of bounded sets. This bornology is used to identify the space of smooth curves $C^\infty(\R,U)$. We say that a locally convex vector space $U$ is a \emph{convenient vector space} if every smooth curve $c\in C^\infty(\R,U)$ possesses an antiderivative $C\in C^\infty(\R,U)$ \cite[Theorem 2.41]{KrieglMichor97}. Here, as regards differentiation, we have in mind the limit of the familiar difference quotient. The spaces of smooth curves also suffice to determine the smoothness of maps. In particular, a map of convenient vector spaces $f:U\to U'$ is \emph{smooth} precisely when it preserves the space of smooth curves. The associated manifold theory proceeds in the natural way, with local transition functions required to be smooth maps of convenient vector spaces. The reader is referred to \cite{KrieglMichor97} for a very readable introduction.

As all the relevant geometric structures from the finite dimensional setting appear in the infinite-dimensional formalism of convenient manifolds, our $V$-symplectic definitions require no modification. Thus let $M$ be a smooth manifold modeled on a convenient vector space $U$, and consider a $V$-symplectic structure $\omega\in\Omega^2(M,V)$, for some convenient vector space $V$. The main result for our purposes is the following.

\begin{theorem}
	The polysymplectic gradient $s\text{-}\mathrm{grad}:C_H^\infty(M,V)\to\X(M)$ and the bracket $\{\,,\}:C_H^\infty(M,V)\times C_H^\infty(M,V)$ are well-defined. Moreover, the bracket $\{\,,\}$ defines a Lie algebra structure on $C_H(M,V)$.
\end{theorem}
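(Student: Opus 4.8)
The plan is to transcribe the finite-dimensional arguments essentially verbatim, the whole point being that each manipulation reduces to an identity of Cartan calculus that remains valid for differential forms on manifolds modeled on convenient vector spaces \cite{KrieglMichor97}. I would first dispense with the polysymplectic gradient. For $f\in C_H^\infty(M,V)$ a Hamiltonian vector field exists by the very definition of $C_H^\infty(M,V)$, so the only issue is uniqueness: if $-\iota_X\omega=\d f=-\iota_{X'}\omega$, then $\iota_{X-X'}\omega=0$ and hence $X=X'$ by nondegeneracy of $\omega$. This step is untouched by the passage to infinite dimensions precisely because nondegeneracy is the assertion that $\iota\,\omega$ is injective on the kinematic tangent spaces, a condition that persists even though $\iota\,\omega$ is now merely an inclusion of bundles rather than an isomorphism. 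Uniqueness at once gives linearity of $s\text{-}\mathrm{grad}$, since $X_f+X_{f'}$ is the Hamiltonian vector field of $f+f'$.

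Next I would establish that the bracket takes values in $C_H^\infty(M,V)$, which is the substantive well-definedness claim. Since each $X_f$ is Hamiltonian, Cartan's formula together with $\d\omega=0$ gives $\L_{X_f}\omega=\d\iota_{X_f}\omega+\iota_{X_f}\d\omega=0$; the $X_f$ are therefore polysymplectic. Using the commutator identity $\iota_{[X_f,X_{f'}]}=\L_{X_f}\iota_{X_{f'}}-\iota_{X_{f'}}\L_{X_f}$, the vanishing $\L_{X_f}\omega=\L_{X_{f'}}\omega=0$, and $\d\omega=0$, one computes
\[
	-\iota_{[X_f,X_{f'}]}\,\omega=-\L_{X_f}\iota_{X_{f'}}\omega=-\d\,\iota_{X_f}\iota_{X_{f'}}\omega=-\d\{f,f'\}.
\]
Hence $-\iota_{-[X_f,X_{f'}]}\omega=\d\{f,f'\}$, so $\{f,f'\}$ is Hamiltonian with $s\text{-}\mathrm{grad}\{f,f'\}=-[X_f,X_{f'}]$. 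This simultaneously records that $s\text{-}\mathrm{grad}$ is a Lie algebra antihomomorphism, a fact I will reuse.

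For the Lie algebra axioms, bilinearity and antisymmetry are immediate from the bilinearity and skew-symmetry of $\omega$ and the linearity of $s\text{-}\mathrm{grad}$. The Jacobi identity I would obtain not by applying $s\text{-}\mathrm{grad}$ and invoking injectivity—which would only show the Jacobiator is locally constant—but from the exact identity
\[
	\{\{f,g\},h\}+\{\{g,h\},f\}+\{\{h,f\},g\}=\tfrac{1}{2}\,\d\omega(X_f,X_g,X_h),
\]
whose right-hand side vanishes because $\omega$ is closed. This is verified by expanding $\d\omega(X_f,X_g,X_h)$ through the global formula for the exterior derivative of a $2$-form, then rewriting each $\omega(X_a,X_b)$ as $-\{a,b\}$ and each $[X_a,X_b]$ as $-X_{\{a,b\}}$ via the antihomomorphism property; the six resulting terms collapse to twice the Jacobiator.

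The hard part is not the algebra, which is formally identical to the finite-dimensional case, but the justification that the Cartan-calculus identities invoked above—Cartan's magic formula, the commutator formula for $\iota_{[X,Y]}$, and the global formula for $\d$—genuinely hold for forms on a convenient manifold, and that $[X_f,X_{f'}]$ is again a \emph{kinematic} vector field to which these formulas apply. I would resolve this by appealing to the development of differential forms and the kinematic tangent bundle in \cite{KrieglMichor97}, observing that the operational/kinematic distinction is immaterial here because every vector field in sight is an honest kinematic field (Hamiltonian, or a bracket of such) and $\omega$ is by hypothesis a kinematic $V$-valued form.
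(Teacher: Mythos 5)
Your proposal is correct and is essentially the paper's proof: the paper disposes of this theorem by deferring to \cite{KrieglMichor97}, Theorem 48.8, whose argument is precisely the Cartan-calculus computation you carry out (uniqueness of $X_f$ from nondegeneracy, $X_{\{f,f'\}}=-[X_f,X_{f'}]$ via $\iota_{[X,Y]}=[\mathcal{L}_X,\iota_Y]$ and $\mathcal{L}_{X_f}\omega=0$, and Jacobi from $\d\omega=0$), together with the observation that all of this calculus is available for kinematic vector fields and $V$-valued forms on convenient manifolds. The only quibble is that with the paper's sign conventions the Jacobiator works out to $-\tfrac{1}{2}\,\d\omega(X_f,X_g,X_h)$ rather than $+\tfrac{1}{2}$, which is immaterial since the right-hand side vanishes.
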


\begin{proof}
	The proof of each statement is precisely analogous to that of the corresponding assertion of \cite[Theorem 48.8]{KrieglMichor97}. Note that some of the conventions in their proof differ from our own by a factor of $-1$.
\end{proof}

The importance of this result lies in its provision of those structures with respect to which we may define a comoment map, and, by extension, a $V$-Hamiltonian system. Within the the convenient formalism, the suitable definition of a Lie group $G$ is given in \cite[Chaper VIII]{KrieglMichor97}. While a detailed analysis of the general situation is beyond the scope of this paper, we remark that an argument similar to the proof of \cite[Theorem 49.16 (4),(5)]{KrieglMichor97} implies that a vector-valued Hamiltonian reduction theorem does obtain in the convenient formalism when $G$ is finite dimensional.

\bibliography{Blacker}
\bibliographystyle{abbrv}

\par
\medskip
\begin{tabular}{@{}l@{}}
	\textsc{Department of Mathematics, East China Normal University,} \\
	\textsc{500 Dongchuan Road, Shanghai, 200241 P.R.\ China} \\[1.5pt]
	\textit{E-mail address}: \texttt{cblacker@math.ecnu.edu.cn}
\end{tabular}

\end{document}